\newtheorem{theorem}{Theorem}[section]
\newtheorem{lemma}[theorem]{Lemma}
\newtheorem{assumption}{Assumption}
\newtheorem{proposition}[theorem]{Proposition}
\newtheorem{definition}[theorem]{Definition}
\newenvironment{customthm}[1]
{\innercustomthm}
{\endinnercustomthm}
\DeclareMathOperator{\cone}{cone}
\newcommand{\N}{\mathbb{N}}
\newcommand{\Q}{\mathbb{Q}}
\newcommand{\R}{\mathbb{R}}
\newcommand{\D}{\mathbb{D}}
\newcommand{\X}{\mathcal{X}}
\newcommand{\K}{\mathcal{K}}
\newcommand{\F}{\mathcal{F}}
\newcommand{\T}{\mathcal{T}}
\newcommand{\I}{\mathcal{I}}
\newcommand{\J}{\mathcal{J}}
\newcommand{\Ll}{\mathcal{L}}
\newcommand{\Dd}{\mathcal{D}}
\newcommand{\bvec}[1]{{\bf{#1}}}
\newcommand{\vecx}{{\bf{x}}}
\newcommand{\barx}{{\bar{\vecx}}}
\newcommand{\vecs}{{\bf{s}}}
\newcommand{\vecy}{{\bf{y}}}
\newcommand{\vecu}{{\bf{u}}}
\newcommand{\vecd}{{\bf{d}}}
\newcommand{\mA}{{\bf{A}}}
\newcommand{\abs}[1]{|#1|}
\newcommand{\set}[1]{\{#1\}}
\newcommand{\eps}{\eps}
\newcommand{\Disp}{\displaystyle}
\definecolor{blue}{rgb}{0.00,0.00,1.00}
\definecolor{green}{rgb}{0.059, 0.78, 0.125}
\definecolor{purple}{rgb}{0.49,0.18,0.56}
\definecolor{pink}{rgb}{1,0.75,0.796}
\newcommand{\Gint}{\mathcal{G}^{\tt int}}
\newcommand{\Omegaint}{\Omega^{\tt int}}
\newcommand{\intOmegaint}{\mathring\Omega^{\tt int}}
\newcommand{\Gext}{\mathcal{G}^{\tt ext}}
\newcommand{\Omegaext}{\Omega^{\tt ext}}
\newcommand{\gmink}{g^{\min}_k}
\begin{document}
\title{Nonlinear Derivative-free Constrained Optimization with a Penalty-Interior Point Method and Direct Search}
\author{\small Andrea Brilli \thanks{``Sapienza'' University of Rome, Department of Computer Control and Management Engineering ``A. Ruberti'', Rome, Italy \texttt{(brilli@diag.uniroma1.it)}} \and \small Ana L. Custódio \thanks{NOVA School of Science and Technology, Center for Mathematics and Applications (NOVA MATH), Campus de Caparica, 2829-516 Caparica, Portugal \texttt{(alcustodio@fct.unl.pt)}. This work was funded by national funds through FCT - Fundação para a Ciência e a Tecnologia I.P., under the scope of projects UIDP/00297/2020 and UIDB/00297/2020 (Center for Mathematics and Applications).} \and \small Giampaolo Liuzzi\thanks{``Sapienza'' University of Rome, Department of Computer Control and Management Engineering ``A. Ruberti'', Rome, Italy \texttt{liuzzi@diag.uniroma1.it}} \and \small Everton J. Silva\thanks{NOVA School of Science and Technology, Center for Mathematics and Applications (NOVA MATH), Campus de Caparica, 2829-516 Caparica, \texttt{(ejo.silva@campus.fct.unl.pt)}.  This work was funded by national funds through FCT - Fundação para a Ciência e a Tecnologia I.P., under the scope of projects UI/BD/151246/2021, UIDP/00297/2020, and UIDB/00297/2020 (Center for Mathematics and Applications).}}
\date{}
\maketitle

%%==================================%%
%% Sample for unstructured abstract %%
%%==================================%%

\begin{abstract}
In this work, the joint use of a mixed penalty-interior point method and direct search is proposed, to address {nonlinear} constrained derivative-free optimization problems. A  merit function is considered, wherein the set of nonlinear inequality constraints is divided into two groups: one treated with a logarithmic barrier approach, and another, along with the equality constraints, addressed using a penalization term. This strategy is adapted and incorporated into a direct search method, enabling the effective handling of general nonlinear constraints. Convergence to KKT-stationary points is established under continuous differentiability assumptions, without requiring any kind of convexity. Computational experiments on analytical problems and an engineering application demonstrate the robustness, efficiency, and overall effectiveness of the proposed method, when compared with state-of-the-art solvers.
\end{abstract}

{\bf Keywords:} Derivative-free optimization, Constrained optimization, Interior point methods, Direct search. \\[5pt]
{\bf AMS Classification:} 65K05, 90C30, 90C56.

\maketitle

\section{Introduction}
In this work, the following derivative-free optimization problem with general constraints (linear and nonlinear) is considered:
\begin{equation}
\begin{array}{l}
\min\ f(\vecx)\\
\text{s.t.}\ \ g_\ell(\vecx)\leq 0 \quad \ell=1,\dots,m,\\
\hspace{0.7cm} h_j(\vecx) = 0 \quad j=1,\dots,p,\\
\hspace{0.7cm} \vecx \in \X,\\
\end{array}
\label{P}
\tag{P}
\end{equation}
where $f:\X\to\R$, $g_\ell:\X\to\R$, $\ell=1,\dots,m$, $h_j:\X\to\R$, $j=1,\dots,p$ are continuously differentiable in $\X$, and $\X=\set{\vecx\in\R^n\mid \mA\vecx\leq \bvec{b}}$, with $\ \mA\in\R^{q\times n}$ and $\bvec{b}\in \R^q$. {Also}, it is assumed that the derivatives of the functions $f$, $g_\ell$, and $h_j$ can be neither calculated nor explicitly approximated. This is a common setting in a context of simulation-based optimization, where function evaluations are computed through complex and expensive computational simulations~\cite{Audet_Hare_2017,Conn_Scheinberg_Vicente_2009}.

Constrained derivative-free optimization is not new. In the context of pattern search methods, the early works are  co-authored by Lewis and Torczon, first considering bound~\cite{Lewis_Torczon_1999} or {linear} constrained problems~\cite{Lewis_Torczon_2000}, and after for general nonlinear constraints~\cite{Lewis_Torczon_2002}. When the constraints are only linear, inspired by the work of May~\cite{May_1974}, procedures were developed that allow to conform the directions to be used by the algorithms to the geometry of the feasible region imposed by the nearby constraints~\cite{Kolda_Lewis_Torczon_2007,Lewis_Torczon_2000,Lucidi_Sciandrone_Tseng_2002}, including specific strategies to address the degenerate case~\cite{Abramson_Brezhneva_DennisJr_Pingel_2008}. In the presence of nonlinear constraints, augmented Lagrangian approaches have been proposed~\cite{Lewis_Torczon_2002}, reducing the problem solution to a sequence of bound constrained minimizations of an augmented Lagrangian function. A numerical study that also employs an augmented Lagrangian approach is given in~\cite{Gramacy_Gray_LeDigabel_Lee_Ranjan_Wells_Wild_2016}.

In the original presentation of Mesh Adaptive Direct Search (MADS)~\cite{Audet_DennisJr_2006}, a generalization of pattern search methods, constraints were addressed with an extreme barrier approach, only evaluating feasible points. If this saves in function evaluations, a very relevant feature for the target problem class, it does not take advantage on the local information obtained about the feasible region. Following the filter approaches proposed for derivative-based optimization~\cite{Fletcher_Leyffer_2002} and already explored in pattern search methods~\cite{Audet_DennisJr_2004}, linear and nonlinear inequalities started to be treated in MADS with the progressive barrier technique~\cite{Audet_DennisJr_2009}. Later, the approach was extended to linear equality constraints~\cite{Audet_LeDigabel_Peyrega_2015}, by reformulating the optimization problem, thereby reducing the number of original variables.

For directional direct search methods using linesearch, other approaches have been taken to address general nonlinear constraints. These include nonsmooth exact penalty functions~\cite{Liuzzi_Lucidi_2009}, where the original {nonlinear} constrained problem is converted into the unconstrained or {linear} constrained minimization of a nonsmooth exact penalty function. To overcome the limitations associated to this approach, in~\cite{Liuzzi_Lucidi_Sciandrone_2010} a sequential penalty approach has been studied, based on the smoothing of the nondifferentiable exact penalty function, including a well-defined strategy for updating the penalty parameter.  Recently, in the same algorithmic framework, Brilli \emph{et al.}~\cite{Brilli_Liuzzi_Lucidi_2021} proposed the use of a merit function that handles inequality constraints by means of a logarithmic barrier approach and equality constraints by considering a penalization term. This approach allows an easy management of relaxable and unrelaxable constraints and avoids the abrupt discontinuities introduced by the extreme barrier approach.

\subsection{Our contribution}
In this work, the strategy of~\cite{Brilli_Liuzzi_Lucidi_2021} is adapted and incorporated into generalized pattern search (GPS). Departing from SID-PSM~\cite{Custodio_Rocha_Vicente_2010,Custodio_Vicente_2007}, an implementation of a GPS method that uses polynomial models both at the search and the poll steps to improve the numerical performance of the code, LOG-DS is developed, a direct search method able to explicitly address nonlinear constraints by a mixed penalty-logarithmic barrier approach. Section~\ref{sid_psm_log} proposes the algorithmic structure. Convergence is established in Section~\ref{convergence_analysis}. Details of the numerical implementation are provided in Section~\ref{sec:implementation} and numerical results are reported in Section~\ref{numerical_experiments}, comparing the numerical performance of LOG-DS with state-of-the-art solvers. Section~\ref{conclusions} summarizes our conclusions. An appendix completes the paper, including some auxiliary technical results.

\subsection{Notation}
Throughout this paper, vectors will be written in lowercase boldface (e.g., ${\bf{v}} \in \R^n$, $n \geq 2$) while matrices will be written in uppercase boldface (e.g., ${\bf{S}}\in \R^{n\times p}$). The set of column vectors of a matrix $\bf{D}$ will be denoted by $\D$, and more generally sets such as $\N, \Q, \R$ will be denoted by blackboard letters. The set of nonnegative real numbers will be denoted by $\R_+$. Sequences indexed in $\N$ will be denoted by $\{a_k\}_{k\in\N}$ or $\{a_k\}$ in absence of ambiguity. Given a point $\vecx\in\R^n$ and a set $\Omega \subset \R^n$, the notation $\T_{\Omega}(\bf{x})$ is used to denote the tangent cone to $\Omega$ computed at $\vecx$, $\mathring{\Omega}$ to denote the interior of $\Omega$, and $\partial \Omega$ to represent its boundary.

\par\smallskip

\section{A direct search algorithm for nonlinear constrained optimization}
\label{sid_psm_log}

This section is devoted to the introduction of a new directional direct search algorithm, based on the sequential minimization of an adequate merit function, to solve {nonlinear} constrained derivative-free optimization problems. The first part of the section details the proposed algorithmic structure. The second part analyzes the role of linear constraints and their relationship with the choice of directions to be explored in the poll step. Finally, the main assumptions required for the theoretical analysis are stated and some preliminary results are established.

\subsection{LOG-DS algorithm}
The strategy proposed to solve Problem~\eqref{P} builds upon the original idea of~\cite{Brilli_Liuzzi_Lucidi_2021}, where a merit function is defined which guarantees strict feasibility of general inequality constraints by means of logarithmic barrier penalty terms. Equality constraints are treated by exterior penalty terms. The weakness of such an approach is that one needs an initial guess which strictly satisfies all the inequality constraints, {so that the merit function is finite}. In the present work, this requirement is used to partition the original set of nonlinear inequalities into two subsets, such that feasibility is always guaranteed for the initialization. To this aim, given an initial point $\vecx_0$, the index set of the nonlinear inequality constraints, namely $\{1,2,\dots,m\}$, is partitioned into the following two index sets:
\[
\begin{split}
\Gint & = \{\ell\in\{1,\ldots,m\}\mid g_\ell(\vecx_0) < 0\},\\
\Gext & = \{\ell\in\{1,\ldots,m\}\mid g_\ell(\vecx_0) \geq 0\}.
\end{split}
\]
Note that $\Gint$ and $\Gext$ depend only on the initial point $\vecx_0$ and are indeed a partition of the index set of nonlinear inequality constraints, i.e. $\Gint\cup\Gext=\set{1,\dots,m}$ and $\Gint\cap\Gext=\emptyset$. 

The inequalities $g_\ell(\vecx)\leq 0$ with $\ell\in\Gint$ will be treated with a logarithmic barrier approach, whereas the ones with $\ell\in\Gext$ will be aggregated into an exterior penalization term, along with the equality constraints. The constraints associated with $\Gext$ as well as the equality constraints can be regarded as relaxable, and are only required to be satisfied at the problem solution.  %In order to emphasize the different roles played by the two terms of~\eqref{merit_function} relative to the constraint functions, we define the two following sets
Using the index sets $\Gint$ and $\Gext$, one can define:
\begin{eqnarray*}
\Omegaint &=& \set{\vecx\in\R^n\mid g_\ell(\vecx)\leq 0,\ \ell\in\Gint},\\
\Omegaext &=& \{\vecx\in\R^n\mid g_\ell(\vecx)\leq 0,\
\ell\in\Gext \text{ and} \ h_j(\vecx)=0,\ j=1,\dots, p \}.
\end{eqnarray*}
Thus, the feasible region $\F$ of Problem~\eqref{P} is given by $\F = \X\cap \Omegaint\cap \Omegaext.$ Note that, when  all inequality constraints are either violated or active at the initial guess, then $\Gint=\emptyset$ and $\Gext=\set{1,\dots,m}$. In this case, $\Omegaint = \intOmegaint = \R^n$. However, for the sake of clarity, it is assumed that $\Gint\neq\emptyset$, i.e., at least one constraint is strictly satisfied. 

The merit function proposed in this work is defined as:
\begin{equation}
Z(\vecx;\rho) =
\begin{cases}
\begin{array}{l}
f(\vecx) - \rho\sum_{\ell\in \Gint} \log(-g_\ell(\vecx))  \\
\phantom{f(\vecx)} + \frac{1}{\rho^{\nu-1}} \sum_{\ell\in \Gext}\left(\max\{g_\ell(\vecx),0\}\right)^{\nu}\\
\phantom{f(\vecx)}+ \frac{1}{\rho^{\nu-1}}\sum_{j=1}^{p} |h_j(\vecx)|^\nu
\end{array} & \text{if}\ \vecx\in\X\ \text{and}\ g_\ell(\vecx)<0, \ell\in\Gint\\
+\infty & \text{otherwise,}
\end{cases}
\label{merit_function}
\end{equation}
where $\rho>0$ is a \textit{penalty-barrier} parameter and $\nu\in(1,2]$ plays the role of a \textit{smoothing exponent}. Note that $\rho$ is the same for the different terms of the merit function and the range of $\nu$  results from theoretical requirements (see Theorem~\ref{theo:bounded_multipliers}).

%Note that, $Z(\vecx;\rho)$ is defined to be $+\infty$ for all $\vecx\in \X$ such that %%$g_\ell(\vecx)\geq 0$ for at least one $\ell\in\Gint$
%$x\not\in\intOmegaint$, and for all points $x\not\in\X$. %%$\vecx\in\intOmegaint$ such that $g_\ell(\vecx)=0$ for some $\ell\in\Gint$.

Regarding the linear constraints, they are addressed directly during the minimization of $Z(\cdot;\rho)$. Therefore, given a penalty-barrier-parameter $\rho_k$, the following problem is considered
\begin{equation}
\begin{array}{l}
\min\ Z(\vecx;\rho_k)\\
\text{s.t.}\ \vecx\in \X.\\
\end{array}
\label{P2}
\tag{\mbox{$P_{\rho_k}$}}
\end{equation}
Following the classic approach of penalty and interior point methods, the proposed strategy generates a monotone sequence of penalty-barrier parameters $\set{\rho_k}$, and sequentially solves the related linearly constrained subproblems, aiming to obtain the solution of the original Problem~\eqref{P}. Note that, in order for that to happen, one needs the sequence $\set{\rho_k}$ to converge to zero, so that, in the limit, the merit function $Z(\cdot;\rho_k)$ {coincides} with the objective function $f(\cdot)$ among feasible points. 

Taking into account the black-box nature of the functions defining the problem, a directional direct search approach is proposed to solve the subproblems~\eqref{P2}. In particular, as mentioned above, an adaptation of the SID-PSM algorithm is considered, i.e., an implementation of GPS. In the literature, GPS algorithms, when applied to the minimization of a function $\varphi:\R^n\to\R$, accept new points whenever a \textit{simple decrease} of $\varphi(\cdot)$ is obtained, that is, given a current solution $\vecx_k$ and a trial point $\vecy$, whenever $\varphi(\vecy)<\varphi(\vecx_k)$ holds. This forces the method to rely on an implicit mesh to ensure the existence of limit points of the sequence of iterates $\set{\vecx_k}$. In the proposed approach, instead, the acceptance of new points relies on the notion of \textit{sufficient decrease} \cite{Deleone_Gaudioso_Grippo_1984}. The next definition adjusts the concept of \textit{forcing function} (see~\cite{Kolda_Lewis_Torczon_2003}) and is used to define the sufficient decrease condition required to accept new points. 

\begin{definition}
\label{def:forcing_function}
Let $\xi: [0,+\infty)\to [0,+\infty)$ be a continuous and nondecreasing function. The function $\xi$ is said to be a forcing function if $\xi(t)/t\to 0$ as $t\downarrow 0$ and $\xi(t)\to 0$ implies $t\to 0$.
\end{definition}
A classic example of a forcing function is $\xi(t)=\gamma t^2$, with $\gamma>0$, which is also the most commonly used in practice.\\

\noindent The proposed algorithmic structure is detailed below.

\noindent\framebox[\textwidth]{\parbox{0.95\textwidth}{
\par
\begin{customthm}{LOG-DS}\label{LOG-DS}
%Penalty-Interior Point Method with Direct Search
\end{customthm}
\par\medskip

{\bf Data.} {$\vecx_0\in \X$, $\Gint\neq\emptyset$ and $\Gext$ such that $\vecx_0\in\intOmegaint$}, ${\cal D}$ a set of sets of normalized directions, a initial step-size $\alpha_0 > 0$, a initial penalty-barrier parameter $\rho_0>0$, a smoothing exponent $\nu\in(1,2]$, step-size and penalty-barrier contraction parameters $\theta_{\alpha},\theta_{\rho}\in (0,1)$, a step-size expansion parameter $\phi\geq 1$, and $\beta>1$. %Set $k:=0$.

\par\medskip

{\bf For} $k = 0,1,2,\dots$ {\bf do}
\begin{itemize}
\item[] {\bf Step 1. (Search Step, optional)}\smallskip\\
\hspace*{1.cm} {\bf If} %{$\exists\ \vecs_k \in \X\ : \ Z(\vecs_k;\rho_k)\leq Z(\vecx_k;\rho_k)-\xi(\alpha_k)$}\\
$\vecs_k \in \X$ can be computed such that $Z(\vecs_k;\rho_k)\leq Z(\vecx_k;\rho_k)-\xi(\alpha_k)$
\hspace*{1.5cm} set $\vecx_{k+1}=\vecs_k$, $\alpha_{k+1} =\phi \alpha_k$,  $\rho_{k+1} =\rho_k$, declare the iteration\\
\hspace*{1.5cm} {\bf Successful}, {and skip {\bf Steps 2.} and {\bf 3.}}\\% set $k=k+1$, and repeat {\bf Step 1.}\\
%\hspace*{1.cm} {\bf Else} go to \textbf{Step 2.}

\item[] {\bf Step 2. (Poll Step)}\smallskip \\
\hspace*{1.cm} Select a set of normalized directions $\Dd_k\in {\cal D}$ \\
\hspace*{1.cm} {\bf If} $\exists\ \vecd_k^i\in \Dd_k$: $\vecx_k+\alpha_k \vecd_k^i\in \X$ and $Z(\vecx_k+\alpha_k \vecd_k^i;\rho_k)\le Z(\vecx_k;\rho_k)-\xi(\alpha_k)$ \\
\hspace*{1.5cm} set $\vecx_{k+1} = \vecx_k + \alpha_k \vecd_k^i$, $\alpha_{k+1}=\phi \alpha_k$,  $\rho_{k+1} =\rho_k$, declare the iteration\\ 
\hspace*{1.5cm} {\bf Successful}, {and skip {\bf Step 3.}}\\%and   set $k=k+1$, and go to {\bf Step 1.}\\
\hspace*{1.cm} {\bf Else} set $\vecx_{k+1} = \vecx_k$, $\alpha_{k+1} = \theta_{\alpha} \alpha_k$, {and} declare the iteration\\\hspace*{1.5cm}  {\bf Unsuccessful}.\\
%\hspace*{1.5cm} and go to \textbf{Step 3.}

\item[] {\bf Step 3. (Penalty-Barrier Update Step)}\medskip \\
\hspace*{1.cm} Set $\gmink = \Disp\min_{\ell\in\Gint}\{|g_\ell(\vecx_{k})|\}$\smallskip \\
\hspace*{1.cm} {\bf If} $\alpha_{k+1} \le\min\{
\rho_k^{\beta},(\gmink)^2\}$
{\bf then} set $\rho_{k+1} =\theta_{\rho}\rho_k$.\\
\hspace*{1.cm} {\bf Else} set $\rho_{k+1} = \rho_k$.
\par\medskip
\end{itemize}
{\bf EndFor}
\par
\noindent }}
\par\bigskip

In the above scheme, {the partition sets $\Gint$ and $\Gext$ are chosen so that the initial guess $\vecx_0$ satisfies $\vecx_0\in\X\cap\intOmegaint$, i.e. $\vecx_0\in\X$ and %, the two index sets of inequality constraints previously introduced, i.e. $\Gint$ and $\Gext$, are defined so that
$g_\ell(\vecx_0)<0$ for all $\ell\in\Gint$.} This ensures that the initial value of the merit function $Z(\vecx_0;\rho_0)$ is finite. {Note that, when $\Gint=\emptyset$, the method becomes equivalent to a sequential penalty approach, for which a theoretical analysis can be found in~\cite{Liuzzi_Lucidi_Sciandrone_2010} for linesearch-type algorithms. For the sake of brevity we omit the analysis of this case. Nevertheless, we highlight that only minor adjustments would be required}. Finally, note that $\Gint$ and $\Gext$ are determined during the initialization phase and are not modified as the algorithm progresses. The method iterates over three steps. The first two follow the general structure proposed by Audet and Dennis~\cite{Audet_DennisJr_2003} for GPS, whereas the third is related to the novel penalty-interior point approach.

\textbf{Step 1}, the \textit{search step}, allowing the use of heuristics, is very flexible, not even requiring the projection of the generated points in some type of implicit mesh, since a sufficient decrease condition is used for the acceptance of new iterates. As it is detailed in Section~\ref{sec:implementation},  the original SID-PSM algorithm uses quadratic polynomial models, which are minimized to generate new trial points. This approach is adapted into LOG-DS as described in Subsection~\ref{subsec:implementation_details}, but since it is not relevant for establishing convergence properties, for now it is omitted. If the search step produces a trial point $\vecs_k\in\X$ that provides a sufficient decrease in the merit function with respect to the current solution $\vecx_k$,  i.e., $Z(\vecs_k;\rho_k)\leq Z(\vecx_k;\rho_k)-\xi(\alpha_k)$, then the algorithm sets $\vecx_{k+1}=\vecs_k$ and proceeds with iteration $k+1$. Otherwise, the poll step is invoked.

\textbf{Step 2}, the \textit{poll step} initializes with the selection of a set of normalized \textit{poll directions} $\Dd_k$. Further considerations on the choice of $\Dd_k$ are discussed in Subsection~\ref{sec:poll_linear}. If a direction $\vecd_k^i\in\Dd_k$ is found such that the trial point $\vecx_k+\alpha_k\vecd_k^i$ belongs to $\X$ and achieves a sufficient decrease in the merit function, then the algorithm sets $\vecx_{k+1}=\vecx_{k}+\alpha_k\vecd_k^i$ and proceeds with iteration $k+1$.

Note that whenever $\vecx_{k+1}\neq\vecx_k$ one has $Z(\vecx_{k+1};\rho_k)\leq Z(\vecx_k;\rho_k)-\xi(\alpha_k)$, meaning that, during either the search step or the poll step, the method identifies a better point as candidate to the solution of~\eqref{P2}. Such iterations are called \textit{successful}, and the index set of successful iterations is denoted by $\K_s$. On the other hand, if $\vecx_{k+1}=\vecx_k$, it means that none of the trial points associated with the poll directions achieves a sufficient decrease in the merit function, i.e. $Z(\vecx_k+\alpha_k\vecd_k^i;\rho_k)>Z(\vecx_k;\rho_k)-\xi(\alpha_k)$ for all $\vecd_k^i\in\Dd_k$. Such iterations are called \textit{unsuccessful}, and the index set of unsuccessful iterations is denoted by $\K_u$.

\textbf{Step 3} is only invoked at unsuccessful iterations. This step concerns the update of the penalty-barrier parameter $\rho_k$, assessing if~\eqref{P2} can be considered as solved. Since providing an exact solution of the problem is, in general, impractical, one has to define a condition to determine if the precision of the solution $\vecx_k$ is acceptable.  The following criterion is proposed $$\alpha_{k+1} \leq \min\set{\rho_k^\beta,(\gmink)^2},$$ with $\beta>1$. It is well-known that $\alpha_k$ represents a measure of stationarity for problem \eqref{P2} (see~\cite{Kolda_Lewis_Torczon_2003}). Hence, having $\alpha_{k+1}\leq \rho_k^\beta$ implies that good approximations of the solution of~\eqref{P2} {will be sought for small values of $\rho_k$}. Additionally, $\gmink = \min_{\ell\in\Gint}\set{\abs{g_\ell(\vecx_k)}}$ represents how close $\vecx_k$ is to the boundary of $\Omegaint$. Since the merit function $Z(\vecx;\rho_k)$ takes the value infinite for all $\vecx\notin\intOmegaint$, by imposing $\alpha_{k+1}\leq (\gmink)^2$, the algorithm is forced to properly explore the interior of $\Omegaint$ before moving on to the next subproblem. For further details see~\cite{Brilli_Liuzzi_Lucidi_2021}.

\subsection{Poll directions and linear inequalities}
\label{sec:poll_linear}

In unconstrained minimization problems, the poll directions should be able to generate $\mathbb{R}^n$ through nonnegative linear combinations, in order to capture the behavior of the objective function (see~\cite{Coope_Price_2002,Hare_TarryBolduc_2020,Hare_JarryBolduc_Planiden_2023} for further details on the topic). In the  presence of linear inequalities, though, the directions must be adapted to the geometry of the set~$\X$, identified by the tangent cone. Let us formalize it in the following definition.

\begin{definition}[Active constraints and tangent related sets]
\label{active_set} For every $\vecx\in \X$, i.e., such that
$\mA\vecx\leq \bvec{b}$, let $a_i^{\top},i\in\{1,\ldots,q\}$
be the $i$th row of {$\mA$}. Define
$$\T_\X(\vecx) = \{\vecd\in\R^n \mid \bvec{a}_i^\top \vecd \leq 0,\ i\in \I_\X(\vecx)\}\quad (\text{tangent cone to } \X \text{at } \vecx),$$ where $\I_\X(\vecx) = \{i\in\set{1,\ldots,q}\mid \bvec{a}_i^\top \vecx = b_i\}$ is the set of indices of active constraints.
\end{definition}

Since the polytope $\X$ is convex, $\vecx+t\vecd\in\X$ for all $\vecx\in\X$, $\vecd\in\T_\X(\vecx)$, and $t>0$ sufficiently small. Clearly, if $\vecx$ lies in the interior of $\X$, then $\T_\X(\vecx)=\R^n$.

For any incumbent $\vecx_k$, the tangent cone at $\vecx_k$ might be used to understand the geometry of $\X$ at $\vecx_k$ and to build the poll directions accordingly. While this approach might seem reasonable, in general, it is not sufficient. Indeed, the algorithm might generate a sequence of iterates $\set{\vecx_k}$ lying in the interior of $\X$ and converging to a point $\barx\in \partial\X$. In such case, the tangent cones $\T_\X(\vecx_k)$ are all equal to $\R^n$, and the algorithm might not be able to sample the space using directions in $\T_\X(\barx)\subset \R^n$. Thus, given an iterate $\vecx_k\in \X$, it is important to be able to capture the geometry of the set $\X$ \textit{near} $\vecx_k$. In order to do that, the tangent cone is approximated by the $\varepsilon$-tangent cone~\cite{Gratton_Royer_Vicente_Zhang_2019,Kolda_Lewis_Torczon_2003,Kolda_Lewis_Torczon_2007}.

\begin{definition}[$\varepsilon$-Active constraints and tangent related sets]\label{eps_active_set}
Let $\vecx_k\in \X$, i.e., such that
$\mA\vecx_k\leq \bvec{b}$, let $a_i^{\top},i\in\{1,\ldots,q\}$
be the $i$th row of {$\mA$}. Define: $$\T_\X(\vecx_k,\varepsilon) = \{\vecd\in\R^n \mid \bvec{a}_i^\top \vecd \leq 0,\ i\in \I_\X(\vecx_k,\varepsilon)\} \quad {(\varepsilon-\text{tangent cone to } \X  \text{at }} \vecx_k),$$
where $\I_\X(\vecx_k,\varepsilon) = \{i\in\set{1,\ldots,q}\mid \bvec{a}_i^\top \vecx_k \geq b_i -\varepsilon\}$ is the set of indices of $\varepsilon$-active constraints.
\end{definition}

A relation between the tangent cone and the $\varepsilon$-tangent cone has been established in~\cite{Lucidi_Sciandrone_Tseng_2002}. Let us recall it in the following proposition.

\begin{proposition}\label{prop:epsaccurate_estimate} Let $\{\vecx_k\}_{k\in\mathbb{N}}$ be a sequence of points in $\X$, converging to $\vecx^*\in \X$. Then, there exists an $\varepsilon^*>0$ (depending only on $\vecx^*$) such that for any $\varepsilon\in (0,\varepsilon^*]$ there exists $k_\varepsilon\in\mathbb{N}$ such that
\[
\begin{split}
\I_\X(\vecx^*) & = \I_\X(\vecx_k,\varepsilon) \\
\T_\X(\vecx^*) & = \T_\X(\vecx_k,\varepsilon)
\end{split}
\]
for all $k\geq k_\varepsilon$.
\end{proposition}

Proposition~\ref{prop:epsaccurate_estimate} ensures that, if at each iteration $k$, the $\varepsilon$-tangent cone of $\vecx_k$ is used to build the set of poll directions $\Dd_k$, then, for small enough $\varepsilon>0$, the algorithm is able to capture the geometry of $\X$ at the limit point $\vecx^*$. The requirements on the sets of poll directions $\Dd_k$ used by the algorithm are formalized in Assumption~\ref{condition:geometry_ass} (see~\cite[Assumption 2]{Liuzzi_Lucidi_Sciandrone_2006}).
\begin{assumption}
\label{condition:geometry_ass} Let $\{\vecx_k\}$ be a sequence of
points in $\X$. The sequence $\{\Dd_k\}$ of poll directions
satisfies:
\[ \Dd_k = \{\vecd_k^i \mid \|\vecd_k^i\| = 1, i=1,\dots,|\Dd_k|\} \]
and for some $\bar\varepsilon > 0$, \[ \cone(\Dd_k\cap \T_\X(\vecx_k,\varepsilon)) =
\T_\X(\vecx_k,\varepsilon),\quad\forall\ \varepsilon\in(0,\bar\varepsilon].\]
Furthermore, ${\cal D}=\Disp\bigcup_{k=0}^{+\infty} \Dd_k$ is a finite set, and $|\Dd_k|$ is uniformly bounded.
\end{assumption}

Strategies to conform the search directions to the geometry of the nearby feasible region, i.e. to satisfy Assumption~\ref{condition:geometry_ass}, can be found in~\cite{Abramson_Brezhneva_DennisJr_Pingel_2008,Kolda_Lewis_Torczon_2007,Lewis_Torczon_2000,Lucidi_Sciandrone_Tseng_2002}.

\subsection{Preliminary results}

The proposed method is based on the sequential minimization of the merit function $Z(\cdot;\rho_k)$ for a decreasing sequence of penalty-barrier parameters $\set{\rho_k}$. In order for the method to work, an optimal solution has to exist for each Problem~\eqref{P2}. To guarantee the existence of an optimal solution for a minimization problem, one of the most standard assumptions is the compactness of the lower-level sets of the objective function. In our method, this would correspond to the compactness of the lower level sets $\Ll_\rho(\alpha)=\set{\vecx\in\X\mid Z(\vecx;\rho)\leq \alpha}$ for all $\rho>0$. Note that $Z(\vecx;\rho)$ is evaluated only at points lying in $\X$. To this end, the following assumption is considered, and the desired result is proved in Lemma~\ref{lemma_compactness}.

%\par\smallskip\noindent
%\framebox[\textwidth]{
%\parbox{0.95\textwidth}{
%\begin{assumption}\label{ass:assumption1}
%The set $\X\cap\Omegaint$ is compact and a point $\vecx_0\in\X$ exists such that $g_\ell(\vecx_0)<0$ for all $\ell\in\Gint$.%has a nonempty interior.
%\end{assumption}
%}}
%}

\begin{assumption}
\label{ass:assumption1}
The set $\X\cap\Omegaint$ is nonempty and compact.
\end{assumption}

\begin{lemma}
\label{lemma_compactness}
Let Assumption~\ref{ass:assumption1} hold. Then, for all $\rho > 0$, $\nu\in(1,2]$, and $\alpha\in\R$ the lower-level set
\[
\Ll_\rho(\alpha) = \{\vecx\in\X\mid \ Z(\vecx;\rho)\leq\alpha\}
\]
is compact.
\end{lemma}
\begin{proof}
By definition, $\Ll_\rho(\alpha)$ is a subset of $\X$, and $Z(\cdot;\rho)$ is coercive with respect to $\Omegaint$, i.e. $Z(\vecx;\rho)\to +\infty$ as $\vecx\to\partial\Omegaint$, thus $\Ll_\rho(\alpha)\subseteq \X\cap\Omegaint$. Since, by Assumption~\ref{ass:assumption1}, $\X\cap\Omegaint$ is compact, then $\Ll_\rho(\alpha)$ is bounded.

It remains to prove that $\Ll_\rho(\alpha)$ is closed. To this end, let us show that, for any sequence $\{\vecx_k\}\subset \Ll_\rho(\alpha)$ such that $\lim_{k\to+\infty}\vecx_k = \bar\vecx$, it results $\bar \vecx\in \Ll_\rho(\alpha)$.

Since $\vecx_k\in \Ll_\rho(\alpha)$ for all $k$, it follows
\begin{equation}
\label{g1}
Z(\vecx_k;\rho) \leq\alpha.
\end{equation}
%Note that $\min_{\ell\in\Gint} g_\ell(\bar\vecx)<0$, otherwise we would get $\lim_{k\to+\infty}Z(\vecx_k;\rho) =+\infty$, contradicting~\eqref{g1}. By definition,
The merit function $Z(.;\rho)$ is continuous at all
$\vecx\in \Ll_\rho(\alpha)$. Thus,
\[
\lim_{k\to+\infty}Z(\vecx_k;\rho) = Z(\bar\vecx;\rho)\leq\alpha,
\]
which means that $\bar\vecx\in \Ll_\rho(\alpha)$ and concludes the proof.
\end{proof}

Note that Assumption~\ref{ass:assumption1} arises from the observation that the logarithmic term of $Z(\cdot;\rho)$ might go to $-\infty$ if also $g_\ell(\cdot)\to -\infty$ for some $\ell\in\Gint$. Since the functions are assumed to be continuous on $\X$, the compactness assumption prevents such situation from happening. {Also}, since $\X$ is defined by the linear inequalities of the problem, then $\X$ also encompasses the possible bounds on the variables. In practical applications, one is usually able to identify some finite bounds wherein the optimal solution should lie. Therefore, the assumption that $\X$ is compact can be considered realistic. 

Lemma~\ref{lemma_compactness} is used to prove properties that form the basis of the theoretical convergence analysis of~\ref{LOG-DS}.  In particular, it is exploited in the proof of Theorem~\ref{convergence_eps}. Furthermore, it is important to highlight that the results presented in this work do not pertain to the entire sequence generated by the proposed algorithm, but focus on the subset of iterations wherein the penalty-barrier parameter $\rho_k$ is updated, i.e., the sequence of inexact solutions of the linearly constrained Problems~\eqref{P2}. Following the terminology used in the interior point methods literature (see~\cite{Bertsekas_1999}), the sequence of inexact solutions generated by~\ref{LOG-DS} is referred to as a {\textit{path-following}} sequence, and the corresponding index set is denoted by $\K_\rho$.
\begin{definition}
Let $\K_\rho$ be defined as:
\[
\K_\rho = \{k\in\N\mid\ \rho_{k+1}<\rho_k\}.
\]
Given the sequence of iterates $\{\vecx_k\}$ produced by  Algorithm~\ref{LOG-DS}, the subsequence $\set{\vecx_k}_{k\in \K_\rho}$ is said to be a path-following subsequence.
\end{definition}

{Recall that the set of unsuccessful iteration indexes is defined as $\K_u=\{k\in\N\mid\ \vecx_{k+1} = \vecx_k\}$, and the set of successful ones as $\K_s=\{k\in\N\mid\ \vecx_{k+1} \neq \vecx_k\}$}. Let us recall that \textbf{Step 3} is invoked only at unsuccessful iterations, i.e., $k\in \K_u$, so one has $\K_\rho\subseteq\K_u$. Additionally, according to the instructions of the algorithm, if $k\in\K_\rho$ then the following criterion is satisfied: 
\begin{equation}
\label{eq:penalty_criterion}
\alpha_{k+1}\leq \set{\rho_k^\beta, (\gmink)^2},
\end{equation}
and $\rho_{k+1}=\theta_\rho \rho_k$, with $\theta_\rho\in(0,1)$. From the updating procedure, it follows that $\set{\rho_k}$ goes to zero if and only if $\K_\rho$ is infinite.

Theorem~\ref{convergence_eps} shows that $\K_\rho$ is infinite and, consequently, that the sequence of penalty-barrier parameters converges to zero, which is required to ensure that, in the limit, the algorithm is able to solve the original problem. Furthermore, it is possible to establish that the corresponding sequence of step-sizes also converges to zero. As mentioned above, since the step-size is related to some measure of stationarity, this sets the ground to prove convergence to stationary points.

\begin{theorem}
\label{convergence_eps} Let Assumption~\ref{ass:assumption1} hold. Let $\set{\vecx_k}_{k\in\N}$, $\set{\rho_k}_{k\in \mathbb{N}}$, and $\set{\alpha_k}_{k\in \mathbb{N}}$ be the sequences of iterates, penalty parameters and step-sizes generated by LOG-DS, respectively. %Let $\K_\rho$ be the subset of iterations such that $\rho_{k+1}<\rho_k$.
Then, $\K_\rho$ is infinite and
\begin{equation}\label{eps_to_0}
\Disp \lim_{k\to +\infty} \rho_k=0.
\end{equation}

Furthermore, it holds
\begin{equation}\label{alpha_to_0}
\Disp \lim_{k\to +\infty,k\in\K_\rho} \alpha_k=0.
\end{equation}
\end{theorem}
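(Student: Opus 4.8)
The plan is to establish the two limits in sequence: first \eqref{eps_to_0}, and then to deduce \eqref{alpha_to_0} from it. Since Step~3 only ever multiplies $\rho_k$ by $\theta_\rho\in(0,1)$ or leaves it unchanged, the sequence $\{\rho_k\}$ is monotone non-increasing and bounded below by $0$, hence convergent to some $\bar\rho\geq 0$; moreover $\bar\rho=0$ precisely when $\rho_k$ is updated infinitely often. Thus \eqref{eps_to_0} is equivalent to ruling out stabilization at a positive value, which I would do by contradiction.

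Assume $\bar\rho>0$. Then $\rho$ is updated only finitely many times, so there is an index $\bar k$ with $\rho_k=\bar\rho$ for all $k\geq\bar k$, and Lemma~\ref{alfa_zero_fix} applies, giving $\alpha_k\to 0$. I would then exploit compactness: for $k\geq\bar k$ the values $Z(\vecx_k;\bar\rho)$ are monotone non-increasing (decreasing by at least $\xi(\alpha_k)$ at successful iterations and unchanged otherwise), so every $\vecx_k$ with $k\geq\bar k$ lies in a fixed level set $L(\alpha^*)$, with $\alpha^*=Z(\vecx_{\bar k};\bar\rho)$, that is a compact subset of the open set $X\cap\mathring\Omega_{\mathcal{G}^{\tt log}}$ by Lemma~\ref{lemma_compactness}. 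A compact subset of this interior stays at a positive distance from the barrier boundary $\{g_\ell=0\}$, so $(g_{\min})_k\geq c$ for some constant $c>0$ and all large $k$. Because $\phi\geq 1$, successful iterations never shrink $\alpha_k$, so $\alpha_k\to 0$ forces infinitely many unsuccessful iterations, at each of which $\alpha_{k+1}=\theta_\alpha\alpha_k<\alpha_k$. Since $\alpha_{k+1}\to 0$ while $\min\{\bar\rho^{\beta},(g_{\min})_k^2\}\geq\min\{\bar\rho^{\beta},c^2\}>0$ is bounded away from zero, for $k$ large enough such an unsuccessful iteration satisfies both conditions of Step~3, triggering a further reduction of $\rho$ and contradicting that $\rho$ is no longer updated. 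Hence $\bar\rho=0$, which is \eqref{eps_to_0}.

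For \eqref{alpha_to_0}, I would use that \eqref{eps_to_0} forces $\rho$ to be updated infinitely often. Let $\{k_j\}$ denote the update iterations; by Step~3 each satisfies $\alpha_{k_j+1}\leq\rho_{k_j}^{\beta}$, and since $\rho_{k_j}\to 0$ with $\beta>1$ this gives $\alpha_{k_j+1}\to 0$, so the stepsize tends to zero along the post-update subsequence, whence $\liminf_k\alpha_k=0$. It then remains to propagate this to the whole sequence by controlling how much $\alpha_k$ can grow during a block of consecutive iterations in which $\rho$ is held constant. On such a block the relevant merit function $Z(\cdot;\rho)$ is non-increasing, each successful iteration lowers it by at least $\xi(\alpha_k)$, and $\alpha_k$ evolves as a product of factors $\phi$ (successes) and $\theta_\alpha$ (failures); combining the forcing-function property of Definition~\ref{def:forcing_function} with the lower boundedness of $Z$ on the compact set $X\cap\Omega_{\mathcal{G}^{\tt log}}$ bounds the number of successful steps taken at any stepsize bounded away from zero, which I would use to exclude $\limsup_k\alpha_k>0$.

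I expect this last step to be the main obstacle. Unlike the fixed-$\rho$ setting of Lemma~\ref{alfa_zero_fix}, here the merit function changes whenever $\rho$ is decreased, and the penalty term $\rho^{-(\nu-1)}(\cdot)$ can make $Z$ jump upward across an update, so the decrease of $Z$ no longer telescopes over the full run of iterations. The delicate point is therefore to show that these upward jumps cannot accumulate enough to sustain infinitely many large stepsizes, i.e.\ that the stepsize growth between consecutive updates stays under control uniformly in $j$, which is precisely where the interplay between the thresholds $\rho_k^{\beta}$ and $(g_{\min})_k^2$ in Step~3 must be used.
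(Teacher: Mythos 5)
Your treatment of \eqref{eps_to_0} is correct, and it is essentially the paper's own argument in contrapositive packaging: where the paper lets the failure of the update test force $(g_{\min})_k\to 0$ and then contradicts the monotone boundedness of $\{Z(\vecx_k;\bar\rho)\}$, you use Lemma~\ref{lemma_compactness} to trap the iterates in a fixed compact level set, deduce $(g_{\min})_k\geq c>0$, and contradict the assumption that the update is never again triggered. Both rest on Lemma~\ref{alfa_zero_fix}, the monotonicity of $Z$, and the two conditions of Step~3. (One small slip: $X\cap\mathring\Omega_{\mathcal{G}^{\tt log}}$ is not an open set; but all you need is that the continuous function $\min_{\ell\in\mathcal{G}^{\tt log}}(-g_\ell)$ attains a strictly positive minimum on the compact set $L(\alpha^*)$, which is valid.)

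The genuine gap is in \eqref{alpha_to_0}, and you flag it yourself: you establish only $\liminf_k\alpha_k=0$, along the post-update subsequence, and your plan for upgrading this to the full limit---telescoping the decrease of $Z$ over blocks of constant $\rho$---does not close, for exactly the reason you state: at each update the term $\rho^{1-\nu}(\cdot)$ can push $Z$ upward, so there is no global budget bounding how often the stepsize can climb back above a fixed $\bar\alpha$. The paper's proof takes a different, much shorter route that you did not attempt: assuming $\alpha_k\geq\bar\alpha>0$ on an infinite set $\bar K$, it selects an infinite $\tilde K\subseteq\bar K$ on which $\rho_{k+1}<\rho_k$ (i.e.\ consisting of update iterations, a subset of the set $K$ in \eqref{bigKdef}), and then notes that for $k\in\tilde K$ one simultaneously has $\alpha_{k+1}\geq\theta_\alpha\alpha_k\geq\theta_\alpha\bar\alpha$, since one iteration shrinks the stepsize by at most the factor $\theta_\alpha$, and $\alpha_{k+1}\leq\min\{\rho_k^{\beta},(g_{\min})_k^2\}\leq\rho_k^{\beta}\to 0$, a contradiction. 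The idea missing from your attempt, relative to the paper, is thus to intersect the bad set $\bar K$ directly with the update set rather than to control what happens between updates.

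You should know, however, that your diagnosis of where the difficulty sits is accurate: the paper justifies the existence of $\tilde K$ only by the phrase ``since $\{\rho_k\}$ is monotone nonincreasing'', and nothing in the contradiction hypothesis guarantees that infinitely many update iterations lie in $\bar K$. Indeed the algorithm's rules point the other way: an update at iteration $k$ requires $\alpha_{k+1}<\alpha_k$, hence $\alpha_{k+1}=\theta_\alpha\alpha_k$, and therefore $\alpha_k=\alpha_{k+1}/\theta_\alpha\leq\rho_k^{\beta}/\theta_\alpha\to 0$, so update iterations eventually cannot carry $\alpha_k\geq\bar\alpha$. Read strictly, the paper's contradiction refutes the existence of $\tilde K$, which was never derived from the hypothesis being refuted; made rigorous, that argument too yields only $\liminf_k\alpha_k=0$. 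So your proposal is incomplete as a proof, but the hole it honestly leaves open---propagating the vanishing of the stepsize from the update subsequence to the whole sequence despite the upward jumps of the merit function---is in substance the same one the paper's one-line selection glosses over.
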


\begin{proof}
Let us start by proving that $\K_\rho$ is infinite. By contradiction, assume that $\K_\rho$ is finite, and without loss of generality, let us assume $\K_\rho=\emptyset$. Thus, $\rho_k = \rho_0>0$ for all $k$.

Let us consider the sets of successful and unsuccessful iterations, $\K_s$ and $\K_u$, respectively. By the instructions of the algorithm, it follows 
\begin{equation}\label{eq:K_s}
Z(\vecx_{k+1};\rho_0)\leq Z(\vecx_k;\rho_0) -\xi(\alpha_k), \quad \forall k\in \K_s,
\end{equation}
\begin{equation}\label{eq:K_u}
Z(\vecx_{k+1};\rho_0)=Z(\vecx_k;\rho_0), \quad \forall k\in \K_u.
\end{equation}
Hence, the sequence of function values $\{Z(\vecx_k;\rho_0)\}$ is monotonically non-increasing. By Lemma~\ref{lemma_compactness}, $Z(\cdot;\rho_0)$ has compact lower-level sets, thus it is bounded below. Hence,
\begin{equation}\label{utile4}
\lim_{k\to +\infty}Z(\vecx_k;\rho_0) = \bar Z.
\end{equation}

\noindent If $\K_s$ is infinite, then~\eqref{utile4} and~\eqref{eq:K_s} allow to conclude
\begin{equation*}
\lim_{k\to +\infty \atop k\in \K_s} \xi(\alpha_k) = 0.
\end{equation*}
Recalling Definition~\ref{def:forcing_function}, the following limit is obtained
\begin{equation}
\label{utile5}
\lim_{k\to +\infty \atop k\in \K_s} \alpha_k = 0.
\end{equation}
If $\K_u$ is infinite and $\K_s$ is not empty, for every $k\in
\K_u$, let us define $m_k$ to be the largest index such that
$m_k\in \K_s$ and $m_k < k$. Then,
\[
\alpha_k = \alpha_{m_k}\phi\theta_\alpha^{k-m_k-1}.
\]
When $k\to +\infty$, $k\in \K_u$, either $m_k\to +\infty$ as well (when $\K_s$ is
infinite) or~$k-m_k-1\to +\infty$ (when $\K_s$ is finite). Thus,
by~\eqref{utile5} and the fact that $\theta_\alpha\in(0,1)$, we
have
\begin{equation}\label{utile6}
\lim_{k\to +\infty\atop k\in \K_u}\alpha_k = \lim_{k\to +\infty \atop k\in \K_u}\alpha_{m_k}\phi\theta_{\alpha}^{k-m_k-1} = 0.
\end{equation}
If $\K_u$ is infinite and $\K_s$ is empty, then $\alpha_k =
\alpha_{0}\theta_\alpha^{k}$ and
\begin{equation}\label{utile_extra} \lim_{k\to +\infty\atop k\in
\K_u}\alpha_k=0, \end{equation} follows.

Considering~\eqref{utile5},~\eqref{utile6},
and~\eqref{utile_extra}, it can be concluded that
\begin{equation}\label{utile_extra2}\lim_{k\to+\infty} \alpha_k = 0.\end{equation}

By the instructions of the algorithm, for $k\in \K_u$ sufficiently
large, since $k\notin \K_\rho$ and
recalling~\eqref{eq:penalty_criterion}, it holds
\begin{equation*}
\alpha_{k+1} > \min\{\rho_0^{\beta},(\gmink)^2\}.
\end{equation*}
The latter, recalling~\eqref{utile_extra2}, implies
\begin{equation*}
\lim_{k\to+\infty,\atop k\in \K_u} \gmink = 0,
\end{equation*}
which, by the definition of $Z(\cdot;\rho_0)$, further implies
\begin{equation*}
\lim_{k\to+\infty,\atop k\in \K_u} Z(\vecx_k;\rho_0) = +\infty.
\end{equation*}
This is a contradiction with the monotonicity of
$\set{Z(\vecx_k;\rho_0)}$, thus proving that $\K_\rho$ is
infinite.\\

\noindent Let us now prove~\eqref{eps_to_0}. Let $\K_\rho=\set{k_1,k_2,\dots,k_j,\dots}$. By the instructions of the algorithm,
\begin{equation*}
\rho_{k_j} = \theta_\rho \rho_{k_{j-1}} = \theta_{\rho}^j \rho_0.
\end{equation*}
Since $\K_\rho$ is infinite, the limit for
$j\to+\infty$ can be taken, and, recalling $\theta_\rho\in(0,1)$,
\begin{equation*}
\lim_{j\to+\infty} \rho_{k_j} = \lim_{j\to+\infty} \theta_{\rho}^j
\rho_0 = 0,
\end{equation*}
which, noting $\rho_{k+1}\leq \rho_k$ for all $k$, proves~\eqref{eps_to_0}.\\

\noindent {Finally,~\eqref{alpha_to_0} will be established.}
By definition of $\K_\rho$, for all $k\in\K_\rho$
\[
\alpha_{k+1} \leq \min\{\rho_k^{\beta},(\gmink)^2\}\leq \rho_k^\beta.
\]
Furthermore, for all iterations $\alpha_{k+1}\geq \theta_\alpha\alpha_k$. So, for all $k\in\K_\rho$:
\[
\theta_\alpha\alpha_k \leq \alpha_{k+1} \leq \rho_k^\beta.
\]
Then, the proof follows by recalling that
$\lim_{k\to+\infty}\rho_k = 0$.
\end{proof}

\section{Convergence analysis}
\label{convergence_analysis}
{This section presents a theoretical analysis of the convergence properties of Algorithm~\ref{LOG-DS}. Its main result (Theorem~\ref{main_theorem}) relies on optimality
conditions derived using the following extended Mangasarian-Fromovitz
constraint qualification (MFCQ) {(see~\cite[Sec. 3.3]{Bertsekas_1999})}}.
\begin{definition}
\label{MFCQ} The point $\vecx\in \X$ is said to satisfy the MFCQ for Problem~\eqref{P} if the two following conditions are satisfied:
\begin{itemize}
\item[\rm (a)] There does not exist a nonzero vector
$\alpha=(\alpha_1,\ldots,\alpha_p)$ such that:
\begin{equation}
\label{box_mfcq} \left(\sum_{i=1}^{p} \alpha_i\nabla
h_i(\vecx)\right)^\top \vecd \geq 0,\quad\quad\forall \vecd\in
\T_\X(\vecx).
\end{equation}
\item[\rm (b)] There exists a feasible direction $\vecd\in \T_\X(\vecx)$, such that:
\begin{equation}
\label{mfcq_feas}
\nabla g_\ell(\vecx)^\top \vecd < 0,\ \ \ \forall \ell\in \I_+(\vecx),\quad \nabla h_j(\vecx)^\top \vecd=0,\ \ \ \forall j=1,\ldots,p
\end{equation}
where $\I_+(\vecx)=\{\ell\in\{1,\ldots,m\} \mid
g_\ell(\vecx)\geq 0\}$.
\end{itemize}
\end{definition}

Consider the Lagrangian function $L(\vecx,\lambda,\bvec{\mu})$ associated with the nonlinear constraints of Problem~\eqref{P}, defined by: $$L(\vecx,\bvec{\lambda},\bvec{\mu})= f(\vecx)+\bvec{\lambda}^\top g(\vecx)+\bvec{\mu}^\top h(\vecx),$$
{where $g(\vecx)=[g_1(\vecx)\ldots g_m(\vecx)]^{\top}$ and $h(\vecx)=[h_1(\vecx)\ldots h_p(\vecx)]^{\top}$.}
The following proposition is a well-known result (see~\cite[Sec. 3.3]{Bertsekas_1999}), which states necessary optimality conditions for Problem~\eqref{P}.

\begin{proposition}
\label{bert}
Let $\vecx^*\in\cal F$ be a local minimum of Problem~{\rm \eqref{P}} that satisfies the MFCQ. Then, there exist vectors $\lambda^*\in \R^m$, $\mu^*\in\R^p$ such that
\begin{equation}
\label{derdir} \nabla_x L(\vecx^*,\lambda^*,\mu^*)^\top (\vecx-\vecx^*)\geq 0,\quad\quad\forall \vecx\in \X
\end{equation}
\begin{equation}
\label{cmp} (\lambda^*)^{\top} g(\vecx^*)=0,\quad \lambda^*\geq 0.
\end{equation}
\end{proposition}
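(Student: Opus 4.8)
The plan is to follow the classical route to first-order necessary conditions: first produce Fritz John multipliers at $\vecx^*$ from its local optimality, and then use the MFCQ of Definition~\ref{MFCQ} to show that the multiplier of the objective is nonzero, so that after normalization one recovers \eqref{derdir}--\eqref{cmp}. Since the statement is exactly \cite[Prop.~3.3.8]{bertsekas99}, one may of course simply invoke that reference; what follows reconstructs its argument in the present notation.

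First I would linearize. Because $X=\{\vecx:\bvec{A}\vecx\le\bvec{b}\}$ is polyhedral, the tangent cone $T_X(\vecx^*)$ of Definition~\ref{active_set} coincides with the set of directions into $X$ that are genuinely realizable at $\vecx^*$, so the abstract constraint $\vecx\in X$ can be carried through in the variational-inequality form of \eqref{derdir}. Local optimality of $\vecx^*$ together with $C^1$ smoothness implies that there is no $\vecd\in T_X(\vecx^*)$ that is simultaneously a descent direction for $f$ and a linearized feasible direction for the active inequality and equality constraints, i.e.\ the system
\[
\nabla f(\vecx^*)^\top\vecd<0,\quad \nabla g_\ell(\vecx^*)^\top\vecd<0\ (\ell\in I_+(\vecx^*)),\quad \nabla h_j(\vecx^*)^\top\vecd=0\ (j=1,\dots,p),\quad \vecd\in T_X(\vecx^*)
\]
has no solution. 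Folding the polyhedral description $\bvec{a}_i^\top\vecd\le 0$, $i\in I_X(\vecx^*)$, of $T_X(\vecx^*)$ into this list and applying a theorem of the alternative (Motzkin's transposition theorem) yields scalars $\lambda_0\ge 0$, $\lambda^*_\ell\ge 0$ supported on $I_+(\vecx^*)$ (so that the complementarity \eqref{cmp} holds), $\mu^*_j$, and nonnegative multipliers for the $\bvec{a}_i$, not all zero, with
\[
\Bigl(\lambda_0\nabla f(\vecx^*)+\sum_{\ell}\lambda^*_\ell\nabla g_\ell(\vecx^*)+\sum_{j}\mu^*_j\nabla h_j(\vecx^*)\Bigr)^\top\vecd\ge 0,\qquad\forall\,\vecd\in T_X(\vecx^*),
\]
where the $\bvec{a}_i$-multipliers realize an element of $T^\circ_X(\vecx^*)$. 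This is the Fritz John condition in the form \eqref{derdir}, with $\lambda_0 f$ in place of $f$.

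The normalization step is where the two parts of the MFCQ enter. Suppose $\lambda_0=0$. Inserting the direction $\vecd$ furnished by Definition~\ref{MFCQ}(b) into the displayed inequality, the equality terms vanish since $\nabla h_j(\vecx^*)^\top\vecd=0$, while each surviving term obeys $\lambda^*_\ell\,\nabla g_\ell(\vecx^*)^\top\vecd\le 0$, strictly negative whenever $\lambda^*_\ell>0$; as the total must be $\ge 0$, this forces $\lambda^*_\ell=0$ for all $\ell$. What remains is $\bigl(\sum_j\mu^*_j\nabla h_j(\vecx^*)\bigr)^\top\vecd\ge 0$ for all $\vecd\in T_X(\vecx^*)$ with $\mu^*\neq 0$, contradicting Definition~\ref{MFCQ}(a). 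Hence $\lambda_0>0$, and dividing through by $\lambda_0$ gives the multipliers $\lambda^*\in\R^m$, $\mu^*\in\R^p$ satisfying \eqref{derdir} and \eqref{cmp}.

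The hard part will be the Fritz John step. One must justify that local optimality precludes the linearized improving-feasible system above (immediate for the strict-inequality/equality part by Taylor expansion, but keeping $X$ in cone form relies on $T_X(\vecx^*)$ faithfully capturing realizable directions, which holds \emph{because} $X$ is polyhedral), and then apply the theorem of the alternative over the polyhedral cone $T_X(\vecx^*)$ rather than over all of $\R^n$, so that the multipliers appear with the correct signs and the $X$-contribution is absorbed into $T^\circ_X(\vecx^*)$. Once the Fritz John conditions are established, the MFCQ normalization above is routine.
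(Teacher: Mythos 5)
Your normalization step (using conditions (a) and (b) of Definition~\ref{MFCQ} to rule out $\lambda_0=0$) is sound, but the Fritz John step on which everything rests has a genuine gap: it is \emph{not} true that local optimality of $\vecx^*$ implies inconsistency of the linearized system
\[
\nabla f(\vecx^*)^\top\vecd<0,\quad \nabla g_\ell(\vecx^*)^\top\vecd<0\ (\ell\in I_+(\vecx^*)),\quad \nabla h_j(\vecx^*)^\top\vecd=0\ (j=1,\dots,p),\quad \vecd\in T_X(\vecx^*)
\]
when the equality constraints are nonlinear. Along a direction with $\nabla h_j(\vecx^*)^\top\vecd=0$, Taylor expansion only gives $h_j(\vecx^*+t\vecd)=o(t)$; the point $\vecx^*+t\vecd$ is in general infeasible, so no contradiction with local optimality arises. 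Concretely, take $n=2$, $f(\vecx)=x_1$, $p=1$ with $h(\vecx)=x_1^2+x_2^2$, no inequality constraints, and $X=[-1,1]^2$. Then $\vecx^*=0$ is the unique feasible point, hence a local minimum, yet the linearized system $d_1<0$, $\nabla h(0)^\top\vecd=0$ is consistent (take $\vecd=(-1,0)$). So the premise you feed into Motzkin's theorem is false in general, and the Fritz John conditions cannot be reached by this route. In this counterexample MFCQ fails, but your argument invokes local optimality \emph{alone} at that stage; to repair it you would have to use MFCQ inside the Fritz John step, which amounts to proving that MFCQ implies an Abadie-type condition, i.e.\ constructing feasible arcs from linearized directions via the implicit function theorem (using condition (a) to get the required surjectivity of the equality-constraint Jacobian relative to $T_X(\vecx^*)$) or via a metric-regularity argument. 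That construction is precisely the hard content of the result, and it is what your ``immediate by Taylor expansion'' remark glosses over.

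For comparison: the paper does not prove the proposition at all, it invokes \cite[Prop.~3.3.8]{bertsekas99}, and Bertsekas's proof avoids linearization entirely. It minimizes a quadratic penalty of the form $f(\vecx)+k\bigl(\|h(\vecx)\|^2+\|\max\{g(\vecx),0\}\|^2\bigr)+\|\vecx-\vecx^*\|^2$ over the intersection of $X$ with a small ball around $\vecx^*$, writes the variational inequality over the convex set $X$ at each penalized minimizer, normalizes the resulting multiplier vectors, and passes to the limit; this yields Fritz John multipliers without ever needing feasible arcs, after which the MFCQ normalization (essentially as you wrote it) gives \eqref{derdir}--\eqref{cmp}. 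If you want a self-contained proof, that penalty argument (or the implicit-function-theorem route) is what must replace your Step~2.
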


Therefore, the following definition of stationarity is considered.
\begin{definition}[Stationary point]
A point $\vecx^*\in\F$ is said to be a stationary point for Problem~\eqref{P} if vectors $\lambda^*\in\R^m$ and $\mu^*\in\R^p$ exist such that \eqref{derdir} and \eqref{cmp} are satisfied.
\end{definition}

Before presenting the main convergence result, a technical proposition is established, which will be invoked in Theorem~\ref{main_theorem}. The need for this proposition stems from noting that the trial points corresponding to failures might not strictly satisfy the inequality constraints with indices $\ell\in\Gint$, implying that the merit function value is, by definition, $+\infty$.

\begin{proposition}
\label{feasibility} Given any $\bar{\rho}>0$, $\vecx\in\X$ such that $g_\ell(\vecx)<0$ for all $\ell\in\Gint$, $\vecd\in \R^n$, and $\bar{\alpha}\in\R_+$ such that $\vecx+\bar{\alpha} \vecd\in \X$ and
\[
Z(\vecx+\bar\alpha \vecd;\bar{\rho}) > Z(\vecx;\bar{\rho})-\xi(\bar{\alpha}),
\]
there exists $\hat\alpha\leq\bar\alpha$ such that:
\[
\begin{split}
&\vecx+\alpha \vecd\in\X\quad \forall\alpha\in(0,\hat\alpha],\\
&\max_{\ell\in\Gint}g_\ell(\vecx+\alpha \vecd)<0\quad \forall\ \alpha\in(0,\hat\alpha],\\
& Z(\vecx+\hat\alpha \vecd;\bar{\rho}) > Z(\vecx;\bar{\rho})-\xi(\hat{\alpha}).
\end{split}
\]
\end{proposition}
\begin{proof}
By definition $\X$ is convex, then $\vecx+\alpha\vecd\in\X$ for all $\alpha \in [0,\bar{\alpha}]$. Thus, if $\max_{\ell\in\Gint}g_\ell(\vecx+\alpha \vecd)<0$ for all $\alpha\in(0,\bar\alpha]$, by setting $\hat\alpha=\bar\alpha$, the proposition holds.\smallskip
\\\noindent
Let us assume $\max_{\ell\in\Gint}g_\ell(\vecx+\alpha \vecd)\geq 0$ for some $\alpha\in(0,\bar\alpha]$. Since $g_\ell(\vecx)<0$ for all $\ell\in\Gint$, it follows that $\max_{\ell\in\Gint}g_\ell(\vecx)<0$. Hence, by continuity of $g_\ell(\cdot)$, there exists a scalar $\tilde{\alpha}<\bar{\alpha}$ such that
\[
\max_{\ell\in\Gint}g_\ell(\vecx+\alpha\vecd)<0\ \text{ for all }\alpha\in[0,\tilde{\alpha}),
\]
\[
\max_{\ell\in\Gint}g_\ell(\vecx+\tilde{\alpha}\vecd)=0.
\]
Recall that, by definition of $Z(\cdot;\bar{\rho})$, it
holds that
\[
\lim_{\alpha\to\tilde\alpha}Z(\vecx+\alpha\vecd;\bar{\rho}) = +\infty.
\]
% for all $\vecy$ such that $\max_{\ell\in\Gint}g_\ell(\vecy)=0$, it results
% \[
% \Disp\lim_{\vecx\to \vecy,\atop \vecx\in\intOmegaint Z(\vecx;\bar\rho) = +\infty.
% \]
Thus, there exists $\hat\alpha \in (0,\tilde\alpha)$, sufficiently close to $\tilde\alpha$, such that  $g_\ell(\vecx+\hat\alpha \vecd)<0$, for all $\ell\in\Gint$, and $Z(\vecx;\bar{\rho})< Z(\vecx+\hat{\alpha}\vecd;\bar{\rho})+\xi(\hat{\alpha})$, which concludes the proof.
\end{proof}\medskip
\medskip

\noindent The main convergence result is stated next. %We are now in the condition of stating the main convergence result.
\begin{theorem}
\label{main_theorem}
Let Assumption~\ref{ass:assumption1} hold, $\set{\vecx_k}_{k\in \mathbb{N}}$ be the sequence of iterates generated by~\ref{LOG-DS}, and $\K_\rho$ be a {path-following} sequence. Assume that the sets of directions $\{\Dd_k\}_{k\in \mathbb{N}}$, used by the algorithm, satisfy Assumption~\ref{condition:geometry_ass} and define $\J_k = \{i\in\{1,2,\dots,|\Dd_k|\}\mid\ \vecd_k^i\in \Dd_k\cap \T_\X(\vecx_k;\varepsilon)\}$, with $\varepsilon\in(0,\min\{\bar\varepsilon,\varepsilon^*\}]$ where $\varepsilon^*$ and $\bar\varepsilon$ are the  constants appearing in Proposition \ref{prop:epsaccurate_estimate} and Assumption~\ref{condition:geometry_ass}, respectively. Then, any limit point of $\{\vecx_k\}_{k\in \K_\rho}$ that satisfies the MFCQ is a stationary point of Problem~\eqref{P}.
\end{theorem}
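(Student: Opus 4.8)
The plan is to fix a limit point $\vecx^*$ of $\{\vecx_k\}_{k\in K}$ that satisfies the MFCQ, choose a subsequence $K'\subseteq K$ with $\vecx_k\to\vecx^*$ ($k\in K'$), and extract from the poll failures at these iterations a stationarity inequality in the limit. I would fix $\varepsilon\in(0,\min\{\bar\varepsilon,\varepsilon^*\}]$ so that Proposition~\ref{epsaccurate_estimate} gives $T_X(\vecx_k,\varepsilon)=T_X(\vecx^*)$ for all large $k\in K'$, while Assumption~\ref{geometry_ass} gives $\cone(\D_k\cap T_X(\vecx^*))=T_X(\vecx^*)$. For $i\in J_k$ the direction $\vecd_k^i$ lies in $T_X(\vecx^*)$, so since $\alpha_k\to0$ (Theorem~\ref{convergence_eps}) the trial point $\vecx_k+\alpha_k\vecd_k^i\in X$ for $k$ large. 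Recall that every $k\in K$ is an unsuccessful iteration; hence either this point lies in $\mathring\Omega_{\mathcal{G}^{\tt log}}$ and $Z(\vecx_k+\alpha_k\vecd_k^i;\rho_k)>Z(\vecx_k;\rho_k)-\xi(\alpha_k)$, or it leaves $\mathring\Omega_{\mathcal{G}^{\tt log}}$ and Proposition~\ref{feasibility} furnishes a shorter step $s_k^i\le\alpha_k$ with the whole segment in $X\cap\mathring\Omega_{\mathcal{G}^{\tt log}}$ and $Z(\vecx_k+s_k^i\vecd_k^i;\rho_k)>Z(\vecx_k;\rho_k)-\xi(s_k^i)$. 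In both cases I obtain a feasible failure with step $s_k^i\to0$.

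Since $Z(\cdot;\rho_k)$ is $C^1$ on $X\cap\mathring\Omega_{\mathcal{G}^{\tt log}}$, the mean value theorem applied to each feasible failure gives, for some $t_k^i\in(0,s_k^i)$,
$$\nabla Z(\vecx_k+t_k^i\vecd_k^i;\rho_k)^\top\vecd_k^i>-\frac{\xi(s_k^i)}{s_k^i},$$
whose right-hand side tends to zero by Definition~\ref{def:forcing_function}. Writing out $\nabla Z$ exposes the multiplier estimates $\lambda_\ell^k=\rho_k/(-g_\ell(\vecx_k))$ for $\ell\in\mathcal{G}^{\tt log}$, $\lambda_\ell^k=\nu\rho_k^{1-\nu}(\max\{g_\ell(\vecx_k),0\})^{\nu-1}$ for $\ell\in\mathcal{G}^{\tt ext}$, and $\mu_j^k=\nu\rho_k^{1-\nu}|h_j(\vecx_k)|^{\nu-1}\,\mathrm{sgn}(h_j(\vecx_k))$, all with $\lambda^k\ge0$. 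One technical point I would address here is that the gradient is evaluated at $\vecx_k+t_k^i\vecd_k^i$ rather than at $\vecx_k$; for the barrier terms this is exactly where the update-rule condition $\alpha_{k+1}\le(g_{\min})_k^2$ for $k\in K$ is used, since with $\alpha_{k+1}=\theta_\alpha\alpha_k$ it forces $s_k^i/|g_\ell(\vecx_k)|\le\sqrt{\alpha_k/\theta_\alpha}\to0$, so that $g_\ell(\vecx_k+t_k^i\vecd_k^i)/g_\ell(\vecx_k)\to1$ and the barrier multipliers along the segment agree asymptotically with $\lambda_\ell^k$.

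The main obstacle is the boundedness of the multiplier sequences, and this is where the MFCQ is essential. I would argue by contradiction: set $\gamma_k=1+\sum_\ell|\lambda_\ell^k|+\sum_j|\mu_j^k|$, suppose $\gamma_k\to\infty$ along a subsequence, divide the gradient inequality by $\gamma_k$, and pass to the limit. By continuity of $\nabla f,\nabla g,\nabla h$ and finiteness of $\D$ (so the active direction set may be taken constant), the normalized multipliers converge to $\bar\lambda\ge0$, $\bar\mu$ with $\sum_\ell\bar\lambda_\ell+\sum_j|\bar\mu_j|=1$, where $\bar\lambda_\ell=0$ on every inactive inequality (those $\lambda_\ell^k$ stay bounded) and the $\gamma_k^{-1}\nabla f$ and forcing terms vanish. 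Since the retained directions positively span $T_X(\vecx^*)$, this yields $\sum_{\ell\in I_+(\vecx^*)}\bar\lambda_\ell\nabla g_\ell(\vecx^*)^\top\vecd+\sum_j\bar\mu_j\nabla h_j(\vecx^*)^\top\vecd\ge0$ for all $\vecd\in T_X(\vecx^*)$. Testing against the direction from condition (b) of Definition~\ref{MFCQ} forces every $\bar\lambda_\ell=0$, and then condition (a) forces $\bar\mu=0$, contradicting the normalization. Hence $\{\lambda^k\}$ and $\{\mu^k\}$ are bounded.

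Passing to a convergent subsequence $\lambda^k\to\lambda^*\ge0$, $\mu^k\to\mu^*$, boundedness itself delivers feasibility of $\vecx^*$: a violated external or equality constraint would force the corresponding multiplier to diverge, while the barrier constraints give $g_\ell(\vecx^*)\le0$ by continuity, so $\vecx^*\in\F$. Complementarity $(\lambda^*)^\top g(\vecx^*)=0$ follows because $\lambda_\ell^*$ vanishes on inactive inequalities (barrier multipliers tend to zero as $\rho_k\to0$, external ones are eventually zero) whereas $g_\ell(\vecx^*)=0$ on the active ones, giving \eqref{cmp}. Finally, passing to the limit in the unnormalized gradient inequality along the positively spanning directions gives $\nabla_x L(\vecx^*,\lambda^*,\mu^*)^\top\vecd\ge0$ for every $\vecd\in T_X(\vecx^*)$; since $\vecx-\vecx^*\in T_X(\vecx^*)$ for all $\vecx\in X$ by convexity of $X$, this is precisely \eqref{derdir}. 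Together with \eqref{cmp}, it shows that $\vecx^*$ is a stationary point of Problem~\eqref{P}.
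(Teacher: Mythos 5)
Your proposal follows the same architecture as the paper's proof: every $k\in K$ is an unsuccessful iteration; Proposition~\ref{feasibility} converts infeasible poll points into feasible failures; the mean value theorem yields a gradient inequality along each retained direction; multiplier estimates anchored at $\vecx_k$ are introduced; their boundedness is proved by an MFCQ-based normalization/contradiction argument; and the limit is taken over a subsequence on which $J_k$ and the directions stabilize, with positive spanning extending the inequality to all of $T_X(\vecx^*)$. Two of your deviations are genuine and both work. First, you normalize by $\gamma_k=1+\sum_\ell|\lambda_\ell^k|+\sum_j|\mu_j^k|$, whereas the paper (via Lemma~\ref{lemma1_appendix}) normalizes by the fastest-diverging multiplier; yours is the more standard device and avoids that auxiliary lemma. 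Second, you obtain feasibility of $\vecx^*$ directly from multiplier boundedness: a violated exterior or equality constraint would make $\nu\left(\max\{g_\ell(\vecx_k),0\}/\rho_k\right)^{\nu-1}$ or $\nu\left(|h_j(\vecx_k)|/\rho_k\right)^{\nu-1}$ diverge since $\rho_k\to 0$, while the log constraints give $g_\ell(\vecx^*)\le 0$ by continuity of $g_\ell$ and strict feasibility of the iterates. The paper instead multiplies the gradient inequality by $\rho_k^{\nu-1}$ and invokes MFCQ (b) and then (a); your route is shorter and correct, given that boundedness has already been established.

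The one under-justified step is the evaluation-point discrepancy for the exterior penalty and equality terms. You correctly flag that the gradient is evaluated at an intermediate point $\vecy_k^i$ while the multipliers are anchored at $\vecx_k$, and you resolve this for the log-barrier terms using $\alpha_{k+1}\le(g_{\min})_k^2$; but your wording suggests the remaining terms are handled by continuity alone, and they are not. The maps $\vecx\mapsto\nu\left(\max\{g_\ell(\vecx),0\}/\rho_k\right)^{\nu-1}$ and $\vecx\mapsto\nu\left(|h_j(\vecx)|/\rho_k\right)^{\nu-1}$ carry the factor $\rho_k^{1-\nu}\to+\infty$, so they are not equicontinuous in $k$: one must show $|\lambda_\ell(\vecy_k^i;\rho_k)-\lambda_\ell(\vecx_k;\rho_k)|\to 0$ (and similarly for $\mu_j$) using the \emph{other} half of the update rule, namely $\theta_\alpha\alpha_k=\alpha_{k+1}\le\rho_k^\beta$ for $k\in K$, which gives $\|\vecy_k^i-\vecx_k\|/\rho_k\le\rho_k^{\beta-1}/\theta_\alpha\to 0$ (here $\beta>1$ is essential), combined with the inequality $\left|\max\{a,0\}^{p}-\max\{b,0\}^{p}\right|\le|a-b|^{p}$ for $p=\nu-1\in(0,1]$ (Lemma~\ref{lemma3_appendix}). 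Without this, neither the division by $\gamma_k$ in your contradiction argument nor the final unnormalized limit passage is licensed, since a priori $\lambda_\ell(\vecy_k^i;\rho_k)$ could grow faster than $\gamma_k$. This is exactly the content of the paper's Appendix~\ref{appendix}; with that estimate inserted, your proof is complete.
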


\begin{proof}
First note that, by Theorem \ref{convergence_eps}, %and \ref{convergence_alpha},
$$\lim_{k\to+\infty\atop k\in \K_\rho}\rho_k = 0,\quad \text{ and }\ \lim_{k\to+\infty\atop k\in \K_\rho}\alpha_k = 0.$$

Now, let $\vecx^*$ be any limit point of $\{\vecx_k\}_{k\in \K_\rho}$. Then, there exists a set $\K_\rho^{\tt x} \subseteq \K_\rho$ such that
$$\lim_{k\to+\infty\atop k\in \K_\rho^{\tt x}}\rho_k = 0,\quad\ \lim_{k\to+\infty\atop k\in \K_\rho^{\tt x}}\alpha_k = 0,\quad \text{ and }\ \lim_{k\to+\infty\atop k\in \K_\rho^{\tt x}}\vecx_k = \vecx^*, $$
with $\alpha_{k+1}<\alpha_k$ and $\rho_{k+1}<\rho_k$, for all $k\in\K_\rho^{\tt x}$.
Recall that $\Dd_k = \{\vecd_k^1,\vecd_k^2,\dots,\vecd_k^{|\Dd_k|}\}$.
Then, for all $k\in\K_\rho^{\tt x}$ sufficiently large, $\vecx_k+\alpha_k\vecd_k^i\in \X$ for all $i\in \J_k$. For every $i\in \J_k$, if $g_\ell(\vecx_k+\alpha_k\vecd_k^i)<0$, for all $\ell\in\Gint$, then by the instructions of the algorithm
\begin{equation}\label{uns_iter_1}
Z(\vecx_k+{\alpha}_k \vecd_k^i;\rho_k)>
Z(\vecx_k;\rho_k)-\xi({\alpha}_k).
\end{equation}
Otherwise, i.e. when an index $\ell\in\Gint$ exists such that $g_\ell(\vecx_k+\alpha_k\vecd_k^i)\geq 0$, i.e. $Z(\vecx^k+\alpha_k\vecd_k^i;\rho_k) = +\infty$, %\not\in \intOmegaint$,
Proposition~\ref{feasibility}
allows us to ensure the existence of a scalar
$\hat{\alpha}_k^i\leq \alpha_k$ such that
\begin{equation}
\label{uns_iter} Z(\vecx_k+\hat{\alpha}_k^i \vecd_k^i;\rho_k)>
Z(\vecx_k;\rho_k)-\xi(\hat{\alpha}_k^i).
\end{equation}
Applying the mean value theorem to~\eqref{uns_iter} (or \eqref{uns_iter_1} setting $\hat\alpha_k^i=\alpha_k$), it follows
\begin{equation}
-\xi(\hat{\alpha}_k^i)\leq Z(\vecx_k+\hat{\alpha}_k^i \vecd_k^i;\rho_k)
- Z(\vecx_k;\rho_k) = \hat{\alpha}_k^i \nabla Z(\vecy_k^i;\rho_k)^\top
\vecd_k^i,
\end{equation}
for all $k\in\K_\rho^{\tt x}$ sufficiently large and all $i\in \J_k$, where
$\vecy_k^i=\vecx_k+t_k^i\hat{\alpha}_k^i \vecd_k^i$, with $t_k^i\in
(0,1)$. Thus,
\begin{equation}
\nabla Z(\vecy_k^i;\rho_k)^\top \vecd_k^i \geq
-\frac{\xi(\hat{\alpha}_k^i) }{\hat{\alpha}_k^i}, \quad
\forall i\in \J_k.
\end{equation}
By considering the expression of $Z(\vecx;\rho_k)$, for all $i$ in $\J_k$ and for $k\in\K_\rho^{\tt x}$ sufficiently large
\begin{align}
\nabla Z(\vecy_k^i;\rho_k)^\top \vecd_k^i &= \left[\nabla f(\vecy_k^i) +
\sum_{\ell\in \Gint} \dfrac{\rho_k}{-g_\ell(\vecy_k^i)}\nabla
g_\ell(\vecy_k^i) +
\nu \left(\sum_{\ell\in \Gext}\left(\frac{\max\{g_\ell(\vecy_k^i),0\}}{\rho_k}\right)^{\nu-1}\nabla g_\ell(\vecy_k^i) +\right.\right. \nonumber \\ \label{eq:grad_Z}
& \left. \left. \hspace{-0.8cm}\sum_{j=1}^{p} \left(\frac{|h_j (\vecy_k^i)|}{\rho_k}\right)^{\nu-1} \nabla h_j (\vecy_k^i)\right)\right]^{\top}\vecd_k^i \geq-\frac{\xi(\hat{\alpha}_k^i)}{\hat{\alpha}_k^i}.
\end{align}
Proposition~\ref{prop:epsaccurate_estimate} allows to extract
a further subset of indices $\K_\rho^{\tt
x,D}\subseteq\K_\rho^{\tt x}$ such that the sets of poll directions $\Dd_k$ are equal to a fixed set $\Dd^*$ for all $k\in\K_\rho^{\tt x,D}$. Therefore,
\[
\begin{split}
& \lim_{k\to+\infty\atop k\in \K_\rho^{\tt x,D}}\rho_k = 0,\\
& \lim_{k\to+\infty\atop k\in \K_\rho^{\tt x,D}}\alpha_k = 0,\\
& \lim_{k\to+\infty\atop k\in \K_\rho^{\tt x,D}}\vecx_k = \vecx^*,\\
& \J_k = \J^*,\quad\forall\ k\in\K_\rho^{\tt x,D},\\
& \vecd_k^i = \bar \vecd^i,\quad\forall\ i\in \J^*,\ k\in\K_\rho^{\tt x,D},\\
& \Dd^*=\{\bar \vecd^i\}_{i\in \J^*}.
\end{split}
\]
When $k\in\K_\rho^{\tt x,D}$ is sufficiently large, for all $i\in \J^*$, with $\vecy_k^i=\vecx_k+t_k^i\hat{\alpha}_k^i\bar\vecd^i$, and $t_k^i\in (0,1)$, since $\hat{\alpha}_k^i\leq \alpha_k$, by Theorem~\ref{convergence_eps}, we have that,  $\Disp\lim_{k\to+\infty \atop k\in\K_\rho^{\tt x,D}} \vecy_k^i = \vecx^*$.

Let us define the following approximations to the Lagrange multipliers of each constraint:
\begin{itemize}
\item[-] for $\ell=1,\ldots,m$, set
$\lambda_\ell(\vecx;\rho) = \begin{cases} \dfrac{\rho}{-g_\ell(\vecx)}, & \text { if } \ell\in\Gint\\ \nu \left( \dfrac{\max\{g_\ell(\vecx),0\}}{\rho}\right)^{\nu-1}, & \text { if } \ell\in\Gext \end{cases}$
\item[-] for $j=1,\ldots,p$, set $\mu_j(\vecx;\rho) = \nu\left(\dfrac{ |h_j(\vecx)|}{\rho}\right)^{\nu-1}$.
\end{itemize}
% Since $\Disp \lim_{k\to+\infty \atop k\in \K}\vecx_k = \vecx^*$,
% Assumption~\ref{geometry_ass} and
% Proposition~\ref{epsaccurate_estimate} ensure the existence of
% $\varepsilon^*>0$ such that for $k\in \K$ sufficiently large
% $cone(\D_k)\supseteq \T_\X(\vecx_k,\varepsilon^*) = \T_\X(\vecx^*)$.

% Again, Assumption~\ref{geometry_ass} establishes that $\D_k$ is a
% subset of a finite set $D$.

The sequences $\set{\lambda_\ell(\vecx_k;\rho_k)}_{k\in \K_\rho^{\tt x,D}}$ and $\set{\mu_j(\vecx_k;\rho_k)}_{k\in \K_\rho^{\tt x,D}}$, are bounded (see Appendix \ref{appendix}).
Thus, it is possible to consider $\K_\rho^{\tt
x,D,\lambda}\subseteq\K_\rho^{\tt x,D}$, such that

\begin{eqnarray}
&&\Disp\lim_{k\to+\infty \atop k\in \K_\rho^{\tt x,D,\lambda}}\lambda_\ell(\vecx_k;\rho_k) = \lambda_\ell^*, \qquad \ell=1,\ldots,m\\
&&\Disp\lim_{k\to+\infty \atop k\in \K_\rho^{\tt x,D,\lambda}} \mu_j(\vecx_k;\rho_k) = \mu_j^*, \qquad j=1,\ldots,p.
\end{eqnarray}
Note that, by definition of the multiplier functions, $\lambda_\ell^*=0$ for $\ell\not\in \I_+(\vecx^*)$.

Let us now multiply~\eqref{eq:grad_Z} by $\rho_k^{\nu-1}$ and take the limit for $k\to+\infty, k\in\K_\rho^{\tt x,D,\lambda}$ . Recall that $\nu\in(1,2]$, so $\rho_k^{\nu-1}\to 0$ and
\[
\left(\sum_{\ell\in \Gext}\nu\max\{g_\ell(\vecx^*),0\}^{\nu-1}\nabla g_\ell(\vecx^*)+\sum_{j=1}^{p}\nu |h_j (\vecx^*)|^{\nu-1} \nabla h_j
(\vecx^*)\right)^\top {\bar{\vecd}^i}\geq 0, \quad \forall \bar\vecd^i\in D^*.
\]

From Proposition~\ref{prop:epsaccurate_estimate} and Assumption~\ref{condition:geometry_ass}, since $\varepsilon\in(0,\min\set{\bar{\varepsilon},\varepsilon^*}$], for all $k\in\K_\rho^{\tt x,D,\lambda}$ sufficiently large it holds
\[
\T_\X(\vecx^*) = \T_\X(\vecx_k;\varepsilon) = \cone(\Dd_k\cap \T_\X(\vecx_k;\varepsilon))= \cone(\Dd^*).
\]
Then, for every $\vecd\in \T_\X(\vecx^*)$, there exist nonnegative numbers $\beta_i$ such that
\begin{equation}
\label{direction_comb}
\vecd = \sum_{i\in \J^*} \beta_i \bar\vecd^{i}, \text{with } \bar\vecd^{i}\in \Dd^*.
\end{equation}
Let us recall that, by assumption, $\vecx^*$ satisfies the MFCQ conditions, and let $\vecd$ be the direction satisfying \eqref{mfcq_feas} in point (b) of the MFCQ conditions. Then,
\begin{eqnarray}
\nonumber 0\leq && \sum_{i\in \J^*}\beta_i \left(\sum_{\ell\in \Gext}\nu\max\{g_\ell(\vecx^*),0\}^{\nu-1}\nabla g_\ell(\vecx^*) +\sum_{j=1}^{p}\nu |h_j (\vecx^*)|^{\nu-1} \nabla h_j (\vecx^*)\right)^\top {\bar{\vecd}^i} =% \left(\sum_{\ell\in \Gext}\nu\max\{g_\ell(\vecx^*),0\}^{\nu-1}\nabla g_\ell(\vecx^*))\right)^\top \vecd =
\\
\label{eq:derdir_positive}&& \left(\sum_{\ell\in \Gext}\nu\max\{g_\ell(\vecx^*),0\}^{\nu-1}\nabla g_\ell(\vecx^*)+\sum_{j=1}^{p}\nu |h_j (\vecx^*)|^{\nu-1} \nabla h_j (\vecx^*)\right)^\top \vecd =\\
%&& \sum_{i\in \J^*}\beta_i \left(\sum_{\ell\in \Gext}\nu\max\{g_\ell(\vecx^*),0\}^{\nu-1}\nabla g_\ell(\vecx^*)+\sum_{j=1}^{p}\nu |h_j (\vecx^*)|^{\nu-1} \nabla h_j (\vecx^*)\right)^\top {\bar{\vecd}^i}\geq 0,
%\quad \forall \bar\vecd^i\in D^*.
\nonumber && \left(\sum_{\ell\in \I_+(\vecx^*)\cap \Gext}\nu\max\{g_\ell(\vecx^*),0\}^{\nu-1}\nabla g_\ell(\vecx^*)+\sum_{j=1}^{p}\nu |h_j (\vecx^*)|^{\nu-1} \nabla h_j (\vecx^*)\right)^\top \vecd,
\end{eqnarray}
where the last equality follows by considering that
$\max\{g_\ell(\vecx^*),0\} = 0$, for all
$\ell\in\Gext\setminus\I_+(\vecx^*)$.

Again by (\ref{mfcq_feas}), $\nabla g_\ell(\vecx^*)^T \vecd < 0,$ for all $\ell\in \I_+(\vecx^*)$, and $\nabla h_j(\vecx^*)^\top \vecd = 0$, for all $j$. Then, $\max\{g_\ell(\vecx^*),0\}=0$ for all $\ell\in \I_+(\vecx^*)\cap \Gext$, implying
\begin{eqnarray*}
%&& \left(\sum_{\ell\in \Gext}\nu\max\{g_\ell(\vecx^*),0\}^{\nu-1}\nabla g_\ell(\vecx^*)+\sum_{j=1}^{p}\nu |h_j (\vecx^*)|^{\nu-1} \nabla h_j (\vecx^*)\right)^\top {{\vecd}}=\hspace{2cm}\\
&& \hspace{4cm}\left(\sum_{j=1}^{p}\nu |h_j (\vecx^*)|^{\nu-1} \nabla h_j (\vecx^*)\right)^\top {{\vecd}} \geq 0,\ \ \text{for all } \vecd\in \T_\X(\vecx^*).
\end{eqnarray*}

By (\ref{box_mfcq}), $h_j(\vecx^*) = 0$ for all $j=1,\dots,p$. Therefore, recalling that $g_\ell(\vecx^*)\leq 0$ for all $\ell\in\Gint$, we obtain that point $\vecx^*$ is feasible.

By simple manipulations, inequality~\eqref{eq:grad_Z} can be
rewritten as
\begin{equation}
\begin{aligned}
& \left(\nabla f(\vecy_k^i)+\sum_{\ell=1}^m \nabla g_\ell(\vecy_k^i) \lambda_\ell(\vecx_k;\rho_k)\right. \\
& +\sum_{\ell=1}^m \nabla g_\ell(\vecy_k^i) (\lambda_\ell(\vecy_k^i;\rho_k)-\lambda_\ell(\vecx_k;\rho_k))+\sum_{j=1}^p \nabla h_j (\vecy_k^i) \mu_j(\vecx_k;\rho_k) \\
& \left.+\sum_{j=1}^p \nabla
h_j(\vecy_k^i)(\mu_j(\vecy_k^i;\rho_k)-\mu_j(\vecx_k;\rho_k))\right)^\top
\bar\vecd^i \geq-\frac{\xi(\hat{\alpha}_k^i)}{\hat{\alpha}_k^i}, \quad
\forall i \in \J^*
\end{aligned}
\end{equation}
Taking limits for $k\to +\infty$, $k\in\K_\rho^{\tt x,D,\lambda}$ and
considering~\eqref{eq_ap:lim_lambda} and~\eqref{eq_ap:lim_mu} in Appendix~\ref{appendix}, it follows:
\begin{eqnarray}
\left(\nabla f(\vecx^*) + \sum_{\ell=1}^{m} \nabla g_\ell(\vecx^*)\lambda_\ell^* +
\sum_{j=1}^{p}  \nabla h_j (\vecx^*)\mu_j^*\right)^\top{\bar\vecd^i} \geq 0,
\quad \forall i\in \J^*.\label{derdirge0}
\end{eqnarray}
Again, by \eqref{direction_comb} and \eqref{derdirge0}, for all $\vecd\in \T_\X(\vecx^*)$,
\begin{eqnarray*}
&& \left(\nabla f(\vecx^*) + \sum_{\ell=1}^{m} \nabla g_\ell(\vecx^*)\lambda_\ell^* + \sum_{j=1}^{p} \nabla h_j (\vecx^*) \mu_j^*\right)^\top \vecd = \\
&&\qquad \sum_{i\in \J^*}\beta_i\left(\nabla f(\vecx^*) +
\sum_{\ell=1}^{m} \nabla g_\ell(\vecx^*)\lambda_\ell^* + \sum_{j=1}^{p} \nabla
h_j (\vecx^*)\mu_j^*\right)^\top{ \bar\vecd^{i}} \geq 0.
\end{eqnarray*}
Since $\vecx^*$ is feasible and considering that, by definition of $\lambda_\ell^*$ for {$\ell\notin \I_+(\vecx^*)$}, $\lambda_\ell^*\, g_\ell (\vecx^*)=0$, for all $\ell=1,\ldots,m$, $\vecx^*$ is a KKT point for problem \eqref{P}, thus concluding the proof.
\end{proof}

\section{Implementation details of LOG-DS}\label{sec:implementation}
This section presents a practical implementation for LOG-DS, based on the original implementation of SID-PSM~\cite{Custodio_Rocha_Vicente_2010,Custodio_Vicente_2007}.

\subsection{LOG-DS vs SID-PSM}
\label{subsec:implementation_details}

LOG-DS enhances SID-PSM with the ability of handling general constraints using a penalty-interior point method. Thus, the original structure and algorithmic options of SID-PSM implementation were considered. This subsection provides a general overview of the main features of SID-PSM, highlighting the differences with LOG-DS. 
%For more details on SID-PSM, the original references~\cite{Custodio_Rocha_Vicente_2010,Custodio_Vicente_2007} could be used.

SID-PSM{~\cite{Custodio_Rocha_Vicente_2010,Custodio_Vicente_2007}} is a GPS method that leverages quadratic polynomial models and simplex gradients to improve the numerical performance. Each iteration of SID-PSM consists in an optional Search Step followed by a Poll Step. 

During the Search Step, SID-PSM constructs quadratic polynomial models around the current iterate using previously evaluated points, sampled within a ball whose radius is proportional to the step-size parameter. A minimum threshold of points is required to build the models. Thus, the Search Step is skipped in the first iterations. Depending on the number of available points, the model may be underdetermined, determined, or overdetermined. The model is minimized within a trust region, and the resulting candidate, if feasible, is evaluated in the true objective function; otherwise, its value is set to infinity. If a new feasible point with a better objective value is found, the Poll Step is skipped and the process repeats from the new point. Otherwise, the algorithm proceeds to the Poll Step.

%In the case of SID-PSM, the Search Step is based on the minimization of quadratic polynomial models, built around the current iterate by reusing sets of points evaluated during the course of the optimization process. These points are selected in a ball, whose radius is directly related to a step-size parameter. Thus, the Search Step is skipped in the first iterations, where the number of points already evaluated in the Poll Step is small. When the required threshold is reached, SID-PSM builds a quadratic polynomial model for the objective function, that could be underdetermined, determined, or overdetermined, depending on the number of points available. This model is then minimized in a trust region and the resulting point, if feasible, is evaluated in the true objective function (otherwise, the value infinity is assigned to it). If the Search Step was able to produce a new iterate, meaning it generated a new feasible point with a better objective function value, then the Poll Step is skipped and a new model is built for the objective function centered at the new iterate, repeating the procedure. Otherwise, SID-PSM enters the Poll Step. 

In the Poll Step, a local search is performed by evaluating a set of positive spanning directions, scaled by the step-size parameter, in an opportunistic manner. Therefore, the testing order can affect performance, as not all directions need to be evaluated. Previously sampled points are used to compute a \textit{simplex gradient}~\cite{Bortz_Kelley_1998}, which serves as an ascent indicator to reorder the poll directions. In a simplified way, a simplex gradient can be regarded as the gradient of a linear interpolation model. Poll directions are reordered according to the largest angle made with this ascent indicator (see~\cite{Custodio_Vicente_2007}). The evaluation of the reordered poll points is made only on the feasible ones, setting the function value to infinity in case of infeasibility.
{If neither the Search Step nor the Poll Step yield a feasible point with a better objective value, SID-PSM reduces the step-size parameter.}

%If no feasible point with a better function value was found both at the Search and Poll Steps, SID-PSM decreases the step-size parameter.}

%{\tiny \red
%LOG-DS preserves this Search--Poll organization and the reuse of stored evaluations but replaces the extreme barrier with the penalty-interior point method, applying all decisions to $Z(\cdot;\rho)$ and allowing controlled violation of nonlinear constraints; acceptance uses a sufficient-decrease rule of the form $$Z(\vecx_{k+1};\rho_k) \leq Z(\vecx_k;\rho_k) - \gamma \alpha_k^2,$$ and poll ordering employs a simplex gradient computed from merit-function values. In the Search step, LOG-DS builds separate quadratic models for the objective and each constraint from previously evaluated points (feasibility with respect to nonlinear constraints is not required for model construction), combines these models analytically to obtain a surrogate of $Z(\cdot,\rho)$, without spending extra evaluations solely for model construction; the penalty–barrier parameter is updated only at unsuccessful iterations according to prescribed criteria.
%With the process guided through $Z(\cdot,\rho)$, the overall GPS mechanics are maintained, with the only exception that, using a sufficient decrease condition, the algorithm is not tied to lie on a rational lattice. We exploit the consequent flexibility within the update of the step-size parameters. Further details can be found in the next section.
%}

The main difference between LOG-DS and SID-PSM is the use of a merit function to address constraints, instead of an extreme barrier approach. In SID-PSM, only feasible points are evaluated, being the function value set equal to $+\infty$ for infeasible ones. LOG-DS allows the violation of some constraints. The merit function also replaces the original objective function throughout the different algorithmic steps. In the Search Step, quadratic polynomial models are built for both the objective and constraints functions and they are analytically combined to produce a model of the merit function.

The sets of points used in model computation do not require feasibility regarding the nonlinear constraints, always resulting from previous evaluations of the merit function. No function evaluations are spent solely for the purpose of model building. Once more, depending on the number of points available, minimum Frobenius norm models, quadratic interpolation, or regression approaches can be used~\cite{Custodio_Rocha_Vicente_2010} to compute the model coefficients.

The original SID-PSM only requires a simple decrease for the
acceptance of new points, whereas in LOG-DS points are accepted if
they satisfy the sufficient decrease condition
$$Z(\vecx_{k+1};\rho_k) \leq Z(\vecx_k;\rho_k)-\gamma\alpha_k^2,$$
conferring flexibility to the updates of the step-size, since points are no longer constrained to an implicit mesh. The algorithm proceeds with an opportunistic Poll Step, accepting the first tentative point satisfying the sufficient decrease condition. Before initiating the polling procedure, previously evaluated points are again used to build a simplex gradient, which is used as an ascent indicator. In LOG-DS, the simplex gradient is built on the merit function values, while the original SID-PSM exploits the values of the objective function.

\subsection{Implementation choices}
A numerical implementation is always dependent on algorithmic choices and parameters. This subsection details the options taken for the computational implementation of LOG-DS,  with the exception of the poll directions, that are detailed in Subsection~\ref{sec:linear_comparison}.

The initialization requires setting the initial step-size, $\alpha_0 = 1$, the initial penalty-barrier parameter, $\rho_0 = 0.1$, the smoothing exponent for the exterior penalty, $\nu = 2$, the penalty-barrier update parameter, $\theta_{\rho} = 10^{-2}$, and the exponent used in the penalty-barrier parameter updating criterion $\beta = 1 + 10^{-9}$. In the sufficient decrease condition, $\gamma$ is set equal to $10^{-3}$.

Regarding the update of the step-size, the contraction  parameter is defined as $\theta_\alpha = 0.5$. %For the step-size expansion parameter $\phi$, it is set equal to $1$, for general successful iterations, keeping the step-size constant. However, when a successful point is found in the Search Step, the update could change. In particular, at iteration $k$,
{The step-size expansion parameter $\phi$ is set to $1$, so that the step-size is
unchanged on successful Poll Steps. A more elaborate update rule is however used in the Search Step. Recall that, in the Search Step of iteration
$k$,}
a surrogate model of $Z(\cdot,\rho_k)$ is minimized within a trust region of radius $r_k = 2\alpha_k$, centered at $\vecx_k$. Let $\vecs_k$ denote the corresponding minimizer. If 
\[ Z(\vecs_k;\rho_k)\leq Z(\vecx_k;\rho_k) - \gamma \alpha_k^2 \quad \text{ and } \quad \|\vecs_k - \vecx_k\| > \alpha_k,\]
then we set $$\alpha_{k+1} = \dfrac{1}{\sqrt[n]{\theta_{\alpha}}}\alpha_k,$$ where $n$ is the problem dimension. This scaling reflects the observation that reducing the step-size typically requires $\mathcal{O}(n)$ function evaluations. Since the step-size reduction is necessary for updating the penalty-barrier parameter and improving the solution accuracy, expanding the step-size incurs a hidden cost of function evaluations that grows with problem dimension. The proposed update heuristic procedure mitigates this cost, though it may slow down progress when the iterate is far from the solution. Notably, expansion occurs only when the successful trial point is far enough from the current iterate (i.e. when $\|\vecs_k - \vecx_k\|>\alpha_k$), preventing unnecessary updates during the final optimization phase, where surrogate models are well-known to be highly effective.

%To conclude this subsection, some constants used in the update criterion of the penalty-barrier parameter and in the merit function are introduced.
At any unsuccessful iteration $k$, the update test is relaxed by scaling both terms with positive constants. The penalty–barrier parameter $\rho_k$ is reduced whenever
$$\alpha_{k+1} \leq \min\{c_{\rho} \rho_{k}^{\beta}, c_g (\gmink)^2\},$$
where $c_\rho = 10^{2}$ and $c_g = 10^{10}$. For $\gmink$ on the order of $10^{-5}$, one has $c_g (\gmink)^2 \approx 1$, so the influence of proximity to $\Omegaint$ is retained, rather than masked by the scaling. Although one might expect such proximity to be attainable only when $\alpha_k$ is small, in practice the quadratic surrogate models used in the Search Step enable this precision even for larger step-sizes.

To balance the contributions within the merit function, the exterior penalty term is scaled by a constant $c_{\tt ext}$:
$$Z(\vecx;\rho_k) = f(\vecx) - \rho_k \sum_{\ell\in\Gint} \log(-g_\ell(\vecx)) + \dfrac{c_{\tt ext}}{\rho_k}\left(\sum_{\ell \in \Gext} \max\{g_\ell(\vecx),0\}^{\nu} + \sum_{j=1}^{p} |h_j(\vecx)|^{\nu}\right).$$
The constant $c_{\tt ext}$ is problem dependent and it is defined as:
$$ c_{\tt ext} = \begin{cases}
1 &\text{if } f(\vecx_0)=0,\\ \max\{1,10^{\lfloor\log_{10}(|f(\vecx_0|)\rfloor}\} & \text{otherwise},
\end{cases}$$
which matches the order of magnitude of the objective function at the initial point $\vecx_0$, so that constraint violations are given at least a comparable weight and the algorithm is driven toward feasibility as $\rho_k$ decreases.

Note that the three constants $c_\rho$, $c_g$, and $c_{\tt ext}$ do not affect the theoretical properties of the algorithm.

\section{Numerical experiments}
\label{numerical_experiments}
This section is dedicated to the numerical experiments and performance evaluation of the proposed mixed penalty-logarithmic barrier derivative-free optimization algorithm, LOG-DS, on a collection of test problems available in the literature.

The test set used in~\cite{Brilli_Liuzzi_Lucidi_2021}, part of the CUTEst collection~\cite{Gould_Orban_Toint_2015}, was considered. Specifically, problems were included for which the number of variables does not exceed $50$ and at least one inequality constraint is strictly satisfied by the initialization provided, ensuring $\vecx_0\in \intOmegaint$. Table~\ref{test_set} details the test set, providing the name, the number of variables, $n$, the number of inequality constraints, $m$, the number of inequality constraints treated by the logarithmic barrier $\bar{m}=|\Gint|$, and the number of nonlinear equalities, $p$, for each problem. Problems without equality constraints, for which the provided initialization is feasible, are marked in bold. Additional information can be found in~\cite{Brilli_Liuzzi_Lucidi_2021,Gould_Orban_Toint_2015}.

{\renewcommand{\arraystretch}{1.25}
\begin{table}[htp!]
\begin{center}\scriptsize
\begin{minipage}{0.48\textwidth}
\begin{center}
\begin{tabular}{|l|c|c|c|c|}\hline
Problem &  $n$ &  $m$ & $\bar{m}$ & $p$ \\\hline
ANTWERP & 27 &  10 &   2 &   8 \\
DEMBO7 & 16 &  21 &  16 &   0 \\
ERRINBAR & 18 &   9 &   1 &   8 \\
\textbf{HS117} & 15 &   5 &   5 &   0 \\
\textbf{HS118} & 15 &  29 &  28 &   0 \\
LAUNCH & 25 &  29 &  20 &   9 \\
LOADBAL & 31 &  31 &  20 &  11 \\
MAKELA4 & 21 &  40 &  20 &   0 \\
MESH & 33 &  48 &  17 &  24 \\
OPTPRLOC & 30 &  30 &  28 &   0 \\
RES & 20 &  14 &   2 &  12 \\
SYNTHES2 & 11 &  15 &   1 &   1 \\
SYNTHES3 & 17 &  23 &   1 &   2 \\
TENBARS1 & 18 &   9 &   1 &   8 \\
TENBARS4 & 18 &   9 &   1 &   8 \\
TRUSPYR1 & 11 &   4 &   1 &   3 \\
TRUSPYR2 & 11 &  11 &   8 &   3 \\
\textbf{HS12} &  2 &   1 &   1 &   0 \\
\textbf{HS13} &  2 &   1 &   1 &   0 \\
\textbf{HS16} &  2 &   2 &   2 &   0 \\
HS19 &  2 &   2 &   1 &   0 \\
\textbf{HS20} &  2 &   3 &   3 &   0 \\
\textbf{HS21} &  2 &   1 &   1 &   0 \\
HS23 &  2 &   5 &   4 &   0 \\
\textbf{HS30} &  3 &   1 &   1 &   0 \\
\textbf{HS43} &  4 &   3 &   3 &   0 \\
\textbf{HS65} &  3 &   1 &   1 &   0 \\
HS74 &  4 &   5 &   2 &   3 \\
HS75 &  4 &   5 &   2 &   3 \\
HS83 &  5 &   6 &   5 &   0 \\
HS95 &  6 &   4 &   3 &   0 \\
HS96 &  6 &   4 &   3 &   0 \\
HS97 &  6 &   4 &   2 &   0 \\
HS98 &  6 &   4 &   2 &   0 \\
\textbf{HS100} &  7 &   4 &   4 &   0 \\
HS101 &  7 &   6 &   2 &   0 \\
HS104 &  8 &   6 &   3 &   0 \\
\textbf{HS105} &  8 &   1 &   1 &   0 \\
\textbf{HS113} & 10 &   8 &   8 &   0 \\
HS114 & 10 &  11 &   8 &   3 \\
HS116 & 13 &  15 &  10 &   0 \\
S365 &  7 &   5 &   2 &   0 \\
ALLINQP & 24 &  18 &   9 &   3 \\
BLOCKQP1 & 35 &  16 &   1 &  15 \\
BLOCKQP2 & 35 &  16 &   1 &  15 \\
BLOCKQP3 & 35 &  16 &   1 &  15 \\
BLOCKQP4 & 35 &  16 &   1 &  15 \\
BLOCKQP5 & 35 &  16 &   1 &  15 \\\hline
\end{tabular}
\end{center}
\end{minipage}
\begin{minipage}{0.49\textwidth}
\begin{center}
\begin{tabular}{|l|c|c|c|c|}\hline
Problem &  $n$ & $m$ & $\bar{m}$ & $p$\\\hline
CAMSHAPE & 30 &  94 &  90 &   0 \\
CAR2 & 21 &  21 &   5 &  16 \\
CHARDIS1 & 28 &  14 &  13 &   0 \\
EG3 & 31 &  90 &  60 &   1 \\
GAUSSELM & 29 &  36 &  11 &  14 \\
\textbf{GPP} & 30 &  58 &  58 &   0 \\
HADAMARD & 37 &  93 &  36 &  21 \\
HANGING & 15 &  12 &   8 &   0 \\
JANNSON3 & 30 &   3 &   2 &   1 \\
\textbf{JANNSON4} & 30 &   2 &   2 &   0 \\
KISSING & 37 &  78 &  32 &  12 \\
KISSING1 & 33 & 144 & 113 &   0 \\
KISSING2 & 33 & 144 & 113 &   0 \\
LIPPERT1 & 41 &  80 &  64 &  16 \\
LIPPERT2 & 41 &  80 &  64 &  16 \\
\textbf{LUKVLI1} & 30 &  28 &  28 &   0 \\
LUKVLI10 & 30 &  28 &  14 &   0 \\
LUKVLI11 & 30 &  18 &   3 &   0 \\
LUKVLI12 & 30 &  21 &   6 &   0 \\
LUKVLI13 & 30 &  18 &   3 &   0 \\
\textbf{LUKVLI14} & 30 &  18 &  18 &   0 \\
LUKVLI15 & 30 &  21 &   7 &   0 \\
LUKVLI16 & 30 &  21 &  13 &   0 \\
\textbf{LUKVLI17} & 30 &  21 &  21 &   0 \\
\textbf{LUKVLI18} & 30 &  21 &  21 &   0 \\
LUKVLI2 & 30 &  14 &   7 &   0 \\
\textbf{LUKVLI3} & 30 &   2 &   2 &   0 \\
LUKVLI4 & 30 &  14 &   4 &   0 \\
\textbf{LUKVLI6} & 31 &  15 &  15 &   0 \\
LUKVLI8 & 30 &  28 &  14 &   0 \\
\textbf{LUKVLI9} & 30 &   6 &   6 &   0 \\
MANNE & 29 &  20 &  10 &   0 \\
\textbf{MOSARQP1} & 36 &  10 &  10 &   0 \\
\textbf{MOSARQP2} & 36 &  10 &  10 &   0 \\
\textbf{NGONE} & 29 & 134 & 106 &   0 \\
\textbf{NUFFIELD} & 38 & 138 &  28 &   0 \\
OPTMASS & 36 &  30 &   6 &  24 \\
POLYGON & 28 & 119 &  94 &   0 \\
POWELL20 & 30 &  30 &  15 &   0 \\
READING4 & 30 &  60 &  30 &   0 \\
SINROSNB & 30 &  58 &  29 &   0 \\
\textbf{SVANBERG} & 30 &  30 &  30 &   0 \\
VANDERM1 & 30 &  59 &  29 &  30 \\
VANDERM2 & 30 &  59 &  29 &  30 \\
VANDERM3 & 30 &  59 &  29 &  30 \\
VANDERM4 & 30 &  59 &  29 &  30 \\
\textbf{YAO} & 30 &  30 &   1 &   0 \\
ZIGZAG & 28 &  30 &   5 &  20 \\\hline
\end{tabular}
\end{center}
\end{minipage}
\end{center}
\caption{Test set selected from the CUTEst collection. Parameters $n$, $m$, $\bar{m}$, and $p$ denote, respectively, the number of variables, of inequality constraints, of inequality constraints treated by the logarithmic barrier, and of equality constraints for the given problem.}
\label{test_set}
\end{table}}

\subsection{Performance and data profiles}

Numerical experiments will be analyzed using performance~\cite{Dolan_More_2002} and data~\cite{More_Wild_2009} profiles. To provide a brief overview of these tools, consider a set of solvers $\mathcal{S}$ and a set of problems $\mathcal{P}$. Let $t_{p,s}$ represent the number of function evaluations required by solver $s$ to satisfy the convergence test adopted for problem $p$.

For an accuracy $\tau=10^{-k}$, where $k\in\{1,3,5\}$, the following convergence test was adopted:
\begin{equation}
\label{eq:convergence_test}
f_M - f(\vecx) \geq (1-\tau)(f_M-f_L),
\end{equation}
where $f_M$ represents the objective function value of the worst feasible point determined by all solvers for problem $p$, and $f_L$ is the best objective function value obtained by all solvers, corresponding to a feasible point of problem $p$.

The convergence test given by \eqref{eq:convergence_test} requires a significant reduction in the objective function value by comparison with the worst feasible point $f_M$. For profiles computation, the objective function value is set to infinity at points that violate the feasibility condition, defined by $c(\vecx)>\epsilon_{\tt viol}$, {where $\epsilon_{\tt viol}$ takes the values $\{10^{-4}, 10^{-6}, 10^{-8}\}$ in the experiments below}, and
$$c(\vecx) = {\sum_{l=1}^m \max\{g_l(\vecx),0\}} + \sum_{j=1}^p |h_j(\vecx)|.$$

The performance of solver $s\in\mathcal{S}$ is measured by the fraction of problems in which the performance ratio is at most $\alpha$, given by:
\begin{equation*}
\rho_s(\alpha) =\dfrac{1}{|P|}\left|\left\{p \in P\mid
\dfrac{t_{p, s}}{\min \left\{t_{p, s^{\prime}}: s^{\prime} \in
S\right\}} \leq \alpha\right\}\right|.
\end{equation*}
A performance profile provides an overview of how well a solver performs across a set of optimization problems. Particularly relevant is the value $\rho_s(1)$, that reflects the efficiency of the solver, i.e., the percentage of problems for which the algorithm performs the best. Robustness, as the percentage of problems that the algorithm is able to solve, can be perceived for high values of $\alpha$.

Data profiles focus on the behavior of the algorithm during the optimization process. A data profile measures the percentage of problems that can be solved (given the accuracy $\tau$) with $\kappa$ estimates of simplex gradients and it is defined by:
\begin{equation*}
d_s(\kappa) =\dfrac{1}{|P|}\left|\left\{p \in P\mid t_{p, s} \leq \kappa\left(n_p+1\right)\right\}\right|,
\end{equation*}
where $n_p$ represents the dimension of problem $p$.

\subsection{Results and analysis}
\label{subsec:cutest}
This subsection aims to demonstrate the good numerical performance
of the LOG-DS algorithm, when compared against SID-PSM and
other state-of-the-art solvers.

\subsubsection{Strategies for addressing linear constraints}\label{sec:linear_comparison}
In this subsection, our focus lies on evaluating the performance
of LOG-DS using two distinct approaches for managing linear
inequality constraints, other than bounds. The first treats each linear inequality constraint (other
than bounds) as a general inequality, i.e. addressing it
within the proposed penalty-barrier approach. The second approach
consists on the use of an extreme barrier method, adjusting the
directions{,} so that Assumption~\ref{condition:geometry_ass}
is satisfied for the linear constraints (including
bounds).

It is important to understand the relevance of comparing
the two strategies. The logarithmic barrier approach is well known
for handling linear constraints very efficiently, especially in
the presence of a large number thereof. Moreover, conforming the
directions to the nearby linear constraints might affect the
geometry of the generated points, impacting the quality of the
surrogate models built to improve the performance of the
algorithm. Using the penalty approach allows us to keep using the
coordinate directions, which are known to have good geometry for
building linear models. Any other orthonormal basis could be used,
though the coordinate directions additionally conform to bound
constraints on the variables, so that we can treat them separately
from the penalty approach.

The {works} by Lucidi et al.~\cite{Lucidi_Sciandrone_Tseng_2002} and
Lewis and Torczon~\cite{Lewis_Torczon_2000} propose methods for
computing directions conforming to linear inequality constraints
but do not consider degeneracy. Abramson et
al.~\cite{Abramson_Brezhneva_DennisJr_Pingel_2008} provide a
detailed algorithm for generating an adequate set of directions, regardless of whether the constraints are
degenerate or not.

In order to use Definition \ref{eps_active_set}, of $\varepsilon$-active constraints, a preliminary scaling of the constraints is assumed. For this purpose, the $i$-th constraint is multiplied by $\|\bvec{a}_i\|^{-1}$, since the vectors $\bvec{a}_i$, $i\in \I_{\X}(\vecx_k)$, are not null (note that $\|\bvec{a}_i\|\neq 0$ for all $i\in \I_{\X}(\vecx_k)$). Therefore, we consider

\begin{equation}
\overline{\bvec{a}}_i=\dfrac{\bvec{a}_i}{\|\bvec{a}_i\|},\quad
\overline{b}_i=\dfrac{b_i}{\|\bvec{a}_i\|},\ i\in
\I_{\X}(\vecx_k). \label{scaling}
\end{equation}
The $\varepsilon$-active index set is computed using the matrix
$\overline{\mA}$, a scaled version of the matrix $\mA$, and the
vector $\bvec{\overline{b}}$. Consequently,
$\|\overline{\bvec{a}}_i\|=1$ for all $i\in \I_\X(\vecx_k)$
and $\X=\{\vecx\in\mathbb{R}^n\mid \mA\vecx\leq
\bvec{b}\}=\{\vecx\in\mathbb{R}^n\mid \overline{\mA}\vecx\leq
\overline{\bvec{b}}\}$.

To compute the set of directions $\Dd_k$ that conforms to the geometry of the nearby constraints, the algorithm proposed in \cite[Alg. 4.4.2]{Abramson_Brezhneva_DennisJr_Pingel_2008} is used. The latter is divided into two parts: the first constructs the index set corresponding to the $\varepsilon$-active non-redundant constraints, and the second computes the set of directions $\Dd_k$, which include the generators of the cone $\T_\X(\vecx_k,\varepsilon)$. For identifying a constraint as being $\varepsilon$-active, $\varepsilon=10^{-5}$ was considered.  When linear constraints (other than bounds) are included in the merit function,  $\Dd_k = [\bvec{1}\; -\bvec{1}\;\quad \mathbb{I}_n\; -\mathbb{I}_n]$ was used as the set of poll directions for every $k$, where $\mathbb{I}_n$ is the set of columns of the identity matrix $\bvec{I}_n$ and $\bvec{1}$ stands for a vector of $n$ ones. This corresponds to the default set of poll directions in SID-PSM.

Figure~\ref{GSS_ABDP_vs_cord_all_problems_2k} depicts the
performance of LOG-DS using the two strategies described before,
considering a maximum number of $2000$ function evaluations, a
minimum step-size tolerance equal to $10^{-8}$, and a maximum feasibility violation $\epsilon_{\tt viol} = 10^{-4}$.

\begin{figure}[ht!]
\centering
\includegraphics[scale=0.4]{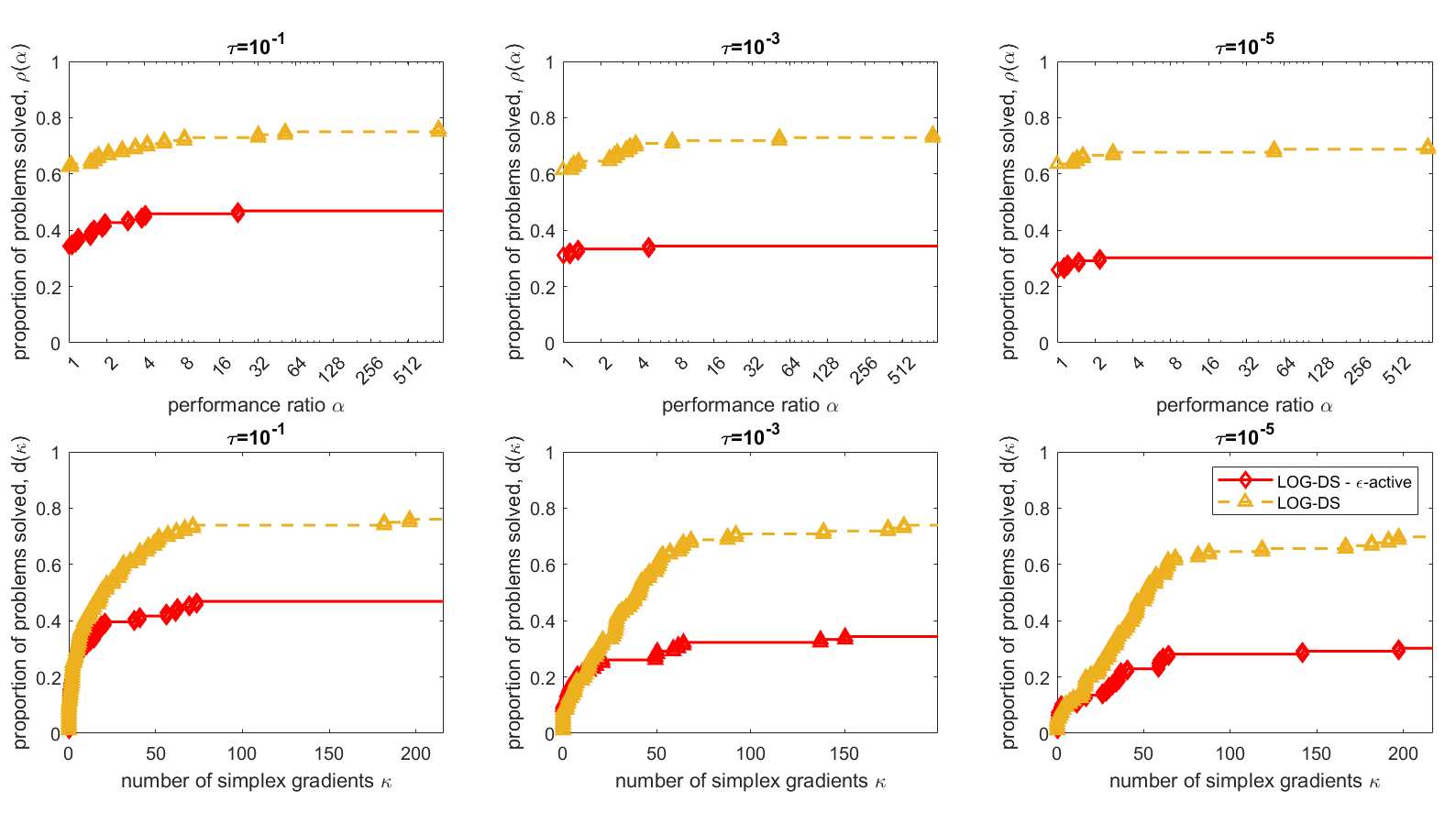}
\caption{Performance (on top) and data (on bottom) profiles comparing LOG-DS using two different approaches to address linear inequality constraints.} \label{GSS_ABDP_vs_cord_all_problems_2k}
\end{figure}

%This experiment involves considering $\X$ with general linear inequality constraints and handling them using the approach proposed in~\cite{Abramson_Brezhneva_DennisJr_Pingel_2008}, and considering $\X$ only with bounds and treating each general linear inequality constraint through the penalty approach.

%As we can see, the performance of the LOG-DS algorithm improves when considering the coordinate directions plus the unit vector and its symmetric.
As it can be seen, the performance of LOG-DS algorithm when addressing the linear inequality constraints within the penalty approach outperforms the competing strategy, addressing the linear constraints directly. %Additionally, it is a simpler strategy, as we do not need to worry about the parameter used to define a constraint as active or not.
Therefore, in the rest of the work, the experiments will be carried out using the winning strategy. Note that the significant difference in the performance might be due to the specific choice of the tested problems and/or the specific strategy used to conform the directions to the linear constraints, rather than to a flaw in the approach by itself.

\subsubsection{Comparison with the original SID-PSM algorithm}
In the present work, an alternative strategy is proposed to address constraints within the SID-PSM algorithm. Thus, the discussion begins by illustrating that the use of a mixed penalty-logarithmic barrier is competitive against the extreme barrier approach. The latter is typically used for problems without equality constraints and for which a feasible point is given as initialization. So, a subset of problems satisfying these conditions was selected, consisting of a total of $28$ instances, highlighted in Table~\ref{test_set}.

Figure~\ref{log_vs_eb_2k} presents the comparison between LOG-DS, which exploits the mixed penalty-logarithmic barrier, and the original SID-PSM, which employs an extreme barrier approach. The default values of SID-PSM were considered for both algorithms, allowing a maximum number of $2000$ function evaluations, a
minimum step-size equal to $10^{-8}$, and a maximum feasibility violation $\epsilon_{\tt viol} = 10^{-4}$.
\begin{figure}[ht!]
\centering
\hspace*{-2cm}\includegraphics[scale=0.4]{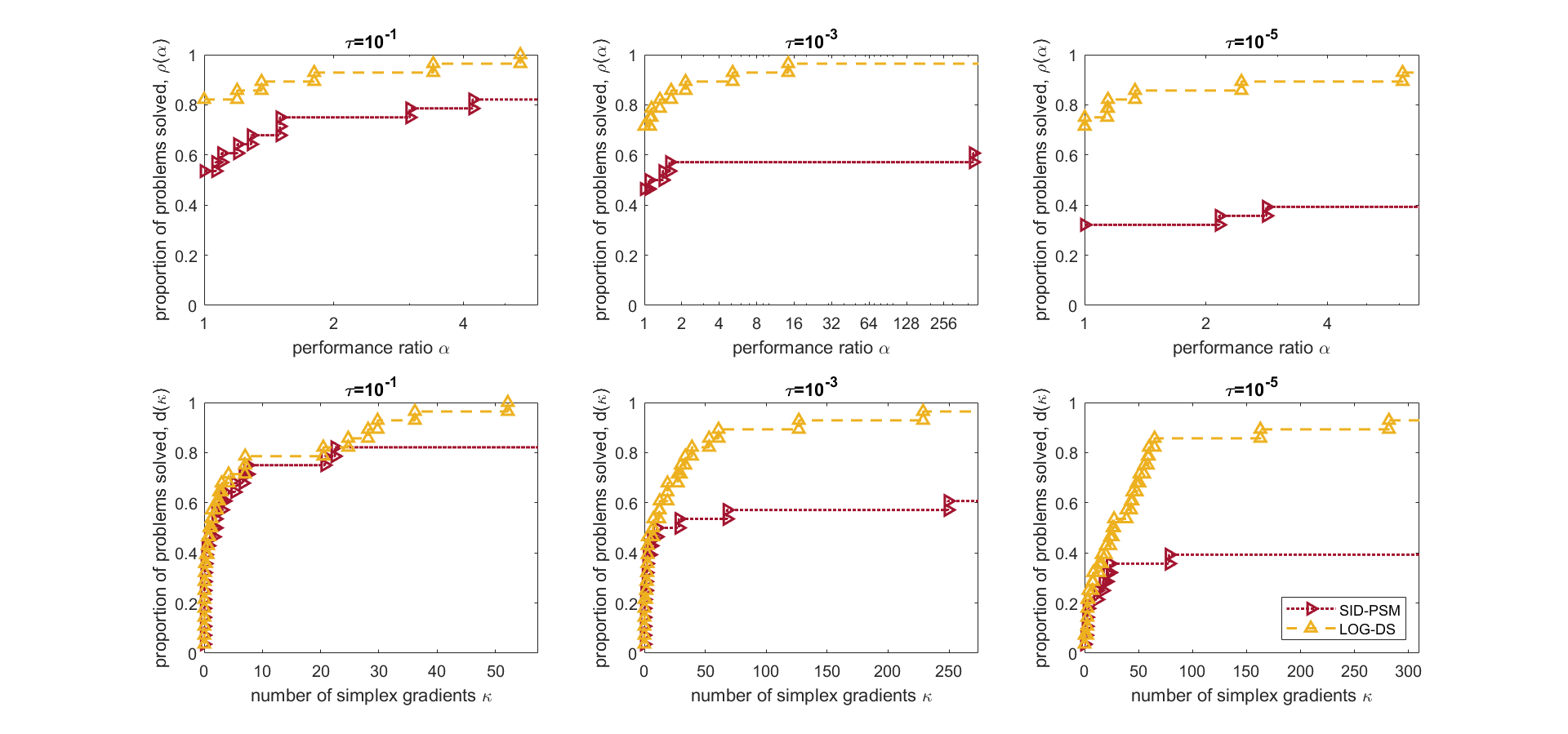}
\caption{Performance (on top) and data (on bottom) profiles comparing LOG-DS and SID-PSM.} \label{log_vs_eb_2k}
\end{figure}

As Figure~\ref{log_vs_eb_2k} shows, LOG-DS presents a better performance than SID-PSM, especially when a higher precision is considered. Furthermore, the possibility of initializing LOG-DS with infeasible points{, regarding some of the constraints,} allows to handle a wider class of practical problems.

\subsubsection{Comparison with state-of-the-art solvers}
This subsection focuses on comparing LOG-DS against state-of-the-art derivative-free optimization solvers that are able to address general nonlinear constraints. Comparisons were made with MADS~\cite{Audet_DennisJr_2006}, implemented in the well-known NOMAD package~\cite{Audet_LeDigabel_Montplaisir_Tribes_2022}(version 4), which can be freely obtained at~\url{https://www.gerad.ca/en/software/nomad}. Additionally, the %the trust-region derivative-free algorithm NOWPAC~\cite{augustin2015}  this software package is available at \url{https://bitbucket.org/fmaugust/nowpac/src/NOWPACv3.5/} and the
X-LOG-DFL algorithm~\cite{Brilli_Liuzzi_Lucidi_2021}, available through the DFL library as the LOGDFL package at~\url{https://github.com/DerivativeFreeLibrary/LOGDFL}, was also tested. Comparison with LOG-DFL~\cite{Brilli_Liuzzi_Lucidi_2021} is particularly relevant since it uses the same merit function of LOG-DS. Default settings were considered for all solvers, with a maximum budget of $2000$ function evaluations and a maximum feasibility violation $\epsilon_{viol}=10^{-4}$. 

The first numerical profiles are reported in Figure~\ref{all_and_feas_2k}, where it can be observed that LOG-DS presents the best performance, for any of the three precision levels considered, both in terms of efficiency and robustness, across the different computational budgets. %, particularly when all methods find a feasible solution (Figure~\ref{feas_problems_2k}).
%If all the problems are considered (Figure~\ref{all_problems_2k}), LOG-DS remains the best solver in terms of efficiency and robustness, being similar to the other solvers in terms of robustness at low precision levels.

%Based on the information presented in Figure~\ref{all_and_feas_2k}, it can be inferred that LOG-DS stands out as the most efficient and robust solver, making it the top-performing choice.

%%%%%%%%%%%% 2k function evaluations
\begin{figure}[ht!]
\centering %\subfigure[Results on the complete collection.]
{\label{all_problems_2k}\includegraphics[scale=0.4]{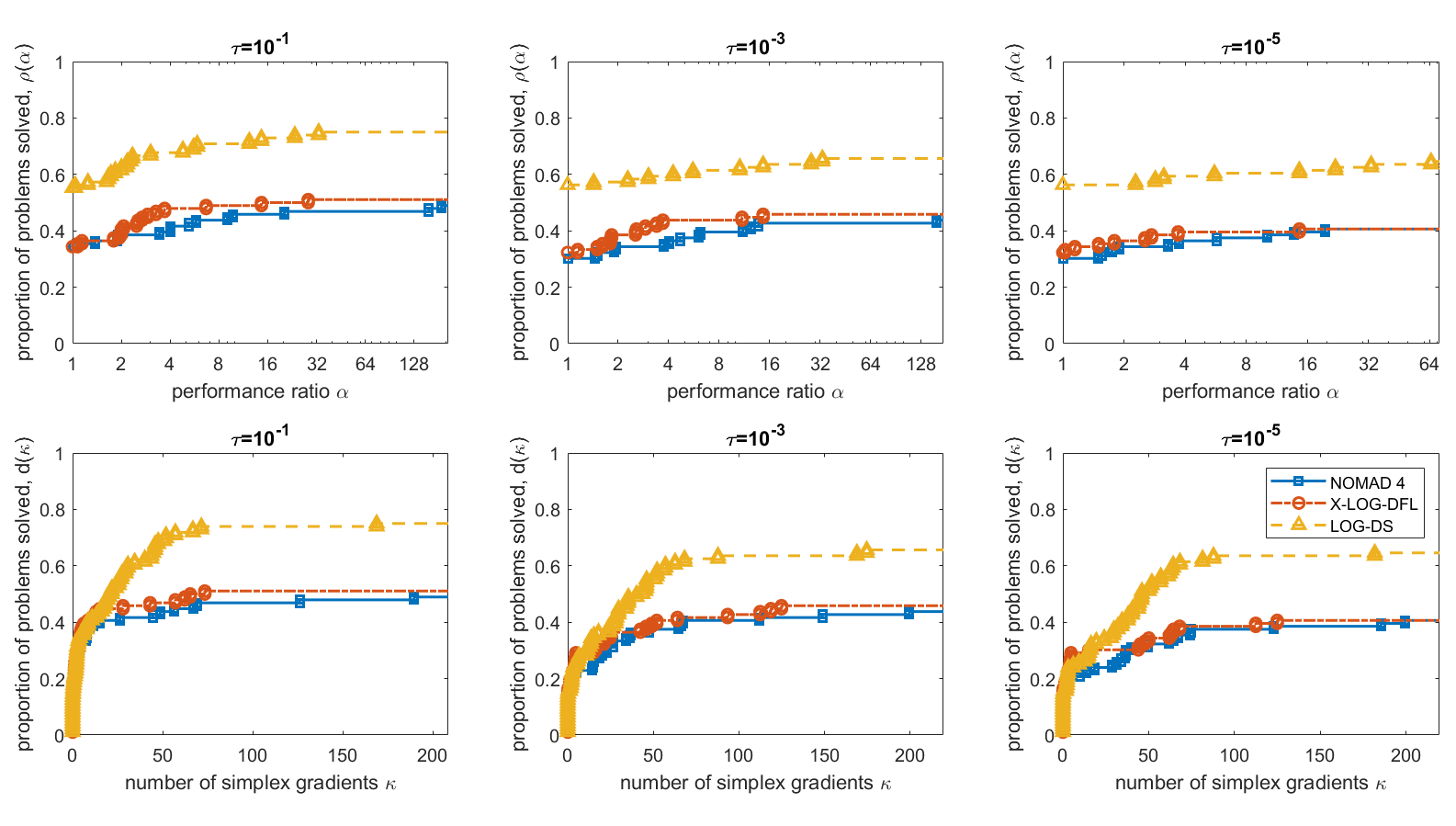}}
%\subfigure[Results on problems where all methods found a feasible solution.]{\label{feas_problems_2k}\includegraphics[scale=0.6]{}}
\caption{Performance (on top) and data (on bottom) comparing LOG-DS, NOMAD, and X-LOG-DFL on the complete problem collection.} \label{all_and_feas_2k}
\end{figure}

Theoretical guarantees for problems with equality constraints have not yet been established for NOMAD. To {keep} fairness in the experiments, Figure~\ref{noneq_2k} reports the results on the subset of problems with only inequality constraints ($61$ out of $96$ problems). Once again, LOG-DS appears to be the solver with the best performance.

\begin{figure}[ht!]
\centering
\includegraphics[scale=0.4]{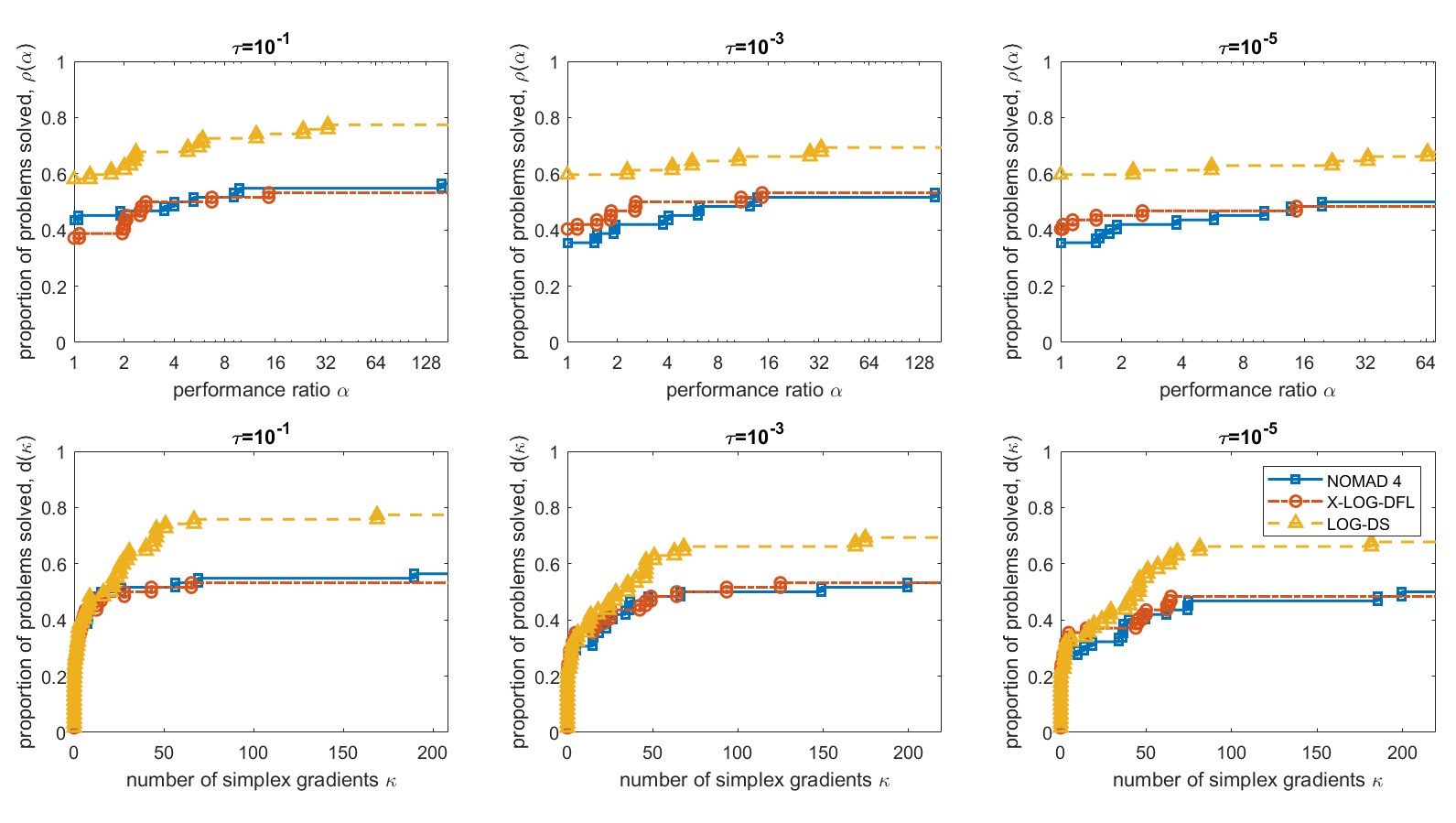}
\caption{Performance (on top) and data (on bottom) profiles comparing LOG-DS, NOMAD, and LOG-DFL on the subset of problems with only inequality constraints.}
\label{noneq_2k}
\end{figure}

Finally, {for the sake of transparency}, Figures~\ref{viol_6} and~\ref{viol_8} report the comparison between LOG-DS and NOMAD, considering lower levels of relaxation for the violation of the nonlinear constraints, namely $\epsilon_{\tt viol} = 10^{-6}$ and $\epsilon_{\tt viol} = 10^{-8}$. {The results are consistent with those presented for $\epsilon_{\tt viol}=10^{-4}$.}

% \begin{figure}[h!]
% \centering
% \includegraphics[scale=0.25]{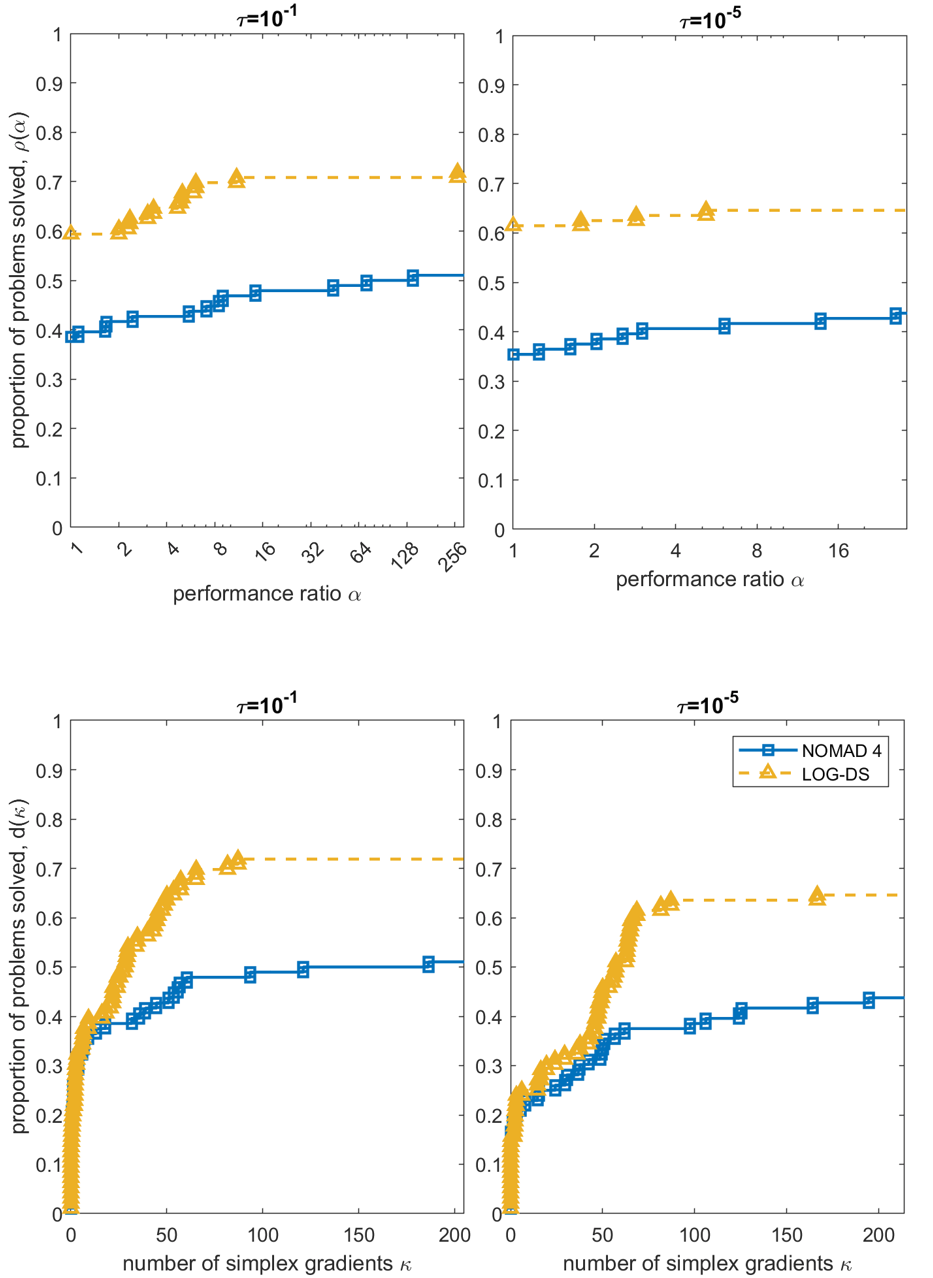}
% \caption{Performance (on top) and data (on bottom) profiles comparing LOG-DS and NOMAD on the complete collection of problems, with $\epsilon_{\tt viol}=10^{-6}$.}
% \label{viol_6}
% \end{figure}

% \begin{figure}[h!]
% \centering
% \includegraphics[scale=0.25]{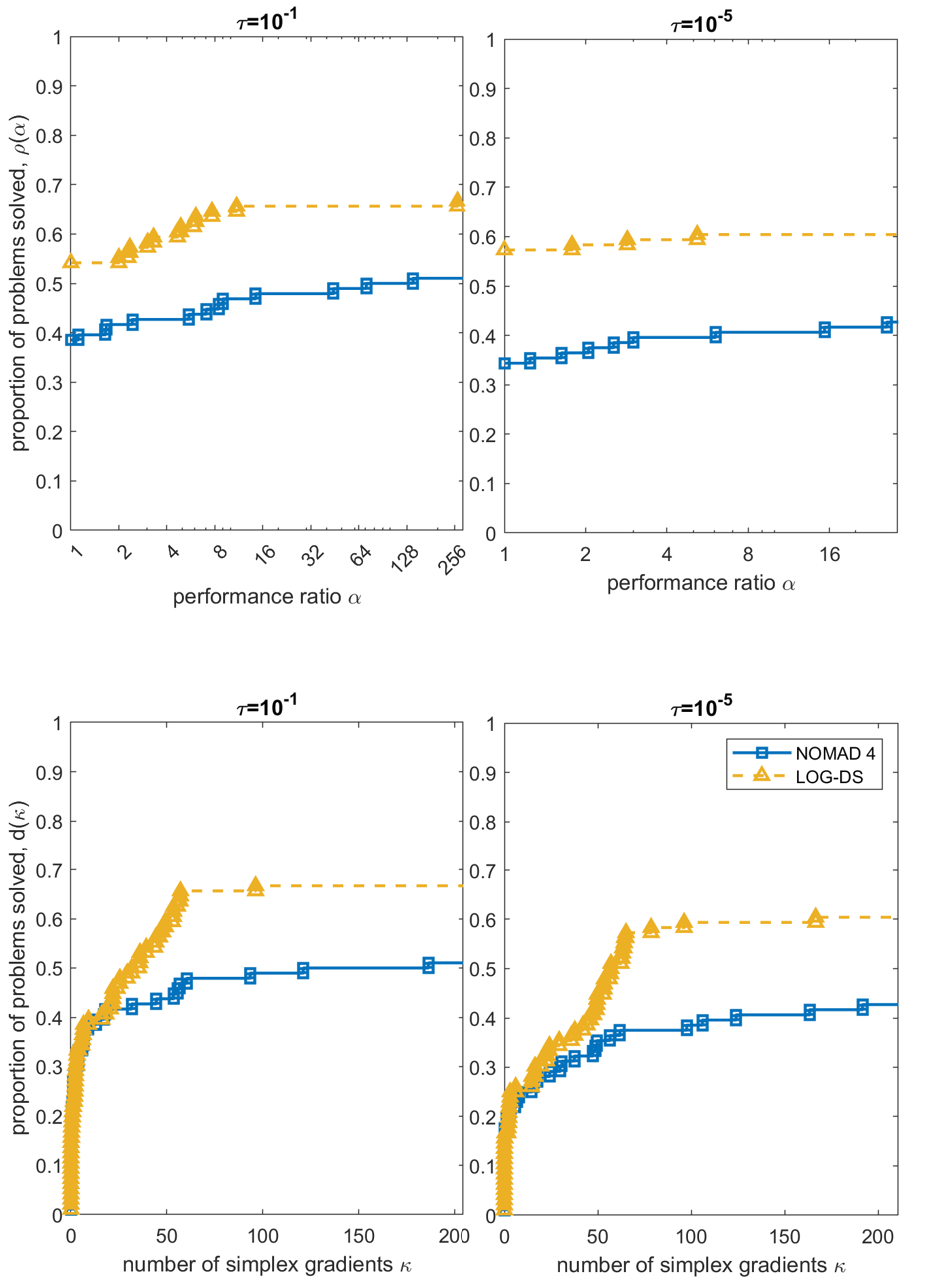}
% \caption{Performance (on top) and data (on bottom) profiles comparing LOG-DS and NOMAD on the complete collection of problems, with $\epsilon_{\tt viol}=10^{-8}$.}
% \label{viol_8}
% \end{figure}

% \begin{figure}[h!]
% \centering
% \begin{subfigure}[$\epsilon_{\tt viol}=10^{-6}$.    \label{viol_6}]{\hspace*{-2cm}\includegraphics[scale =0.11]{LOGDS_CUTEST_vsNOMAD_viol6.png}}
% \end{subfigure}
% \begin{subfigure}[$\epsilon_{\tt viol}=10^{-8}$. \label{viol_8}]{
% \includegraphics[scale=0.11]{LOGDS_CUTEST_vsNOMAD_viol8.png}}
% \end{subfigure}
% \caption{Performance (on top) and data (on bottom) profiles comparing LOG-DS and NOMAD on the complete collection of problems}
% \label{fig:E2}
% \end{figure}

\begin{figure}[h!]
\centering
\hspace*{-0.5cm}\subfigure[$\epsilon_{\tt viol}=10^{-6}$ \label{viol_6}]{
\includegraphics[width=0.5\textwidth]{LOGDS_CUTEST_vsNOMAD_viol6.png}
}\hfill
\hspace*{-0.5cm}\subfigure[$\epsilon_{\tt viol}=10^{-8}$ \label{viol_8}]{
\includegraphics[width=0.5\textwidth]{LOGDS_CUTEST_vsNOMAD_viol8.png}}
\caption{{Performance (on top) and data (on bottom) profiles comparing LOG-DS and NOMAD on the complete problem collection, for different levels of violation of the nonlinear constraints.}}
\label{fig:E2}
\end{figure}

In summary, considering the outcomes of the different numerical experiments, the performance of LOG-DS on the CUTEst collection of problems appears to be the most efficient and robust across different scenarios, demonstrating the numerical applicability of the algorithm.

\subsection{An engineering application}
The numerical performance of LOG-DS is now evaluated in a real-world engineering benchmark, focused on the design and operation of a concentrated solar power (CSP) plant, as detailed in~\cite{solar_paper}.  

%This subsection considers a real-world engineering benchmark focused on the design and operation of a concentrated solar power (CSP) plant, as detailed in~\cite{solar_paper}.  

The SOLAR benchmark comprises realistic black-box optimization problems, simulating both design and off-design scenarios for a molten-salt CSP plant with thermal energy storage. It highlights the main challenges of black-box optimization, including complex and diverse problem structures, while remaining reproducible and openly accessible.

The tests are conducted in the {\sf solar 6} instance, which consists of a problem with five variables and six nonlinear inequality constraints. In the official {\sf solar} repository\footnote{\url{https://github.com/bbopt/solar/blob/master/tests/6_MINCOST_TSTS/x0\_LH30.txt}}, thirty different starting points are provided, therefore the experiments are performed on the {corresponding} thirty different instances of the problem. 

LOG-DS and NOMAD are compared under {a function evaluation budget of $200n$}, with no relaxation allowed on the violation of the constraints, i.e. $\epsilon_{\tt viol}=0$. Results are presented in two subsections, each corresponding to a different version of LOG-DS. The first version uses the same algorithmic settings as in {the} previous experiments on academic test problems. The second considers a different approach for generating the poll directions, motivated by the characteristics of the {\sf solar} problem. Specifically, the functions in {\sf solar 6} are {nondifferentiable} and exhibit nonsmoothness or discontinuities. As a result, relying solely on coordinate directions and vectors of ones or minus ones may limit the achievable solution precision, and even surrogate models may fail to provide further improvement. {To tackle this issue, a new strategy to build the set of poll directions is introduced in Subsection~\ref{sec532}.}
%To address this, the algorithm’s exploratory capability is heuristically enhanced by introducing a structured approach for generating directions that better explore the space around the current point. Further details are provided in Subsection 5.3.2.

\subsubsection{LOG-DS with default poll directions}
{Figure~\ref{SOLAR6_coord_1000fun}  illustrates} the results obtained with LOG-DS and NOMAD.
%for two different budgets of function evaluations, 500 and 1000, respectively.
%The results for this section are reported in Figures~\ref{SOLAR6_coord_500fun} and \ref{SOLAR6_coord_1000fun}. 
The profiles show that LOG-DS with the default set of poll directions, considered in the previous tests, achieves lower final accuracy on the SOLAR instances and terminates earlier than NOMAD. A plausible explanation is that some of the functions are nonsmooth, and possibly discontinuous, near the optimum. The models cannot produce consistent descent directions or meaningful improvements, so the stopping criteria of LOG-DS are triggered prematurely. By contrast, NOMAD’s direct search mechanism is less dependent on smooth local models and can continue to probe the {neighborhood}, which explains its superior final precision on this problem. This suggests that for the SOLAR benchmark one should avoid relying solely on coordinate or model-based directions and instead use polling strategies based on randomized or enlarged direction sets that are more robust to nonsmoothness.

% \begin{figure}[h!]
% \centering
% \includegraphics[scale=0.3]{LOGDS_coord_SOLAR_profiles_500fun.png}
% \caption{Performance (on top) and data (on bottom) profiles comparing LOG-DS with the default set of poll directions and NOMAD on the instance {\sf solar 6}, allowing $500$ function evaluations.}
% \label{SOLAR6_coord_500fun}
% \end{figure}

\begin{figure}[h!]
\centering
\includegraphics[scale=0.4]{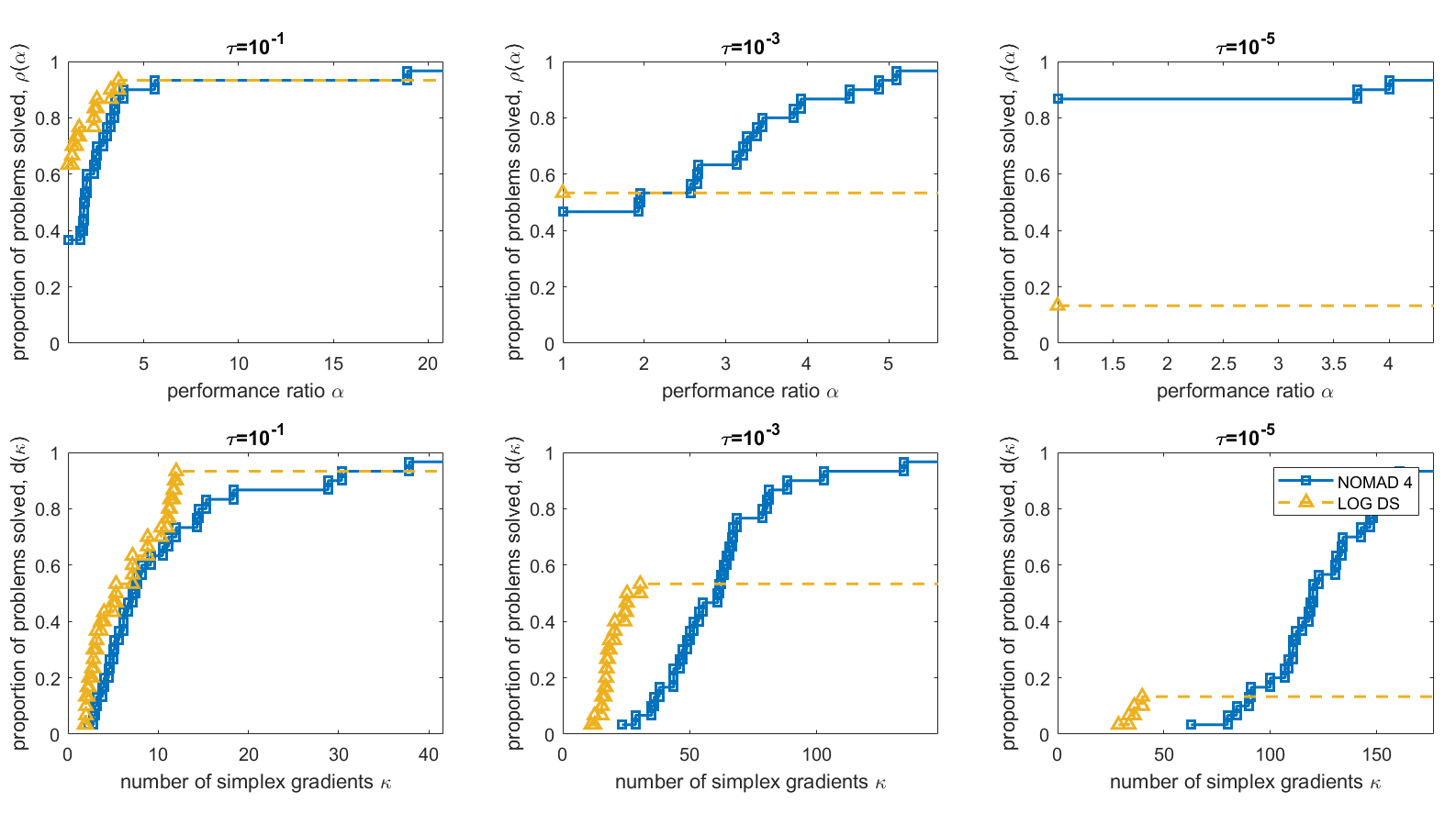}
\caption{Performance (on top) and data (on bottom) profiles comparing LOG-DS with the default set of poll directions and NOMAD on the instance {\sf solar 6}, allowing $1000$ function evaluations.}
\label{SOLAR6_coord_1000fun}
\end{figure}

\subsubsection{LOG-DS with quasi-dense structured directions}\label{sec532}
Motivated by the previous subsection, a different way of generating the poll directions is proposed. At each iteration $k$, the algorithm generates a random vector {$\bvec{v}_k\in\R^n$}, whose entries are independent and identically distributed. {Every entry is either 0 with probability $p = 0.5$, or follows a standard normal distribution with probability $1 - p$.} This construction yields a random vector with expected sparsity {$p$}, i.e., on average, half of the entries are exactly zero, while the nonzero entries are standard Gaussian random variables. The Householder transformation {$\bvec{H}_k=\bvec{I}_n-2\bvec{v}_k\bvec{v}_k^\top$} is then applied, for orthogonality. {A similar use of the Householder transformation for orthogonality has been proposed in OrthoMADS~\cite{Abramson_Audet_Dennis_LeDigabel_2009}}. The columns of $\bvec{H}_k$ form a basis for $\R^n$, and, crucially, the coordinate directions corresponding to the zero entries of {$\bvec{v}_k$} are preserved among the columns of $\bvec{H}_k$. This ensures that the generated set of directions includes both random directions and coordinate-aligned directions associated with the sparsity pattern of {$\bvec{v}_k$}.

To form a positive spanning set, $\Dd_k$ is set to include both the columns of 
$\bvec{H}_k$ and their negatives, and each direction is normalized. This approach provides a structured yet randomized set of directions that enhances the algorithm’s ability to explore the local neighborhood, particularly in the presence of nonsmoothness or discontinuities, while guaranteeing that coordinate directions are always represented when the corresponding entry of {$\bvec{v}_k$} is zero.

The results of the experiments are reported {in Figure~\ref{SOLAR_1000fun}}. It can be observed that {this new variant of } LOG-DS presents the best performance, for any of the three precision levels considered, both in terms of efficiency and robustness. {Let us highlight that LOG-DS reaches convergence on most of the instances using only half of the available budget.}

% \begin{figure}[h!]
% \centering
% \includegraphics[scale=0.3]{LOGDS_dense_SOLAR_profiles_500fun.png}
% \caption{Performance (on top) and data (on bottom) profiles comparing LOG-DS with quasi-dense directions and NOMAD on the instance {\sf solar 6}, allowing $500$ function evaluations.}
% \label{SOLAR_500fun}
% \end{figure}

\begin{figure}[h!]
\centering
\includegraphics[scale=0.4]{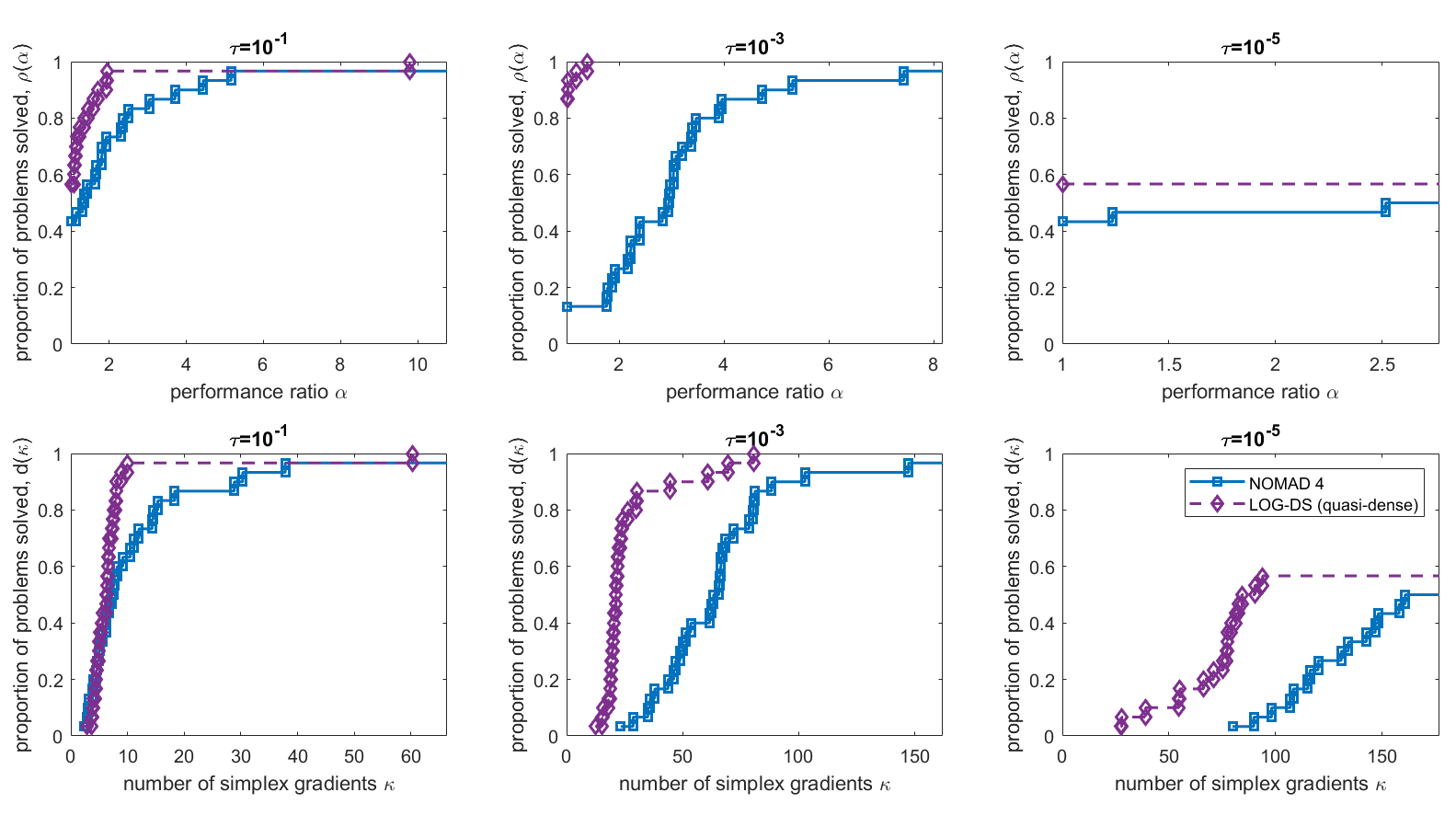}
\caption{Performance (on top) and data (on bottom) profiles comparing LOG-DS with quasi-dense directions and NOMAD on the instance {\sf solar 6}, allowing $1000$ function evaluations.}
\label{SOLAR_1000fun}
\end{figure}

\section{Conclusions}
\label{conclusions}
The primary objective of this work was to extend the approach introduced in~\cite{Brilli_Liuzzi_Lucidi_2021} to generalized pattern search, allowing to efficiently address nonlinear constraints.

{To this end, the present work adapted the SID-PSM algorithm, a generalized pattern search method in which polynomial models are used both at the search and the poll steps to improve the numerical performance. The resulting algorithm, LOG-DS, retains the basic algorithmic features of SID-PSM but uses a mixed penalty-logarithmic barrier merit function to address general nonlinear and linear constraints.}

%To accomplish it, the SID-PSM algorithm, a generalized pattern search method in which polynomial models are used both at the search and the poll steps to improve the numerical performance, was adapted. A new algorithm, LOG-DS, is proposed, which retains the basic algorithmic features of SID-PSM but uses a mixed penalty-logarithmic barrier merit function to address general nonlinear and linear constraints.

Under standard assumptions, not requiring convexity of the functions defining the problem, convergence was established towards stationary points. Furthermore, an extensive numerical experimentation allowed to compare the performance of LOG-DS with several state-of-the-art solvers on a large set of test problems from the CUTEst collection and an engineering application. The numerical results indicate the robustness, efficiency, and overall effectiveness of the proposed algorithm.

\paragraph{Acknowledgments}
The authors would like to thank Christophe Tribes, from GERAD and Polytechnique Montréal, for helping with the numerical experiments related to NOMAD and two anonymous reviewers for the valuable comments and suggestions that led to a better version of the paper.

\section*{Declarations}
\paragraph{Funding}
The second and fourth authors were funded by national funds through FCT - Funda\c c\~ao para a Ci\^encia e a Tecnologia I.P., under the scope of the projects UID/00297/2025 and UID/PRR/00297/2025 (Center for Mathematics and Applications). The fourth author was also funded by national funds through FCT - Funda\c c\~ao para a Ci\^encia e a Tecnologia I.P., under the scope of the project UI/BD/151246/2021.

\paragraph{Competing interests}
The authors have no competing interests to declare that are relevant to the content of this article.

\paragraph{Data availability}
The datasets generated and/or analyzed during the current study are available from the corresponding author on reasonable request.

\paragraph{Code availability}
Code developed in this study can be obtained from the corresponding author on reasonable request.

\appendix

\section{Two technical results}
In this {appendix}, two technical results involving inequalities for real numbers and exponents are presented. The first lemma
provides a bound for the sum of powers of positive real numbers.
\begin{lemma}\cite[Equation (2), p. 37]{Rudin_1991}
\label{lemma2_appendix} Let $a\in\R_+$, $b\in\R_+$, and $p \in (0,1]$. It results $$(a+b)^p \leq a^p + b^p.$$
\end{lemma}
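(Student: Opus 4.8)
The plan is to reduce the inequality to a one-variable statement by homogeneity. Since $a+b>0$, I would set $s=a+b$ and divide the desired inequality $(a+b)^p\leq a^p+b^p$ through by $s^p>0$. Writing $t=a/s\in[0,1]$, so that $b/s=1-t$, the claim becomes the scale-free inequality
\[
1\leq t^p+(1-t)^p,\qquad t\in[0,1].
\]
Thus the whole problem collapses to showing that the function $t\mapsto t^p+(1-t)^p$ stays at least $1$ on the unit interval.

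The key elementary fact I would isolate is that for every $x\in[0,1]$ and every $p\in[0,1]$ one has $x^p\geq x$. For $x=0$ this is trivial, and for $x\in(0,1]$ it follows from writing $x^p=x\cdot x^{p-1}$ and noting that $x^{p-1}\geq 1$ because $x\leq 1$ and the exponent $p-1\leq 0$. Applying this bound to both $x=t$ and $x=1-t$ and adding yields $t^p+(1-t)^p\geq t+(1-t)=1$, which is exactly what the normalized form requires. Unwinding the normalization then gives $(a+b)^p\leq a^p+b^p$.

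I would handle the degenerate cases (where $a=0$ or $b=0$) separately at the start, since the normalization implicitly assumes both quantities positive; these reduce to checking $b^p\leq 0^p+b^p$ (respectively $a^p\leq a^p+0^p$) under the usual conventions for the power function, which hold for all $p\in[0,1]$.

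As an alternative I could argue directly from the concavity of $\phi(x)=x^p$ on $[0,\infty)$ together with $\phi(0)\geq 0$: writing $a=\tfrac{a}{a+b}(a+b)+\tfrac{b}{a+b}\cdot 0$ and invoking concavity gives $\phi(a)\geq\tfrac{a}{a+b}\phi(a+b)$, and symmetrically $\phi(b)\geq\tfrac{b}{a+b}\phi(a+b)$; adding the two recovers the claim. Either route is short, and there is no genuine obstacle here. The only point demanding a moment of care is the boundary behaviour at $p=0$ and at a vanishing argument, which is why I would dispatch those cases explicitly before normalizing.
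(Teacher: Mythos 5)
Your proof is correct and is essentially the paper's own argument in normalized form: the paper writes $(a+b)^p = a^p\left(\tfrac{a}{a+b}\right)^{1-p} + b^p\left(\tfrac{b}{a+b}\right)^{1-p} \leq a^p + b^p$ using $\left(\tfrac{a}{a+b}\right)^{1-p}\leq 1$, which is exactly your key fact $t^p\geq t$ on $[0,1]$ (equivalently $t^{1-p}\leq 1$) after dividing through by $(a+b)^p$. The concavity alternative you sketch and the explicit treatment of the degenerate cases are fine but add nothing essential beyond the paper's one-line computation.
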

% \begin{proof}
% We have $\dfrac{a}{a+b}\leq 1$ and $\dfrac{b}{a+b}\leq 1$, so
% \[
% \begin{split}
% (a+b)^p & = \frac{(a+b)}{(a+b)^{1-p}} = \frac{a}{(a+b)^{1-p}} + \frac{b}{(a+b)^{1-p}}  \\
% & = a^p \left(\frac{a}{a+b}\right)^{1-p} + b^p \left(\frac{b}{a+b}\right)^{1-p} \\
% & \leq a^p + b^p.
% \end{split}
% \]
% \end{proof}

The next result establishes bounds concerning the difference of powers of absolute values and maximum functions, which are fundamental for proving the boundedness of the Lagrange multipliers.
\begin{lemma}
\label{lemma3_appendix} Let $a\in\R$, $b\in\R$ and $p \in (0,1]$. The following inequalities hold
\begin{equation}
\label{lemma3_appendix_ineq1}
||a|^p-|b|^p|\leq |a-b|^p,
\end{equation}
and
\begin{equation}
\label{lemma3_appendix_ineq2}
|\max\{a,0\}^p-\max\{b,0\}^p| \leq |a - b|^p.
\end{equation}
\end{lemma}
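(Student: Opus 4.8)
The plan is to reduce both inequalities to a single elementary fact about nonnegative arguments, namely that for all $u,v\in\R_+$ and $p\in[0,1]$ one has $|u^p-v^p|\le|u-v|^p$, and then to deduce the two claims by exploiting that both $t\mapsto|t|$ and $t\mapsto\max\{t,0\}$ are $1$-Lipschitz, together with the monotonicity of $t\mapsto t^p$ on $\R_+$.

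First I would prove the nonnegative version. Given $u,v\ge 0$, assume without loss of generality that $u\ge v$, so that $|u^p-v^p|=u^p-v^p$ because $t\mapsto t^p$ is nondecreasing. Writing $u=v+(u-v)$ with both summands nonnegative, I would apply Lemma~\ref{lemma2_appendix} (subadditivity of $t\mapsto t^p$ for $p\in[0,1]$) to obtain $u^p=(v+(u-v))^p\le v^p+(u-v)^p$, which rearranges to $u^p-v^p\le(u-v)^p=|u-v|^p$. The only caveat is the degenerate case $u=v=0$, in which Lemma~\ref{lemma2_appendix} does not apply since its hypothesis $a+b>0$ fails; but there both sides vanish and the inequality holds trivially.

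For \eqref{lemma3_appendix_ineq1} I would set $u=|a|$ and $v=|b|$ and combine the nonnegative version with the reverse triangle inequality $\big||a|-|b|\big|\le|a-b|$ and the monotonicity of $t\mapsto t^p$, giving $\big||a|^p-|b|^p\big|\le\big||a|-|b|\big|^p\le|a-b|^p$. For \eqref{lemma3_appendix_ineq2} I would proceed identically with $u=\max\{a,0\}$ and $v=\max\{b,0\}$, the only extra ingredient being the $1$-Lipschitz estimate $|\max\{a,0\}-\max\{b,0\}|\le|a-b|$, which I would verify by a short case distinction on the signs of $a$ and $b$ (or cite it as the standard nonexpansiveness of the positive part).

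The proof presents no genuine obstacle: essentially all of the content is packaged in Lemma~\ref{lemma2_appendix}, and the remaining work is to recognize the common structure shared by the two statements and to dispatch the degenerate zero case and the Lipschitz bound for the positive part. If anything, the only point requiring a moment's care is ensuring that the hypothesis $a+b>0$ of Lemma~\ref{lemma2_appendix} is met before invoking it.
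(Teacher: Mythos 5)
Your proposal is correct. The degenerate case $u=v=0$ is properly flagged and dispatched, the hypothesis $a+b>0$ of Lemma~\ref{lemma2_appendix} is verified before the lemma is invoked (when $u\ge v$ and $u>0$, the decomposition $u=v+(u-v)$ has positive sum), and the two reductions via monotonicity of $t\mapsto t^p$ are sound.

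Your route differs from the paper's in how the work is organized, though both ultimately rest on the same engine, the subadditivity in Lemma~\ref{lemma2_appendix}. The paper proves \eqref{lemma3_appendix_ineq1} directly by the add-and-subtract trick, bounding $|a|^p=|a-b+b|^p\le(|a-b|+|b|)^p\le|a-b|^p+|b|^p$ and then symmetrically with $a$ and $b$ exchanged; it then proves \eqref{lemma3_appendix_ineq2} by a four-way case analysis on the signs of $a$ and $b$, invoking \eqref{lemma3_appendix_ineq1} only in the case $a>0$, $b>0$. You instead isolate a single core inequality on nonnegative arguments, $|u^p-v^p|\le|u-v|^p$ for $u,v\ge 0$, and obtain both claims by composing it with a $1$-Lipschitz map (absolute value via the reverse triangle inequality, positive part via its nonexpansiveness) followed by monotonicity of $t\mapsto t^p$. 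What your factorization buys is uniformity: the two statements are visibly instances of one principle, the sign case analysis essentially disappears (it survives only in the short verification that the positive part is nonexpansive), and the argument extends verbatim to $|F(a)^p-F(b)^p|\le|a-b|^p$ for any $1$-Lipschitz $F\ge 0$. What the paper's version buys is that each inequality is proved self-containedly by elementary manipulations, with no intermediate lemma to set up. Either proof is acceptable; yours is arguably the cleaner packaging.
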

\begin{proof}
To prove \eqref{lemma3_appendix_ineq1} consider
\begin{equation}
\label{eq:lemmaA3_1}
|a|^p = |a-b+b|^p \leq (|a-b|+|b|)^p \leq |a-b|^p+|b|^p,
\end{equation}
where the last inequality follows from Lemma \ref{lemma2_appendix}. On the other hand,
\begin{equation}
\label{eq:lemmaA3_2}
|b|^p = |b-a+a|^p \leq (|b-a|+|a|)^p \leq |a-b|^p+|a|^p.
\end{equation}
Then, from inequalities \eqref{eq:lemmaA3_1} and \eqref{eq:lemmaA3_2}, inequality~\eqref{lemma3_appendix_ineq1} is derived.\\
To prove \eqref{lemma3_appendix_ineq2}, four different cases are analyzed:
\begin{itemize}
\item[i)] If $a\leq 0$ and $b\leq 0$, the result holds trivially.
\item[ii)] If $a\leq 0$ and $b>0$, then $|-b^p|=b^p\leq |b-a|^p=|a-b|^p$.
\item[iii)] If $a>0$ and $b\leq 0$, then ${|a^p|}=a^p\leq |a-b|^p$.
\item[iv)] If $a>0$ and $b>0$, using \eqref{lemma3_appendix_ineq1} we conclude that:
\begin{equation*}
|\max\{a,0\}^p-\max\{b,0\}^p| =|a^p-b^p|=||a|^p-|b|^p|\leq |a - b|^p.
\end{equation*}

\end{itemize}
\end{proof}

\section{Boundedness of sequences of Lagrange multipliers}
\label{appendix}
First, a result regarding a property of sequences of scalars is presented, which is used to prove the boundedness of the Lagrange multipliers.
\begin{lemma}
\label{lemma1_appendix} Let $\{a^i_k\}_{k\in\mathbb{N}}$,
$i=1,\ldots,\ell$, be sequences of scalars. 
Either all the sequences converge to zero, or there exists 
an index $s\in\{1,\ldots,\ell\}$ and an infinite subset $\K\subseteq
\mathbb{N}$ such that:
\begin{equation}
\Disp \lim_{k\to+\infty,\atop k\in \K}
\dfrac{a_k^i}{|a_k^{s}|}=z_i, \quad |z_i|<+\infty, \quad
i=1,\ldots,\ell.
\end{equation}
%i.e., all the sequences $\left\{\dfrac{a_k^i}{|a_k^{s}|}\right\}_{k\in \K}$ are bounded.

%a sequence index $s\in\{1, . . . ,\ell\}$ and an infinite subset $\K \subset \N$ such that the sequences 
%$\{\tfrac{a_k^i}{|a_k^s|}\}_{k\in\K}$ are bounded for each $i \in \{1, \dots, \ell\}$.}

% Two cases can occur:
% \begin{itemize}
% \item[(i)] It results $\Disp\lim_{k\to+\infty} a_k^i = 0$, $i=1,\dots,\ell$. In particular, all the sequences are bounded;
% \item[(ii)] An index $j\in\{1,\dots,\ell\}$, an infinite index set $\K_j\subseteq\{0,1,\dots\}$ and a positive scalar $\bar a_j$  exist such that
% \[
% |a_k^j| > \bar a_j > 0,\quad\forall k\in \K_j,
% \]
% i.e., at least one sequence is not convergent to zero.
%Then, there exists an
%
%\end{itemize}
\end{lemma}

\begin{proof}
%Point (i) directly follows from the properties of convergent sequences.
%
%Then, 
Let us assume that at least one sequence is not
convergent to zero. Then, an $\epsilon>0$ and an infinite index set $\hat{\K}$ exist such that for $k\in\hat{\K}$ and sufficiently large, $|a_k^{s_k}| = \max\{|a_k^i|\mid i=1,\ldots,\ell\}>\epsilon$, where $s_k\in\{1,\dots,\ell\}$. Since the number of sequences, $\ell$, is finite, it is possible to select a subsequence of $\hat\K$, indexed in $\bar{\K}$, such that  
$|a_k^{s_k}|=|a_k^{s}|>0$ for all $k\in\bar{\K}$. Thus, for $i=1,\ldots,\ell$, $\left\{\frac{a_k^i}{|a_k^{s}|}\right\}_{k\in\bar{\K}}$ is bounded between $-1$ and $1$. The result follows, since every bounded sequence has convergent subsequences.
\end{proof}

The next result establishes the boundedness of the subsequence of Lagrange multipliers.
\begin{theorem}\label{theo:bounded_multipliers}
In the conditions of Theorem~\ref{main_theorem}, the subsequences $\set{\lambda_\ell(\vecx_k;\rho_k)}_{k\in\K_\rho^{\tt x}}$, $\ell=1,\ldots,m$ and $\set{\mu_j(\vecx_k;\rho_k)}_{k\in\K_\rho^{\tt x}}$, $j=1,\ldots,p$, defined as:% where $\K=\set{k\in\mathbb{N}: \rho_{k+1}<\rho_k}$ and
\begin{eqnarray*}
&& \lambda_\ell(\vecx_k;\rho_k) = \begin{cases} \dfrac{\rho_k}{-g_\ell(\vecx_k)}, & \text { if } \ell\in\Gint\\ \nu\left(\dfrac{\max\{g_\ell(\vecx_k),0\}}{\rho_k}\right)^{\nu-1}, & \text { if } \ell\in\Gext \end{cases}\\
&& \mu_j(\vecx_k;\rho_k) =
\nu\left(\dfrac{|h_j(\vecx_k)|}{\rho_k}\right)^{\nu-1}, \quad
j=1,\ldots,p,
\end{eqnarray*}
are bounded.
\end{theorem}
\begin{proof}
Recalling the expressions of $\lambda_\ell(\vecx;\rho)$, $\ell=1,\ldots,m$, and of $\mu_j(\vecx;\rho)$, $j=1,\ldots,p$, inequality~\eqref{eq:grad_Z} can be rewritten as
\begin{equation}
\label{eq_ap:grad_Z_1}
\begin{split}
&\Bigg(\nabla f(\vecy_k^i) + \sum_{\ell=1}^{m} \lambda_\ell(\vecy_k^i;\rho_k) \nabla g_\ell(\vecy_k^i) + \\
&\qquad \sum_{j=1}^{p} \mu_j(\vecy_k^i;\rho) \nabla h_j
(\vecy_k^i) \Bigg)^{\top}{\vecd_k^i} \geq
-\frac{\xi(\hat{\alpha}_k^i)}{\hat{\alpha}_k^i}, \quad \forall\
i\in \J_k\ \mbox{and}\ k\in\K_\rho^{\tt x},
\end{split}
\end{equation}
where $\vecy_k^i=\vecx_k+t_k^i{\hat{\alpha}_k^i} \vecd_k^i$,
with $t_k^i\in (0,1)$ and $\hat{\alpha}_k^i\leq \alpha_k$.

Let us start by establishing that
\begin{equation}
\label{eq_ap:lim_lambda_log} \Disp \lim_{k\to+\infty\atop k\in
\K_\rho^{\tt x}} |\lambda_\ell(\vecx_k;\rho_k) - \lambda_\ell(\vecy_k^i;\rho_k) | = 0, \quad
\ell\in\Gint, \quad \forall\ i\in \J_k
\end{equation}
In fact,
\begin{eqnarray}
\label{eq_ap:in1_lambda}
\left|\frac{\rho_k}{-g_\ell(\vecx_k)}-\frac{\rho_k}{-g_\ell(\vecy_k^i)}\right|
&=&
\rho_k\left|\frac{g_\ell(\vecx_k)-g_\ell(\vecy_k^i)}{(-g_\ell(\vecy_k^i))(-g_\ell(\vecx_k))}\right|\\
&=& \rho_k \frac{\left|\nabla g_\ell(\vecu_k^{\ell})^\top
\left(\vecx_k-\vecy_k^i\right)\right|}{|g_\ell(\vecy_k^i)|
|g_\ell(\vecx_k)|}\nonumber \\ &\leq& \rho_k
\frac{\left\|\nabla
g_\ell(\vecu_k^{\ell})\right\|\left\|\vecy_k^i-\vecx_k\right\|}{|g_\ell(\vecy_k^i)
| |g_\ell\left(\vecx_k\right)|},\nonumber
\end{eqnarray}
where $\vecu_k^{\ell}= \vecx_k+\tilde{t}_k^{\ell}(\vecy_k^i-\vecx_k)$ with $\tilde{t}_k^{\ell}\in (0,1)$.

Then, there is $c_1>0$ such that
\begin{eqnarray}
\label{eq_ap:in2_lambda}
\rho_k \left\|\nabla
g_\ell(\vecu_k^{\ell})\right\|
\frac{\left\|\vecy_k^i-\vecx_k\right\|}{|g_\ell(\vecy_k^i)| |g_\ell\left(\vecx_k\right)|} &\leq& \rho_k c_1 \dfrac{\|\vecy_k^i-\vecx_k
\|}{|g_\ell(\vecy_k^i)| |g_\ell(\vecx_k)|}\\  &=& \rho_k c_1
\dfrac{\|\vecx_k+t_k^i\hat{\alpha}_k^i \vecd_k^i-\vecx_k
\|}{|g_\ell(\vecy_k^i)||g_\ell(\vecx_k)|}\nonumber \\ &=&  \rho_k c_1
\dfrac{\|t_k^i\hat{\alpha}_k^i \vecd_k^i
\|}{|g_\ell(\vecy_k^i)| |g_\ell(\vecx_k)|}\nonumber \\ &=& \rho_k c_1
\dfrac{t_k^i \hat{\alpha}_k^i \|\vecd_k^i
\|}{|g_\ell(\vecy_k^i)| |g_\ell(\vecx_k)|}.\nonumber
\end{eqnarray}
Now, we will prove that there is another constant $c_2>0$ such
that
\begin{equation}
\label{eq_ap:const2} \dfrac{1}{|g_\ell(\vecy_k^i)|}\leq
c_2\dfrac{1}{|g_\ell(\vecx_k)|}.
\end{equation}
Suppose, in order to arrive to a contradiction, that $c_2$ does not exist. This would imply that there  exists $\K_\rho^{\tt x,g}\subseteq\K_\rho^{\tt x}\subseteq\K_\rho$ such that
\begin{equation}
\label{eq_ap:lim_g} \Disp \lim_{k\to+\infty \atop k\in \K_\rho^{\tt x,g}}
\dfrac{\frac{1}{|g_\ell(\vecy_k^i)|}}{\frac{1}{|g_\ell(\vecx_k)|}} =
\lim_{k\to+\infty \atop k\in\K_\rho^{\tt x,g}}
\dfrac{|g_\ell(\vecx_k)|}{|g_\ell(\vecy_k^i)|}=+\infty.
\end{equation}
Let us consider the case where
\begin{equation*}
\Disp \lim_{k\to+\infty \atop k\in \K_\rho^{\tt x,g}} |g_\ell(\vecx_k)|=0.
\end{equation*}
Since $g_\ell(\vecx_k)<0$ and $g_\ell(\vecy_k^i)<0$ for all $k\in \K_\rho^{\tt x,g}$, by~\eqref{eq_ap:lim_g} there exists $\overline{k}\in\mathbb{N}$ such that, for all $k\geq \overline{k}$, $k\in\K_\rho^{\tt x,g}$, we have $$-g_\ell(\vecx_k)>-g_\ell(\vecy_k^i)=-g_\ell(\vecx_k+t_k^i\hat{\alpha}_k^i \vecd_k^i).$$ Using the Lipschitz continuity of $g_\ell$, $\ell=1,\ldots,m$ and the fact that $\|\vecd_k^i\|=1$ for all $i\in \J_k$, it follows $$-g_\ell(\vecx_k+t_k^i\hat{\alpha}_k^i \vecd_k^i)\geq-g_\ell(\vecx_k)-L_{g_\ell}\|t_k^i\hat{\alpha}_k^i \vecd_k^i\|=-g_\ell(\vecx_k)-L_{g_\ell}t_k^i\hat{\alpha}_k^i.$$
The definition of $\K_\rho$ guarantees that $$\alpha_{k+1}\leq\min\set{\rho_k^{\beta},(\gmink)^2}, \qquad \alpha_{k+1}=\theta_{\alpha}\alpha_k,$$ so that
\begin{eqnarray}
\label{alfa_k_leq}
\alpha_k\leq \dfrac{\min\set{\rho_k^{\beta},(\gmink)^2}}{\theta_{\alpha}}
\end{eqnarray}
Hence, since $\hat{\alpha}_k^i\leq \alpha_k$, we have $$-g_\ell(\vecx_k)-L_{g_\ell} t_k^i \hat{\alpha}_k^i\geq -g_\ell(\vecx_k)-L_{g_\ell} t_k^i \frac{1}{\theta_{\alpha}}(g_\ell(\vecx_k))^2,\quad \forall k\geq\overline{k},\ k\in \K_\rho^{\tt x,g}$$ Thus,
\begin{eqnarray*}
\Disp \lim_{k\to+\infty \atop k\in \K_\rho^{\tt x,g}}
\dfrac{-g_\ell(\vecx_k)}{-g_\ell(\vecy_k^i)}&=&\lim_{k\to+\infty \atop k\in \K_\rho^{\tt x,g}}
\dfrac{-g_\ell(\vecx_k)}{-g_\ell(\vecx_k+t_k^i\hat{\alpha}_k^i \vecd_k^i)}\\&\leq&
\lim_{k\to+\infty \atop k\in \K_\rho^{\tt x,g}}
\dfrac{-g_\ell(\vecx_k)}{-g_\ell(\vecx_k)-L_{g_\ell} t_k^i \frac{1}{\theta_{\alpha}}(g_\ell(\vecx_k))^2}= 1,
\end{eqnarray*}
which leads to a contradiction, proving~\eqref{eq_ap:const2}.

Now, by considering the other case
\begin{equation*}
\Disp \lim_{k\to+\infty \atop k\in \K_\rho^{\tt x,g}}
|g_\ell(\vecx_k)|=c<+\infty,
\end{equation*}
we have
\begin{eqnarray*}
\Disp \lim_{k\to+\infty \atop k\in \K_\rho^{\tt x,g}}
\dfrac{-g_\ell(\vecx_k)}{-g_\ell(\vecy_k^i)}=\lim_{k\to+\infty \atop k\in \K_\rho^{\tt x,g}}
\dfrac{-g_\ell(\vecx_k)}{-g_\ell(\vecx_k+t_k^i\hat{\alpha}_k^i
\vecd_k^i)}<+\infty.
\end{eqnarray*}
Again, this leads to a contradiction, proving~\eqref{eq_ap:const2}.

Hence, the existence of the constant $c_2>0$, \eqref{eq_ap:const2}, and recalling that $\hat{\alpha}_k^i\leq\alpha_k$, allow us to write
\begin{eqnarray*}
\dfrac{\rho_k c_1 t_k^i
\hat{\alpha}_k^i}{|g_\ell(\vecy_k^i)||g_\ell(\vecx_k)|}&\leq&\dfrac{\rho_k
c_1 c_2 t_k^i \alpha_{k}}{|g_\ell(\vecx_k)|^2}.
\end{eqnarray*}
The instructions of Step 3 imply that $\vecx_{k+1} = \vecx_k$, so that $\gmink = \Disp\min_{\ell\in\Gint} \{|g_\ell(\vecx_{k+1})|\} = \Disp\min_{\ell\in\Gint} \{|g_\ell(\vecx_{k})|\}$. Recalling \eqref{alfa_k_leq}, it follows $$\dfrac{\rho_k \alpha_k}{(\gmink)^2}\leq \dfrac{\rho_k}{\theta_{\alpha}}.$$ Then, recalling Theorem~\ref{convergence_eps},~\eqref{eq_ap:lim_lambda_log} is proved.\medskip \\

\noindent Furthermore,
\begin{equation}
\label{eq_ap:lim_lambda_ext} \Disp \lim_{k\to+\infty\atop k\in \K_\rho^{\tt x}} |\lambda_\ell(\vecx_k;\rho_k) - \lambda_\ell(\vecy_k^i;\rho_k) | = 0, \quad \ell\in\Gext, \quad \forall\ i\in \J_k
\end{equation}
\noindent can be established. In fact, since  $\|\vecd_k^i\|=1$ for all $i\in \J_k$,
{\small
\begin{eqnarray*}
\hspace*{-1.cm}\left|\dfrac{\nu}{\rho_k^{\nu-1}} \left(\max\{g_\ell(\vecx_k),0\}\right)^{\nu-1}\right.&-&\left.\dfrac{\nu}{\rho_k^{\nu-1}} \left(\max\{g_\ell(\vecy_k^i),0\}\right)^{\nu-1}\right|=\\[7pt]
& = & \dfrac{\nu}{\rho_k^{\nu-1}} \left|\max \{g_\ell(\vecx_k), 0\}^{\nu-1}-\max\{g_\ell(\vecx_k)+\nabla g_\ell(\vecu_k^{\ell})^\top(\vecx_k-\vecy_k^i), 0\}^{\nu-1}\right|\\
(\text{Lemma \ref{lemma3_appendix} -- \eqref{lemma3_appendix_ineq2} })&\leq&\dfrac{\nu}{\rho_k^{\nu-1}} \left|g_\ell(\vecx_k) -g_\ell(\vecx_k)-\nabla g_\ell(\vecu_k^{\ell})^\top(\vecx_k-\vecy_k^i)\right|^{\nu-1} \\
& = &\dfrac{\nu}{\rho_k^{\nu-1}} |\nabla g_\ell(\vecu_k^{\ell})^\top (\vecx_k-\vecy_k^i)|^{\nu-1} \leq \dfrac{\nu}{\rho_k^{\nu-1}} \left\|\nabla g_\ell(\vecu_k^{\ell})\right\|^{\nu-1}\left\|\vecx_k-\vecy_k^i\right\|^{\nu-1} \\
& \leq &c_3\dfrac{\nu}{\rho_k^{\nu-1}} \left\|\left(\vecx_k-(\vecx_k+t_k^i\hat{\alpha}_k^i \vecd_k^i) \right)\right\|^{\nu-1}= c_3\dfrac{\nu}{\rho_k^{\nu-1}} (t_k^i\hat{\alpha}_k^i)^{\nu-1} \\
&\leq & c_3\dfrac{\nu}{\rho_k^{\nu-1}} (\hat{\alpha}_k^i)^{\nu-1}  \leq c_3\nu\left(\dfrac{\alpha_k}{\rho_k}\right)^{\nu-1} \leq  c_3\nu \left(\frac{\rho_k^{\beta-1}}{\theta_{\alpha}}\right)^{\nu-1} \\&=& c_3\nu \theta_{\alpha}^{1-\nu} \rho_k^{(\beta-1)(\nu-1)},
\end{eqnarray*}}
% \begin{eqnarray*}
% & & \left|\dfrac{\nu}{\rho_k^{\nu-1}} \left(\max\{g_\ell(\vecx_k),0\}\right)^{\nu-1}-\dfrac{\nu}{\rho_k^{\nu-1}} \left(\max\{g_\ell(\vecy_k^i),0\}\right)^{\nu-1}\right|=\\
% &=& \dfrac{\nu}{\rho_k^{\nu-1}} \left|\max\{g_\ell(\vecx_k)+\nabla g_\ell(\vecu_k^i)^\top(\vecx_k-y_k^i), 0\}^{\nu-1}-\max \{g_\ell(\vecy_k^i), 0\}^{\nu-1}\right| \\
% &\stackrel{\text{tr. ineq.}}{\leq}& \dfrac{\nu}{\rho_k^{\nu-1}} \left|\max\{g_\ell(\vecx_k), 0\}^{\nu-1}+\max\{\nabla g_\ell(\vecu_k^i)^\top(\vecx_k-y_k^i), 0\}^{\nu-1}-\max\{g_\ell(\vecx_k), 0\}^{\nu-1}\right| \\
% & = &\dfrac{\nu}{\rho_k^{\nu-1}} \max\{\nabla g_\ell(\vecu_k^i)^\top (\vecx_k-y_k^i), 0\}^{\nu-1} \leq \dfrac{\nu}{\rho_k^{\nu-1}} \left\|\nabla g_\ell(\vecu_k^i)\right\|^{\nu-1}\left\|\vecx_k-y_k^i\right\|^{\nu-1} \\
% & \leq &c_3\dfrac{\nu}{\rho_k^{\nu-1}} \left\|\left(\vecx_k-(\vecx_k+t_k^i\hat{\alpha}_k^i \vecd_k^i) \right)\right\|^{\nu-1}= c_3\dfrac{\nu}{\rho_k^{\nu-1}} (t_k^i\hat{\alpha}_k^i)^{\nu-1}\left\| \vecd_k^i\right\|^{\nu-1} \\
% &\leq & c_3\dfrac{\nu}{\rho_k^{\nu-1}} (\hat{\alpha}_k^i)^{\nu-1} \left\| \vecd_k^i\right\|^{\nu-1} \leq c_3\nu\left(\dfrac{\alpha_k}{\rho_k}\right)^{\nu-1} \left\| \vecd_k^i\right\|^{\nu-1}\\
% & \leq & c_3\nu \left(\frac{\rho_k^{\beta-1}}{\theta_{\alpha}}\right)^{\nu-1}\left\| \vecd_k^i\right\|^{\nu-1} \leq c_3\nu \theta_{\alpha}^{1-\nu} \rho_k^{\beta(\nu-1)-1}\left\| \vecd_k^i\right\|^{\nu-1},
% \end{eqnarray*}
\noindent where $\vecu_k^{\ell}= \vecx_k+\tilde t_k^{\ell}(\vecy_k^i-\vecx_k)$, with $\tilde t_k^{\ell}\in (0,1)$, and $c_3>0$. Thus, using $\beta>1$, $\nu\in(1,2]$, and recalling Theorem \ref{convergence_eps},  \eqref{eq_ap:lim_lambda_ext} is proved. Therefore,
\begin{equation}
\label{eq_ap:lim_lambda} \Disp \lim_{k\to+\infty\atop k\in \K_\rho^{\tt x}} |\lambda_\ell(\vecx_k;\rho_k) - \lambda_\ell(\vecy_k^i;\rho_k) | = 0, \quad \ell=1,\ldots,m, \quad \forall\ i\in \J_k
\end{equation}
\medskip\\
\noindent Let us now prove that
\begin{equation}
\label{eq_ap:lim_mu}
\Disp \lim_{k\to+\infty\atop k\in \K_\rho^{\tt x}}
|\mu_j(\vecx_k;\rho_k) - \mu_j(\vecy_k^i;\rho_k) | = 0, \quad
j=1,\ldots,p, \quad \forall i\in \J_k
\end{equation}
In fact, recalling that $\nu\in(1,2]$ so that $\nu-1\in(0,1]$, we have
\begin{eqnarray}
\label{eq_ap:in1_mu}
\hspace*{-0.5cm}\left|\nu\left|\frac{h_j(\vecx_k)}{\rho_k}\right|^{\nu-1}-\nu\left|\frac{h_j(\vecy_{k}^i)}{\rho_k}\right|^{\nu-1}\right| &=& \frac{\nu}{\rho_k^{\nu-1}} \left|\left|h_j(\vecx_k)\right|^{\nu-1}-\left| h_j(\vecx_k)+\nabla h_j(\vecu_k^{j})^\top (\vecy_{k}^i-\vecx_k)\right|^{\nu-1}\right| \nonumber\\[7pt]
(\text{Lemma \ref{lemma3_appendix} -- \eqref{lemma3_appendix_ineq1}})
&\leq&\frac{\nu}{\rho_k^{\nu-1}} \left|h_j(\vecx_k)-h_j(\vecx_k)-\nabla h_j(\vecu_k^{j})^\top (\vecy_{k}^i-\vecx_k)\right|^{\nu-1} \nonumber\\[7pt]
%(\text{Lemma \ref{lemma2_appendix}})&\leq&\frac{\nu}{\rho_k^{\nu-1}} \left|\left|h_j(\vecx_k)\right|^{\nu-1}+\left|\nabla h_j(\vecu_k^j)^\top (\vecy_{k}^i-\vecx_k) \right|^{\nu-1}-\left|h_j(\vecx_k)\right|^{\nu-1}\right|\nonumber\\
&=&\frac{\nu}{\rho_k^{\nu-1}} \left|\nabla h_j(\vecu_k^j)^\top (\vecy_{k}^i-\vecx_k)
\right|^{\nu-1}\nonumber\\ &\leq& \frac{\nu}{\rho_k^{\nu-1}} \left\|\nabla h_j(\vecu_k^j)
\right\|^{\nu-1}\left\|(\vecy_{k}^i-\vecx_k)\right\|^{\nu-1},
\end{eqnarray}
where $\vecu_k^j=\vecx_k+\tilde t_k^j (\vecy_{k}^i-\vecx_k)$, with $\tilde t_k^j\in (0,1)$
Now, recalling that $h_j$, $j=1,\ldots,p$ are continuously
differentiable functions, $\vecy_{k}^i=\vecx_k+t_k^i \hat
\alpha_k^i \vecd_k^i$, with $t_k^i\in (0,1)$,  and $\|\vecd_k^i\|=1$,
from~\eqref{eq_ap:in1_mu} and the fact that $\hat\alpha_k^i\leq
\alpha_k$ we can write
\begin{eqnarray*}
\left|\nu\left|\frac{h_j(\vecx_k)}{\rho_k}\right|^{\nu-1}-
\nu\left|\frac{h_j (\vecy_{k}^i)}{\rho_k}\right|^{\nu-1}\right|
&\leq&
\frac{\nu}{\rho_k^{\nu-1}} c_4 \left(t_k^i \hat\alpha_k^i \right)^{\nu-1} \\ &\leq& c_4\frac{\nu}{\rho_k^{\nu-1}}\left(t_k^i \alpha_k \right)^{\nu-1}\leq c_4\nu\left(\frac{\alpha_k}{\rho_k}\right)^{\nu-1}\\
&\leq& c_4\nu\left(\frac{\rho_k^{\beta-1}}{\theta_{\alpha}}\right)^{\nu-1} = c_4\nu\theta_{\alpha}^{1-\nu} \rho_k^{(\beta-1)(\nu-1)}.
\end{eqnarray*}
Given that $\beta > 1$, $\nu\in (1,2]$, and recalling Theorem \ref{convergence_eps}, it {can be concluded} that \eqref{eq_ap:lim_mu} holds.\\

Let us now prove the boundedness of the sequences $\set{\lambda_\ell(\vecx_k;\rho_k)}_{k\in\K_\rho^{\tt x}}$, $\ell=1,\ldots,m$ and $\set{\mu_j(\vecx_k;\rho_k)}_{k\in\K_\rho^{\tt x}}$, $j=1,\ldots,p$.

In fact,~\eqref{eq_ap:grad_Z_1} can be rewritten as
\begin{equation}
\label{eq_ap:derivative_Z_extended}
\begin{aligned}
& \left(\nabla f(\vecy_{k}^i)+\sum_{\ell=1}^m \nabla g_\ell(\vecy_{k}^i) \lambda_\ell(\vecx_k;\rho_k)+\right. \\
& +\sum_{\ell=1}^m \nabla g_\ell(\vecy_{k}^i) (\lambda_\ell(\vecy_{k}^i;\rho_k,)-\lambda_\ell(\vecx_k;\rho_k))+\sum_{j=1}^p \nabla h_j (\vecy_{k}^i) \mu_j(\vecx_k;\rho_k)+ \\
& \left.+\sum_{j=1}^p \nabla
h_j(\vecy_{k}^i)(\mu_j(\vecy_{k}^i;\rho_k)-\mu_j(\vecx_k;\rho_k))\right)^\top
{\vecd_k^i}
\geq-\frac{\xi(\hat{\alpha}_k^i)}{\hat{\alpha}_k^i}, \quad \forall
i \in \J_k \text{ and } k\in\K_\rho^{\tt x}.
\end{aligned}
\end{equation}
Let
\begin{eqnarray*}
&& \set{a_k^1,\ldots,a_k^m}=\set{\lambda_1(\vecx_k;\rho_k),\ldots,\lambda_m(\vecx_k;\rho_k)},\\
&& \set{a_k^{m+1},\ldots,a_k^{m+p}}=\set{\mu_1(\vecx_k;\rho_k),\ldots,\mu_p(\vecx_k;\rho_k)}.\\
\end{eqnarray*}
Assume, by contradiction, that there exists at least one index
$l\in\set{1,\ldots,m+p}$ such that
\begin{equation}
\label{index_q} \Disp\lim_{k\to+\infty \atop k\in\K_\rho^{\tt x}}
|a_k^l|=+\infty.
\end{equation}
Hence, the {sequences} $\{a_k^i\}$, $i=1,\dots,m+p$, cannot be all convergent to zero.
Then, from Lemma~\ref{lemma1_appendix}, there exists an infinite subset
$\K_\rho^{\tt x,a}\subseteq\K_\rho^{\tt x}$ and an index $s\in\set{1,\ldots,m+p}$ such
that,
\begin{equation}
\label{lim_ai_1} \Disp\lim_{k\to+\infty \atop k\in
\K_\rho^{\tt x,a}} \dfrac{a_k^i}{|a_k^s|}=z_i,\quad |z_i|<+\infty,\quad
i=1,\ldots,m+p
\end{equation}
If there is an unique index $l$ that satisfies~\eqref{index_q},
then $s=l$. If more than one index satisfies the
equation, then $s$ is selected as one of the indexes such that
$\set{a_k^s}_{k\in\K_\rho^{\tt x,a}}$ tends to $+\infty$ faster than the
others. Note also that
\begin{equation}\label{lim_ai_2}
%z_i\geq 0, \quad
%i\in\set{1,\ldots,m+p},\quad
z_s=1,\qquad\text{and}\qquad |a_k^s|\to+\infty.
\end{equation}

Dividing the relation~\eqref{eq_ap:derivative_Z_extended} by
$|a_k^s|$, it follows
\begin{equation}
\label{eq_ap:derivative_Z_extended_divided}
\begin{aligned}
& \left(\dfrac{\nabla f(\vecy_{k}^i)}{|a_k^s|}+\sum_{\ell=1}^m \dfrac{\nabla g_\ell(\vecy_{k}^i) a_k^\ell}{|a_k^s|}\right. \\
& +\sum_{\ell=1}^m \nabla g_\ell(\vecy_{k}^i)\dfrac{\lambda_\ell(\vecy_{k}^i;\rho_k)-\lambda_\ell(\vecx_k;\rho_k)}{|a_k^s|}+\sum_{j=1}^p \dfrac{\nabla h_j (\vecy_{k}^i) a_k^{m+j}}{|a_k^s|} \\
& \left.+\sum_{j=1}^p \nabla
h_j(\vecy_{k}^i)\dfrac{\mu_j(\vecy_{k}^i;\rho_k)-\mu_j(\vecx_k;\rho_k)}{|a_k^s|}\right)^\top
{\vecd_k^i}
\geq-\frac{\xi(\hat{\alpha}_k^i)}{\hat{\alpha}_k^i |a_k^s|}, \quad
\forall i\in \J_k \text{ and } k\in\K_\rho^{\tt x,a}.
\end{aligned}
\end{equation}

Reasoning as for~\eqref{eq:grad_Z}, by Proposition~\ref{prop:epsaccurate_estimate} a subsequence $\K_\rho^{\tt x,a,D}\subseteq\K_\rho^{\tt x,a}$ can be extracted such that for all $k\in\K_\rho^{\tt x,a,D}$ it holds $\J_k = \J^*$, $\vecd_k^i=\bar{\vecd}^i$ for all $i\in\J^*$, and $\Dd_k=\Dd^*=\set{\bar{\vecd}^i}_{i\in\J^*}$. Since $\lim_{k\in\K_\rho^{\tt x,a,D}}\vecx_k = \vecx^*$, by Assumption~\ref{condition:geometry_ass} and Proposition~\ref{prop:epsaccurate_estimate}, and using $\varepsilon\in(0,\min\set{\bar{\varepsilon},\varepsilon^*}]$, for sufficiently large $k\in\K_\rho^{\tt x,a,D}$ we have $\T_\X(\vecx^*)=\T_\X(\vecx_k,\varepsilon) =\cone(\Dd_k\cap \T_\X(\vecx_k,\varepsilon))=\cone(\Dd^*)$.

Taking the limit for $k\to+\infty$ and $k\in\K_\rho^{\tt x,a,D}$, and using~\eqref{eq_ap:lim_lambda},~\eqref{eq_ap:lim_mu}, and~\eqref{lim_ai_1}, we obtain
\begin{equation}
\label{eq_ap:lim_derivative_Z_divided} \left(\sum_{\ell=1}^m
z_\ell\nabla g_\ell(\vecx^*)+\sum_{j=1}^p z_{m+j} \nabla
h_j(\vecx^*)\right)^\top \bar{\vecd}^i\geq 0,\quad \forall \bar{\vecd}^i\in\Dd^*.
\end{equation}

Let us recall that $\vecx^*$ satisfies the MFCQ conditions. Let $\vecd$ be the
direction satisfying condition (b) of Definition~\ref{MFCQ}. For every $\vecd\in \T_\X(\vecx^*)$, there
exist nonnegative numbers $\beta_i$ such that
\begin{equation}
\label{eq_ap:direction} \vecd = \sum_{\bar{\vecd}^i\in \Dd^*} \beta_i \bar{\vecd}^i.
\end{equation}
Thus, from~\eqref{eq_ap:lim_derivative_Z_divided}
and~\eqref{eq_ap:direction}, we obtain
\begin{eqnarray}
\label{eq_ap:lim_derivative_Z_divided_2} \hspace{-0.5cm} &&
\left(\sum_{\ell=1}^m z_\ell\nabla g_\ell(\vecx^*)+\sum_{j=1}^p z_{m+j} \nabla
h_j(\vecx^*)\right)^\top \vecd= \\ &&= \sum_{\bar{\vecd}^i\in \Dd^*}
\beta_i\left(\sum_{\ell=1}^m z_\ell\nabla g_\ell(\vecx^*)+\sum_{j=1}^p z_{m+j}
\nabla h_j(\vecx^*)\right)^\top \bar{\vecd}^i \nonumber
\\&&= \sum_{\bar{\vecd}^i\in \Dd^*} \beta_i \sum_{\ell=1}^m z_\ell\nabla
g_\ell(\vecx^*)^\top \bar{\vecd}^i+\sum_{\bar{\vecd}^i\in \Dd^*} \beta_i \sum_{j=1}^p
z_{m+j} \nabla h_j(\vecx^*)^\top\bar{\vecd}^i  \geq 0.\nonumber
\end{eqnarray}
Considering Definition~\ref{MFCQ}, from~\eqref{eq_ap:lim_derivative_Z_divided_2} it follows
\begin{equation}
\label{eq_ap:lim_derivative_Z_divided_3} \sum_{\ell=1}^m z_\ell\nabla g_\ell(\vecx^*)^\top \vecd  \geq 0.
\end{equation}
Theorem~\ref{convergence_eps} and the definition of $z_\ell$ for $\ell\in\set{1,\ldots,m}$, guarantee \begin{equation}
\label{eq_ap:z_notinI}
z_\ell=0,\quad \text{ for all }\quad \ell\notin \I_+(\vecx^*).
\end{equation}
Since $\vecx^*$ satisfies the MFCQ conditions, \eqref{eq_ap:lim_derivative_Z_divided_3} implies
\begin{equation}
\label{eq_ap:z_inI}
z_\ell=0,\quad \text{ for all } \ell\in \I_+(\vecx^*).
\end{equation}
Applying~\eqref{eq_ap:z_notinI} and~\eqref{eq_ap:z_inI} to~\eqref{eq_ap:lim_derivative_Z_divided} we get
\begin{equation}
\label{eq_ap:last_eq}
\left(\sum_{j=1}^p z_{m+j} \nabla h_j(\vecx^*)\right)^\top {\bar{\vecd}^i}\geq 0,\quad \text{ for all }\quad {\bar{\vecd}^i}\in D^*,
\end{equation}
and again using Definition~\ref{MFCQ} and~\eqref{eq_ap:last_eq}, we obtain
\begin{equation}
\label{eq_ap:z_j}
z_{m+j}=0,\quad \text{ for all }\quad j\in\set{1,\ldots,p}.
\end{equation}
In conclusion, we get~\eqref{eq_ap:z_notinI},~\eqref{eq_ap:z_inI}, and~\eqref{eq_ap:z_j}, contradicting~\eqref{lim_ai_2} and this concludes the proof.
\end{proof}

%-----------------------------------
%\bibliographystyle{abbrv} 
%\bibliographystyle{plain} 
%\bibliography{refs}
{\small \printbibliography}

%==========================
%\bibliographystyle{abbrv}
%\bibliography{refs}
%==========================

\end{document}